\newtheorem{thm}{Theorem}[section]
\newtheorem{prob}{Problem}[section]
\newtheorem{lem}{Lemma}[section]
\newtheorem{conj}{Conjecture}[section]
\newtheorem{claim}{Claim}[section]
\newtheorem{definition}{Definition}[section]
\begin{document}
\title{Spectral extrema of $K_{s,t}$-minor free graphs--On a conjecture of M. Tait\footnote{Supported by the National Natural Science Foundation of China (Nos. 11971445, 11771141 and 12011530064).}}
\author{{\bf Mingqing Zhai$^{a}$}
, {\bf Huiqiu Lin$^{b}$}\thanks{Corresponding author. E-mail addresses: mqzhai@chzu.edu.cn
(M. Zhai); huiqiulin@126.com (H. Lin).}
\\
{\footnotesize $^a$ School of Mathematics and Finance, Chuzhou University, Chuzhou, Anhui 239012, China} \\
{\footnotesize $^b$ Department of Mathematics, East China University of Science}\\
{\footnotesize  and Technology, Shanghai 200237, China}}
\date{}

\date{}
\maketitle
{\flushleft\large\bf Abstract}
Minors play an important role in extremal graph theory and spectral extremal graph theory.
Tait [The Colin de Verdi\`{e}re parameter, excluded minors, and the spectral radius,
J. Combin. Theory Ser. A 166 (2019) 42--58] determined the maximum spectral radius and characterized the unique extremal graph
for $K_r$-minor free graphs of sufficiently large order $n$,
he also made great progress on
$K_{s,t}$-minor free graphs and posed a conjecture: Let $2\leq s\leq t$ and $n-s+1=pt+q$,
where $n$ is sufficiently large and $1\leq q\leq t.$
Then $K_{s-1}\nabla (pK_t\cup K_q)$ is the unique extremal graph with the maximum spectral radius
over all $n$-vertex $K_{s,t}$-minor free graphs.
In this paper, Tait's conjecture is completely solved.
We also determine the maximum spectral radius and its extremal graphs for $n$-vertex $K_{1,t}$-minor free graphs.
To prove our results, some spectral and structural tools, such as,
local edge maximality, local degree sequence majorization,
double eigenvectors transformation, are used to deduce structural properties of extremal graphs.
\begin{flushleft}
\textbf{Keywords:} $K_{s,t}$; minor; spectral radius; extremal graph; majorization
\end{flushleft}
\textbf{AMS Classification:} 05C50; 05C35

\section{Introduction}

~~~~Given a graph $H$, a graph is said to be \emph{$H$-free} if it does not contain $H$ as a subgraph.
The classic \emph{Tur\'{a}n's problem} asks what is the maximum size of an $H$-free graph of order $n$,
where the maximum size is known as the \emph{Tur\'{a}n number} of $H$ and denoted by $ex(n,H)$.
The study of Tur\'{a}n's problem can be dated back at least to Mantel \cite{MAN} in 1907,
who showed that $ex(n,K_3)\leq \lfloor n^2/4\rfloor$.
Mantel's theorem was extended by Tur\'{a}n's theorem in 1941 \cite{Turan}.
Since then, Tur\'{a}n's problem and many kinds of its variations have been paid much attention
and a considerable number of influential results in extremal graph theory have been obtained
(see for example, a survey, \cite{ZD2}).
In contrast, the spectral extremal problem asks: given a graph $H$,
what is the maximum spectral radius of an $H$-free graph of order $n$?
In the past decades much research has been done on spectral extremal graph theory,
see $K_r$ \cite{BV,WILF}, $K_{s,t}$ \cite{BABA,NIKI3}, $M_k$ \cite{Feng}, $C_{k,q}$ \cite{LP},
$P_k$ \cite{NIKI2}, $F_k$ \cite{CB}, $W_{2k+1}$ \cite{CB1}, $\bigcup_{i=1}^kS_{a_i}$ \cite{CLZ1},
$\bigcup_{i=1}^kP_{a_i}$ \cite{CLZ}, $C_4$ \cite{V1,ZHAI}, $C_6$ \cite{ZL},
consecutive cycles \cite{Hou,LB,V9,NP,ZL1} and a survey \cite{NIKI1}.

Given two graphs $H$ and $G$, $H$ is a \emph{minor} of $G$ if $H$ can be obtained from a
subgraph of $G$ by contracting edges. A graph is said to be \emph{H-minor free},
if it does not contain $H$ as a minor.
Let $A(G)$ be the adjacency matrix of $G$ and $\rho(G)$ be its spectral radius.
Recently, Nikiforov \cite{NIKI} and Tait \cite{Tait1,Tait}
studied the following spectral extremal problem.

\begin{prob}\label{pro3}
Given a graph $H$ or a family $\mathbb{H}$,
what is the maximum spectral radius of an $H$-minor ($\mathbb{H}$-minor) free graph of order $n$?
\end{prob}

Problem \ref{pro3} was initially paid attention in 1990.
Cvetkovi\'{c} and Rowlinson \cite{CP} conjectured
that $\rho(G)\leq \rho(K_1\nabla P_{n-1})$ for any outerplanar graph $G$
with equality if and only if $G\cong K_1\nabla P_{n-1}$.
Boots and Royle \cite{BOOT} and independently Cao and Vince \cite{CAO} conjectured
that $\rho(G)\leq \rho(K_2\nabla P_{n-2})$ for any planar graph $G$ of order $n\geq9$
with equality if and only if $G\cong K_2\nabla P_{n-2}$.
Subsequently, many scholars contributed to these two conjectures
(see \cite{CAO,Hong1,Hong2,SHU}).
Ellingham and Zha \cite{EZ} showed that $\rho(G)\leq2+\sqrt{2n-6}$ for a planar graph $G$.
Dvo\v{r}\'{a}k and Mohar \cite{DM} proved that $\rho(G)\leq \sqrt{8\Delta-16}+3.47$
for a planar graph $G$ with maximum degree $\Delta$.
In 2017, Tait and Tobin \cite{Tait1} confirmed these two old conjectures for sufficiently large $n$.
Recently, Lin and Ning \cite{LIN} confirmed Cvetkovi\'{c}-Rowlinson conjecture completely.
In 2004, Hong \cite{Hong3}
proved that $K_3\nabla (n-3)K_1$ uniquely attains the maximum spectral radius over all $K_5$-minor free graphs.
Tait \cite{Tait} extended Hong's result to $K_r$-minor free graphs
by showing the unique extremal graph is $K_{r-2}\nabla (n-r+2)K_1$.
In 2017, Nikiforov \cite{NIKI} contributed to $K_{2,t}$-minor free graphs,
and the result was extended to $K_{s,t}$-minor free graphs by Tait as shown in the following theorem.

\begin{thm}\label{thm2}\cite{Tait}
Let $2\leq s\leq t$, $n$ be large enough and $G$ be an $n$-vertex $K_{s,t}$-minor free graph.
Then $$\rho(G)\leq\frac12\left(s+t-3+\sqrt{(s+t-3)^2+4(s-1)(n-s+1)-4(s-2)(t-1)}\right),$$
with equality if and only if $t \mid n-s+1$ and $G\cong K_{s-1}\nabla\frac{n-s+1}t K_t$.
\end{thm}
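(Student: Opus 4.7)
The plan is to prove the theorem in two directions: show that $G^* := K_{s-1}\nabla pK_t$ with $p = (n-s+1)/t$ realizes the claimed value, and conversely that any $K_{s,t}$-minor-free $G$ with $\rho(G) \ge \rho(G^*)$ must be isomorphic to $G^*$ (which in particular forces $t \mid n-s+1$). For the lower bound, I would first verify that $G^*$ is $K_{s,t}$-minor free by direct case analysis: in any hypothetical $K_{s,t}$-model, the $s-1$ apex vertices (each universal in $G^*$) can be placed as singletons on either the $L$- or the $R$-side, and the remaining $(k+1) + (t-k) = t+1$ branches must fit as disjoint connected subgraphs inside $pK_t$; but each connected subgraph of $pK_t$ is contained in a single clique of size $t$, forcing a contradiction. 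The spectral radius of $G^*$ then follows from the equitable partition into apex vs.\ non-apex vertices, whose quotient matrix
\[
B \;=\; \begin{pmatrix} s-2 & n-s+1 \\ s-1 & t-1 \end{pmatrix}
\]
has dominant eigenvalue equal to the expression in the theorem.

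For the upper bound, let $G$ be $K_{s,t}$-minor free with $\rho(G) \ge \rho(G^*)$ and $x$ its Perron eigenvector, normalized so that $\max_v x_v = 1$. A Kostochka--Prince-type edge upper bound for $K_{s,t}$-minor-free graphs gives $e(G) \le \tfrac12(s+t-3)n + O(1)$, whence $\rho(G) = (1+o(1))\sqrt{(s-1)n}$. Combining this with the spectral gap of $B$ via the Perron equation, I would identify a subset $U \subseteq V(G)$ of size exactly $s-1$ on which $x_u = 1 - o(1)$, while $x_v = O(1/\sqrt{n})$ for $v \notin U$. The critical structural step is then to prove each $u \in U$ is universal: if some $u \in U$ misses a vertex $w$, adding the edge $uw$ strictly increases $\rho$ by the eigenvector inequality; one must verify that the augmented graph remains $K_{s,t}$-minor free, contradicting the extremality of $G$.

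Once $U$ is universal, write $G = K_{s-1}\nabla H$ with $H := G - U$. Since $U$ is universal, any $K_{s,t}$-model in $G$ can be normalized to place $k$ apex vertices as singletons in $R$-branches and $s-1-k$ in $L$-branches, and the remaining branches then form a $K_{k+1,t-k}$-model in $H$. Hence $K_{s,t}$-minor-freeness of $G$ is equivalent to $H$ avoiding $K_{k+1,t-k}$ as a minor for every $0 \le k \le s-1$. Among $H$ on $n-s+1$ vertices satisfying this entire family of constraints, a local majorization combined with a double-eigenvector-transformation argument should identify $pK_t$ as the unique maximizer of $\rho(K_{s-1}\nabla H)$; this simultaneously forces $t \mid n-s+1$ and $G \cong G^*$.

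The main obstacle is the universality step in the second paragraph. The eigenvector inequality is a purely local criterion for when adding an edge increases $\rho$, but showing that no $K_{s,t}$-minor is thereby created requires global structural control of $V(G) \setminus U$. Handling this likely needs either a preliminary weak stability step (showing $G$ is already close to $G^*$ in edit distance) or a careful minor-extraction argument that uses the eigenvector-weight dichotomy to produce $t$ disjoint connected subgraphs inside $N(w)$. A secondary difficulty lies in the final uniqueness step: the naive bound $\Delta(H) \le t-1$ from $K_{1,t}$-minor freeness alone is insufficient, as it does not by itself rule out $(t-1)$-regular $K_{1,t}$-minor-free graphs other than $pK_t$; one genuinely needs the additional constraints $K_{k+1,t-k}$ for $k \ge 1$ to pin down $pK_t$ uniquely.
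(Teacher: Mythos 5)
Theorem~\ref{thm2} is quoted from Tait \cite{Tait}; the present paper does not reprove it, and in fact imports its pivotal ingredient --- that the spectral extremal graph contains a dominating $(s-1)$-clique $K$ --- as Lemma~\ref{lem2.1}. Your outline matches the broad architecture of Tait's argument (extract a universal apex set, then pin down the remainder), not anything established in this paper; the local edge maximality, degree-sequence majorization and double eigenvector machinery of Section~2 is built on top of that black box and is aimed at the non-divisible case $t\nmid n-s+1$, which Theorem~\ref{thm2} does not cover.

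The genuine gap is exactly where you flag it, and the repair you sketch cannot work as written. The spectral extremal graph $G^\star$ is automatically \emph{edge-maximal} among $K_{s,t}$-minor-free graphs of order $n$: for any missing pair $uw$ (with both endpoints in the dominant component, which is the only relevant case here), adding $uw$ strictly increases $\rho$, so if $G^\star+uw$ remained $K_{s,t}$-minor free that would already contradict extremality. Hence for \emph{every} missing edge at a high-weight vertex the augmented graph acquires a $K_{s,t}$-minor, and the intermediate step ``verify that the augmented graph remains $K_{s,t}$-minor free'' is not something one can ever establish --- it is false by construction. Tait's actual proof does not perform a single-edge flip; it extracts the apex set from the global distribution of Perron weights combined with a degeneracy/edge bound and a delicate analysis of where a hypothetical minor would have to live, and this is the bulk of his paper. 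Two secondary slips: the edge bound you invoke, $e(G)\le\tfrac12(s+t-3)n+O(1)$, is below the truth --- the extremal graph $K_{s-1}\nabla pK_t$ itself already has about $\tfrac12(2s+t-3)n$ edges --- so the claimed asymptotic $\rho(G)=(1+o(1))\sqrt{(s-1)n}$ does not follow from it as stated; and framing the whole argument as ``any $G$ with $\rho(G)\ge\rho(G^*)$'' is vacuous when $t\nmid n-s+1$, since then $G^*$ does not exist, whereas the theorem still asserts the numerical inequality, so the comparison must be made against the Perron root of the quotient matrix $B$ rather than against a graph.
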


It should be noted that,
if $t\nmid n-s+1$ then the maximum spectral radius together
with its extremal graph is still unknown for $K_{s,t}$-minor free graphs.
To this end, Tait posed the following conjecture.

\begin{conj}\label{conj1}\cite{Tait}
Let $2\leq s\leq t$, $n$ be large enough and $n-s+1=pt+q$, where $1\leq q\leq t$.
Then, the maximum spectral radius of $n$-vertex $K_{s,t}$-minor free graphs
is attained by the join of $K_{s-1}$ with $p$ copies of $K_t$ and a copy of $K_q$.
\end{conj}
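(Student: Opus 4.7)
Let $G^*$ be an $n$-vertex $K_{s,t}$-minor free graph attaining the maximum spectral radius $\rho^*:=\rho(G^*)$, and let $\mathbf{x}$ be its positive Perron eigenvector, normalized so that $\max_v x_v=1$. Since $K_{s-1}\nabla(pK_t\cup K_q)$ is itself $K_{s,t}$-minor free, the equitable partition of its vertex set into three orbits (the $K_{s-1}$-part, the $pK_t$-part, and the $K_q$-part) yields an explicit cubic whose largest root gives the lower bound $\rho^*\geq\rho_0:=\rho(K_{s-1}\nabla(pK_t\cup K_q))=\sqrt{(s-1)(n-s+1)}+O(1)$. Combining the eigenvector equation $\rho^*x_v=\sum_{u\sim v}x_u$ with a linear edge bound for $K_{s,t}$-minor free graphs, I expect to deduce that all but exactly $s-1$ coordinates of $\mathbf{x}$ are $O(n^{-1/2})$; denote the distinguished ``hub'' set by $S$ with $|S|=s-1$.

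Next I would establish $G^*=K_{s-1}\nabla H$ for some $H$ on $m:=n-s+1$ vertices by invoking \emph{local edge maximality}. If $u\in S$, $v\notin S$, and $uv\notin E(G^*)$, consider the swap $G'=G^*-vw+uv$ for an edge $vw$ with $w\notin S$ chosen so that $x_w$ is small. Because $x_u$ is close to $1$ while $x_w=O(n^{-1/2})$, the Rayleigh quotient strictly increases; meanwhile, \emph{local degree sequence majorization} (careful selection of $w$ guided by $v$'s neighborhood) guarantees that no new $K_{s,t}$-minor appears, contradicting the extremality of $G^*$. Once $G^*=K_{s-1}\nabla H$, the $K_{s,t}$-minor-freeness of $G^*$ forces $H$ to be $K_{1,t}$-minor free (any $K_{1,t}$-minor in $H$ combines with the $s-1$ singleton branch sets from $S$ to give a $K_{s,t}$-minor in $G^*$), and more generally $H$ avoids each $K_{1+k,t-k}$-minor for $1\leq k\leq s-1$.

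The decisive step is to maximize $\rho(K_{s-1}\nabla H)$ over $m$-vertex graphs $H$ satisfying these minor constraints. Writing $f_i(\rho):=\mathbf{1}^T(\rho I-A(C_i))^{-1}\mathbf{1}$ for each component $C_i$ of $H$, the eigenvalue equation reduces to $(\rho-s+2)/(s-1)=\sum_i f_i(\rho)$, which is monotone increasing in each $f_i$. Two reductions should then force $H\cong pK_t\cup K_q$. First, if some component $C$ of $H$ has $c>t$ vertices, replace it by $\lfloor c/t\rfloor K_t\cup K_{c\bmod t}$; using the companion result on $K_{1,t}$-minor free graphs together with the \emph{double eigenvectors transformation}, one shows that $\sum_i f_i$ strictly increases. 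Second, once every component of $H$ has size $\leq t$, each component can be completed to a clique (as $K_c$ is $K_{1,t}$-minor free for $c\leq t$), and the convexity of $c\mapsto c/(\rho-c+1)$ implies that merging two cliques of total size $\leq t$ strictly increases $\sum_i f_i$; iterating yields the optimal partition $t+\cdots+t+q$ with $p$ parts of size $t$ and one part of size $q$.

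The main obstacle is the first reduction. A plain walk count shows $f_{C_c}(\rho)=c/(\rho-2)$ for a cycle on $c$ vertices, which actually \emph{exceeds} the corresponding clique-packing value, so a long cycle would beat $pK_t\cup K_q$ if only the $K_{1,t}$-minor constraint were in play. The proof must therefore genuinely use the additional constraints (no $K_{2,t-1}$-minor, etc.) to rule out cycles and other densely connected large $K_{1,t}$-minor free structures, leaving only tree-like candidates whose $f$-values are provably smaller. Making this uniform across all $c>t$ and all allowed component structures is where the double eigenvectors transformation does the essential work.
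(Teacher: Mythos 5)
Your proposal sets out to \emph{prove} the conjecture, but the conjecture as stated is actually false in several regimes, and the paper's Theorem~\ref{thm3} resolves it by replacing $K_{s-1}\nabla(pK_t\cup K_q)$ with a different extremal graph whenever $\beta=\lfloor\frac{t+1}{s+1}\rfloor\geq 2$ and $q\leq 2(\beta-1)$ (and in the sporadic case $\beta=1$, $q=2$, $t=8$). Concretely, when $\beta\geq 2$ the graph $\overline{H_{s,t}}$ (the complement of a star forest on $t+1$ vertices) is $K_{1,t}$-minor free, satisfies every $K_{a,b}$-minor constraint with $a+b=t+1$, $a\leq s$, and has $\binom{t}{2}+\beta-1>\binom{t}{2}=e(K_t\cup K_1)$ edges; thus a $(t+1)$-vertex component of the form $\overline{H_{s,t}}$ beats $K_t\cup K_1$, and similar non-clique pieces ($S^1(\overline{H_{s,t}})$, $\overline{H^\star}$) beat clique packings on $t+2$ vertices. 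Your final reduction step — ``each component can be completed to a clique\ldots and convexity forces the $t+\cdots+t+q$ partition'' — is therefore invalid: it only establishes optimality \emph{among unions of cliques}, whereas the true optimum can live outside that class. This is not a small technical gap that a cleverer walk-count could patch; it is a counterexample to the conclusion you are trying to reach.

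Beyond that, two of the intermediate claims need adjustment. First, your asymptotic claim that $f_{C_c}(\rho)=c/(\rho-2)$ ``exceeds the corresponding clique-packing value'' is wrong for $t\geq 4$: writing the clique packing value as $\frac{pt}{\rho-t+1}+\frac{q}{\rho-q+1}$ and expanding to second order in $1/\rho$, the clique packing dominates because the denominator $\rho-t+1<\rho-2$. So cycles are not the troublesome competitor; the actual competitors are the dense $(t+1)$- and $(t+2)$-vertex structures described above. Second, once you establish $G^\star=K_{s-1}\nabla H$, the correct structural constraint on $H$ is not merely $K_{1,t}$-minor-freeness plus ``more generally $K_{1+k,t-k}$-minor-free for $1\leq k\leq s-1$''; the paper formalizes this as the $(s,t)$-property (Lemma~\ref{lem2.1} and the paragraph after it), and the entire content of Section~3 is devoted to determining exactly which components these constraints admit — that analysis, using local edge maximality (Lemma~\ref{lem2.4}), local degree majorization (Lemma~\ref{lem2.8}), and the double-eigenvector inequality (Lemma~\ref{lem2.5}), is what produces $\overline{H_{s,t}}$, $S^1(\overline{H_{s,t}})$ and $\overline{H^\star}$ as candidates and ultimately shows the conjectured form is only correct ``otherwise.'' Your proposal correctly identifies the right toolkit, but because it assumes the conclusion, it cannot be turned into a proof. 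To fix it you would need to (a) drop the assumption that components are clique unions, (b) characterize all admissible components of order $t,t+1,t+2$ under the $(s,t)$-property, and (c) accept that the answer to the conjecture is sometimes ``no,'' producing the case analysis of Theorem~\ref{thm3} rather than a uniform affirmative.
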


Up to now, Conjecture \ref{conj1} has been confirmed for $s+t=4$ \cite{V1,ZHAI}; $s+t=5$ \cite{NIKI};
$s+t=6$ \cite{Wang}; and $q=t$ (see Theorem \ref{thm2}).
For a graph $G$, let $\overline{G}$ be its complement and $S^{k}(G)$ be a graph obtained by subdividing $k$ times of
an edge $uv$ with the minimum degree sum $d_G(u)+d_G(v)$.
Let $H^\star$ be the Petersen graph, and $H_{s,t}$ be a star forest of order $t+1$, precisely,
the disjoint union of $\lfloor\frac{t+1}{s+1}\rfloor$ stars
in which all but at most one are isomorphic to $K_{1,s}$.
In this paper, Conjecture \ref{conj1} is solved.

\begin{thm}\label{thm3}
Let $2\leq s\leq t$, $n-s+1=pt+q$ and $\beta=\lfloor\frac{t+1}{s+1}\rfloor$, where $n$ is large enough and $1\leq q\leq t$.
Let $G^\star$ attain the maximum spectral radius over all $n$-vertex $K_{s,t}$-minor free graphs.
Then
$$ G^\star\cong \left\{
\begin{aligned}
   &K_{s-1}\nabla \left((p-1)K_t \cup \overline{H^\star}\right) &&\hbox{if $q=2$, $t=8$ and $\beta=1$}; \\
   &K_{s-1}\nabla \left((p-1)K_t \cup S^1\left(\overline{H_{s,t}}\right)\right) &&\hbox{if $q=\beta=2$}; \\
   &K_{s-1}\nabla \left((p-q)K_t \cup q\overline{H_{s,t}}\right) &&\hbox{if $q\leq 2(\beta-1)$ except $q=\beta=2$}; \\
   &K_{s-1}\nabla \left(pK_t \cup K_q\right) &&\hbox{otherwise}.
\end{aligned}
\right.
$$
\end{thm}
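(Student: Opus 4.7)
The plan is to extract a dominating apex clique via Perron analysis, pin down the admissible pieces in the remainder by structural and edge-maximality arguments, and then compare the surviving configurations spectrally. Let $\mathbf{x}$ be the Perron eigenvector of $G^\star$ and $\rho=\rho(G^\star)$. Evaluating the Rayleigh quotient on any of the four candidate graphs listed in the theorem gives a lower bound with $\rho^2 = (s-1)n + O(1)$. From this and the Perron equation $\rho\mathbf{x}_u=\sum_{v\sim u}\mathbf{x}_v$, one identifies $s-1$ vertices whose $\mathbf{x}$-entries dominate the rest by a factor of order $\sqrt{n}$; a standard clean-up then shows these vertices form a clique $S\cong K_{s-1}$ that is joined to every other vertex, so $G^\star = K_{s-1}\nabla H$ for some $H$ on $m:=n-s+1$ vertices.

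I then translate the $K_{s,t}$-minor-freeness of $G^\star$ into constraints on $H$: by distributing the apex vertices between the two sides of a prospective $K_{s,t}$-minor as $k_1+k_2\le s-1$, one sees that $H$ must avoid every $K_{s-k_1,t-k_2}$ minor, and in particular $H$ is $K_{1,t}$-minor free. Combining this with local edge-maximality (Perron--Frobenius forces any missing edge of $G^\star$ whose insertion preserves the minor-freeness to actually be present) pins down components of order $\le t$ to be cliques $K_{|C|}$, components of order exactly $t+1$ to be $\overline{H_{s,t}}$ (the extra constraint that every component of the complement have order $\ge s+1$ is what produces the star-forest complement), and the only admissible components of order $t+2$ to be $S^1(\overline{H_{s,t}})$ or $\overline{H^\star}$. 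I then invoke the double-eigenvector transformation of Kelmans type together with local degree-sequence majorization to show that at most one component of $H$ is not a $K_t$, leaving only the four configurations listed in Theorem~\ref{thm3}. The remaining step is a spectral comparison of these configurations via the equitable partition induced by the apex/component structure, carried out to constant order in the large-$n$ expansion.

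The main obstacle will lie in the component analysis of Step~2 and in the final spectral comparison. Ruling out every component of order $t+2$ other than $S^1(\overline{H_{s,t}})$ and $\overline{H^\star}$ requires a delicate case analysis driven by edge-maximality, and the Petersen graph enters exactly because $\overline{H^\star}$ is the unique graph of order $10$ attaining a certain spectral profile in the regime $t=8$, $\beta=1$. The threshold $q\le 2(\beta-1)$ in the final case split emerges only after expanding each candidate spectral radius to constant-order precision: the sign of the gap between $\rho(K_{s-1}\nabla(pK_t\cup K_q))$ and $\rho(K_{s-1}\nabla((p-q)K_t\cup q\overline{H_{s,t}}))$ flips at this threshold, while the exceptional case $q=\beta=2$ arises because $S^1(\overline{H_{s,t}})$ beats its unsubdivided competitor only under this precise symmetry of the underlying star forest.
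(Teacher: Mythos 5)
Your high-level architecture---extract a dominating apex clique $K_{s-1}$, reduce to characterizing the remainder $H=G^\star-K$ subject to minor-freeness, then compare residual configurations via Perron-vector perturbation---is in the same spirit as the paper (which, incidentally, cites Tait for the apex step rather than reproving it, and encodes the constraints on $H$ as the ``$(s,t)$-property'': $H$ must be $K_{a,b}$-minor free for every $a+b=t+1$ with $1\le a\le\min\{s,\lfloor(t+1)/2\rfloor\}$; your ``$K_{s-k_1,t-k_2}$ with $k_1+k_2\le s-1$'' is equivalent but redundant). However, two central steps of your plan do not survive.

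First, the assertion that degree-sequence majorization and a Kelmans-type argument show ``at most one component of $H$ is not a $K_t$'' is simply false, and in fact contradicts the very conclusion to be proved: in the regime $q\le 2(\beta-1)$ with $q\ge 2$ (so $\beta\ge 2$) the extremal graph has exactly $q$ non-$K_t$ components, each isomorphic to $\overline{H_{s,t}}$. What is actually true is much weaker: at most one component has order in $\{1,\dots,t-1\}\cup\{t+2\}$ (the paper's Lemma \ref{lem4.1}), none has order $>t+2$ (Theorem \ref{thm3.3}), and there can be up to $2\beta-2$ components of order $t+1$ (Lemma \ref{lem4.2}). Your stated bound leaves no room for the third case of the theorem, so the argument as written cannot reach the statement.

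Second, your final ``spectral comparison of the surviving configurations via equitable partitions, carried out to constant-order precision'' cannot decide the case split. All surviving candidates share the same vertex set, the apex join dominates, and their spectral radii agree through every term larger than $\Theta(1/n)$; a constant-order expansion sees nothing. Worse, the threshold defies a first-order edge-count comparison at one point: the numbers of edges of $pK_t\cup K_q$ and of $(p-q)K_t\cup q\overline{H_{s,t}}$ are $p\binom{t}{2}+\binom{q}{2}$ and $p\binom{t}{2}+q(\beta-1)$ respectively, which coincide exactly at $q=2\beta-1$ (the first value that falls under ``otherwise''). There the leading term of the Perron perturbation vanishes and one must pass to degree sequences. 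This is precisely why the paper never computes the candidates' spectral radii: it replaces that computation by the combinatorial proxies of local edge-maximality (Lemma \ref{lem2.4}), then local degree-sequence majorization (Lemma \ref{lem2.8}), and, on surviving ties, a hand-tailored Perron swap (as in the end of the proof of Theorem \ref{thm3.3}). Without developing tools of this type, a generic numerical expansion to ``constant order'' is not merely insufficiently precise---it is a structurally wrong approach to the tie-breaking that produces the threshold $q\le 2(\beta-1)$ and the exceptional case $q=\beta=2$.
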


It remains $K_{1,t}$-minor free graphs.
Let us first consider connected case. A nice result, due to
Ding, Johnson and Seymour \cite{DJS},
determined the maximum size and constructed its extremal graphs for connected $K_{1,t}$-minor free graphs.
However, it seems difficult to completely characterize the extremal graphs.
Inspired by Ding, Johnson and Seymour, we obtain the following spectral extremal result.

\begin{thm}\label{thm4}
Let $t\geq 3$, and $G^*$ attain
the maximum spectral radius over all $n$-vertex connected $K_{1,t}$-minor free graphs.
Then
$$ G^*\cong \left\{
\begin{aligned}
&\overline{H_{1,t}} &&\hbox{if $n=t+1$}; \\
&  S^{n-t}(K_t) &&\hbox{if $n\geq t+2$.}
\end{aligned}
\right.$$
\end{thm}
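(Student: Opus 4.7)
The plan is to treat $n=t+1$ and $n\geq t+2$ separately, leveraging the clean equivalence that $G$ is $K_{1,t}$-minor-free if and only if every connected proper subset $B\subsetneq V(G)$ satisfies $|N_G(B)|\leq t-1$. Applied to $B=\{v\}$ this immediately gives $\Delta(G^*)\leq t-1$ and hence $\rho(G^*)\leq t-1$. Comparing with the explicit competitors $\overline{H_{1,t}}$ (for $n=t+1$) or $S^{n-t}(K_t)$ (for $n\geq t+2$) yields a lower bound $\rho(G^*)>t-2$, so $G^*$ must carry at least one vertex of degree exactly $t-1$. By Perron--Frobenius, adding any edge to the connected graph $G^*$ would strictly increase $\rho$, so $G^*$ must in addition be edge-maximal among connected $K_{1,t}$-minor-free graphs.

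For $n=t+1$, any $K_{1,t}$-minor must occupy all $t+1$ vertices and hence each branch set is a singleton, reducing the minor to a subgraph; thus $K_{1,t}$-minor-free becomes $\Delta\leq t-1$. The inequality $\rho(G^*)\leq\Delta(G^*)\leq t-1$ is tight iff $G^*$ is $(t-1)$-regular and connected on $t+1$ vertices, i.e., the complement of a perfect matching. When $t$ is odd this graph exists and coincides with $\overline{H_{1,t}}$. When $t$ is even no such regular graph exists; a maximizer of $\rho$ must then have degree sequence $(t-1,\ldots,t-1,t-2)$ with exactly $(t^2-2)/2$ edges, and a Havel--Hakimi reconstruction shows the graph is uniquely determined up to isomorphism as $\overline{H_{1,t}}=\overline{K_{1,2}\cup\frac{t-2}{2}K_2}$.

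For $n\geq t+2$, I would first locate a $K_{t-1}$-subgraph $H$ in $G^*$: combining a Ding--Johnson--Seymour-type edge bound for connected $K_{1,t}$-minor-free graphs of order $n$ with the lower bound $\rho(G^*)\geq \rho(S^{n-t}(K_t))$ forces $G^*$ to be dense enough to contain such an $H$. Then the neighborhood bound $|N(V(H))|\leq t-1$, together with edge-maximality and $\Delta\leq t-1$, forces $H$ to sit inside a $K_t$-minus-an-edge configuration, with the two non-adjacent vertices $u,v$ being the only attachment points for the external $n-t$ vertices. Iterated applications of the neighborhood characterization to connected supersets of $H$ meeting the external portion then push those $n-t$ vertices into a single subdivided path from $u$ to $v$, since any branching or multi-attachment would produce a connected $B$ with $|N(B)|\geq t$. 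Hence $G^*$ is obtained from $K_t$ by subdividing edges a total of $n-t$ times.

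The final reduction to a single subdivided edge is achieved via a double-eigenvector transformation of Kelmans / Hoffman--Smith type: given two subdivided paths of lengths $\ell_1\geq\ell_2\geq 1$ on distinct edges of the core $K_t$, transferring one subdivision vertex from the shorter to the longer path strictly increases the Rayleigh quotient against the Perron eigenvector of the original graph, and therefore strictly increases $\rho$. Iterating concentrates all subdivisions on one edge, giving $G^*\cong S^{n-t}(K_t)$. The main obstacle is the structural argument in the $n\geq t+2$ case: even with the neighborhood characterization and the size bound in hand, one must iteratively eliminate a zoo of competing edge-maximal configurations (pendant clusters off the core, branching pendant attachments, two overlapping $K_{t-1}$'s, etc.) before the core-plus-single-path shape emerges. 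The subsequent spectral swap is conceptually standard but demands careful control of the Perron eigenvector entries along long paths, where the entries decay geometrically and the strict-inequality bookkeeping is delicate.
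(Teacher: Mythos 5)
You reproduce the paper's skeleton — $\Delta(G^*)\leq t-1$, edge-maximality, $\rho(G^*)>t-2$, and the split between $n=t+1$ and $n\geq t+2$ — and the characterization of $K_{1,t}$-minor-freeness via connected $B$ with $|N_G(B)|\leq t-1$ is correct. But in both regimes there are genuine gaps that the paper's argument fills and your sketch does not.

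In the $n=t+1$, $t$ even case, you assert without proof that the maximizer ``must have degree sequence $(t-1,\ldots,t-1,t-2)$.'' Edge-maximality only forces the low-degree vertices to form a clique; it does not force the degree defect down to a single vertex of degree $t-2$. There are other connected, edge-maximal graphs on $t+1$ vertices with $\Delta\leq t-1$ — for instance one whose complement is $K_{1,t-2}\cup K_2$, a single vertex of degree $2$ — and ``more edges implies larger $\rho$'' is not a theorem. Ruling these out is exactly where the paper's Theorem \ref{t002} does work: it shows $N(w)=A_1$, that $G^*[A_1]$ is a complete graph minus a perfect matching (so $|A_1|$ is even), derives the cubic $\rho^3-(t-3)\rho^2-(2t-2)\rho+|A_0||A_1|=0$, and observes that maximizing $\rho$ means minimizing $|A_0||A_1|$, which forces $|A_1|=t-2$. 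That optimization step has no counterpart in your proposal.

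For $n\geq t+2$, two structural steps are unsupported. First, the existence of a $K_{t-1}$ subgraph does not follow from the Ding--Johnson--Seymour edge bound together with $\rho>t-2$; a graph with bounded $\Delta$ and large $\rho$ need not contain a large clique, and the paper never finds one up front — $N(u^*)$ is only shown to be near-complete after the analysis of $A_0,A_1,A_{10},A_{11}$. Second, and more seriously, the claim that ``any branching or multi-attachment would produce a connected $B$ with $|N(B)|\geq t$'' is not correct as stated. For example, take $K_{t-1}$, hang pendant paths off two \emph{adjacent} clique vertices $v_1,v_2$, and join the two path-ends by an edge, so the external vertices form a single path from $v_1$ to $v_2$ while $v_1v_2$ is still a clique edge. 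Checking the connected-$B$ test gives $|N(B)|\leq t-1$ in every case; this graph is $K_{1,t}$-minor-free, is not $S^{n-t}(K_t)$, and has $t-3$ fewer edges. It can only be eliminated spectrally, which is precisely the role of Claims \ref{cl003}--\ref{cl005} and the swap Lemma \ref{le000} in the paper. Your Kelmans/Hoffman--Smith transfer in the final step is therefore being invoked before the core-plus-single-subdivided-path shape has been established; in the paper that shape is an output of the eigenvector comparison, not an input obtained from the minor characterization alone.
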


By the connected case in Theorem \ref{thm4}, we further solve general case.

\begin{thm}\label{thm5}
Let $n\geq t\geq 1$, and $G$ be an $n$-vertex $K_{1,t}$-minor free graph.
Then $\rho(G)\leq t-1$, with equality if and only if
$G$ contains a component either isomorphic to $K_t$,
or isomorphic to $K_{t+1}$ by deleting $\frac {t+1}2$ independent edges.
\end{thm}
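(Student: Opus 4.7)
The plan is to reduce Theorem \ref{thm5} to the connected case via the identity $\rho(G) = \max_i \rho(G_i)$ over the connected components $G_i$ of $G$, and then to apply Theorem \ref{thm4} componentwise. For $t \in \{1,2\}$ the statement is immediate: a $K_{1,1}$-minor free graph is edgeless, and a $K_{1,2}$-minor free graph is a disjoint union of edges and isolated vertices, so I would assume $t \ge 3$ henceforth and invoke Theorem \ref{thm4}.

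For a connected $K_{1,t}$-minor free component $G_i$ on $n_i$ vertices, I would split into three ranges. If $n_i \le t$, the trivial bound $\rho(G_i) \le n_i - 1 \le t-1$ holds, with equality only when $n_i = t$ and $G_i \cong K_t$. If $n_i = t+1$, Theorem \ref{thm4} gives $\rho(G_i) \le \rho(\overline{H_{1,t}})$, and I would then evaluate $\rho(\overline{H_{1,t}})$ according to the parity of $t$. When $t$ is odd, $H_{1,t}$ is a perfect matching on $t+1$ vertices, so $\overline{H_{1,t}}$ is the cocktail-party graph, which is $(t-1)$-regular; hence $\rho(\overline{H_{1,t}}) = t-1$. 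When $t$ is even, $H_{1,t}$ must contain a $K_{1,2}$ component, so $\overline{H_{1,t}}$ is connected with $\Delta = t-1$ but not regular (the former $K_{1,2}$-center has degree $t-2$), forcing $\rho < t-1$ by the standard bound $\rho \le \Delta$ with equality only for regular connected graphs. If $n_i \ge t+2$, Theorem \ref{thm4} gives $\rho(G_i) \le \rho(S^{n_i - t}(K_t))$; this graph is connected with $\Delta = t-1$ but contains at least one subdivision vertex of degree $2$, hence is not regular and therefore satisfies $\rho < t-1$.

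Combining the three ranges yields $\rho(G) \le t-1$, with equality if and only if some component $G_i$ attains $\rho(G_i) = t-1$. By the case analysis this happens exactly when $G_i \cong K_t$, or when $t$ is odd and $G_i \cong \overline{H_{1,t}}$; in the latter case $\overline{H_{1,t}}$ is precisely $K_{t+1}$ minus a perfect matching of $\frac{t+1}{2}$ independent edges, matching the characterization in the theorem. The real technical content has already been packaged into Theorem \ref{thm4}, so the main obstacle here is conceptually mild: it is the parity split at $n_i = t+1$, which I expect to handle cleanly by the ``$\rho \le \Delta$ strict for irregular connected graphs'' principle together with the explicit description of $H_{1,t}$ in the two parity cases.
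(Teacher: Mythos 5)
Your proof follows exactly the route the paper uses: componentwise reduction, Theorem~\ref{thm4}, and the Perron--Frobenius bound $\rho\le\Delta$ for connected graphs with equality only in the regular case. For $t\ge 4$ all three of your ranges check out, including the parity split at $n_i=t+1$.

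The gap is at $t=3$. For $n_i\ge t+2$ you assert that $S^{n_i-t}(K_t)$ is irregular because its subdivision vertices have degree $2$. When $t=3$ this is false: $S^{n_i-3}(K_3)$ is the cycle $C_{n_i}$, which \emph{is} $2$-regular, since $\Delta=t-1=2$ coincides with the degree of the subdivision vertices. Hence $\rho(C_{n_i})=2=t-1$ for every $n_i\ge 3$, and so, for instance, $G=C_5$ is a connected $K_{1,3}$-minor free graph with $\rho(G)=t-1$ whose sole component is neither $K_3$ nor $K_4$ minus two independent edges (that graph is $C_4$). This is not merely a flaw in your write-up: it shows the ``only if'' direction of Theorem~\ref{thm5} is incomplete as literally stated for $t=3$ (any cycle component, of arbitrary length, forces equality), and the paper's one-line derivation from Theorem~\ref{thm4} has exactly the same blind spot. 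You should handle $t=3$ separately --- a $K_{1,3}$-minor free graph is a disjoint union of paths and cycles, so $\rho(G)=2$ iff some component is a cycle --- and confine the ``degree-$2$ subdivision vertex certifies irregularity'' argument to $t\ge 4$, where $2<t-1$ makes it valid.
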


Combining with above results,
the spectral extremal problem on $K_{s,t}$-minor free graphs is completely solved for large enough $n$.
To prove our results, we use some spectral and structural tools,
such as, local edge maximality (see Lemma \ref{lem2.4}),
local degree sequence majorization (see Lemma \ref{lem2.8})
and double eigenvectors transformation to deduce structural properties of extremal graphs.


\section{Preliminaries}

~~~~As usual, $V(G)$ is the vertex set and $E(G)$ is the edge set of a graph $G$.
The number of vertices and edges of $G$ are called its \emph{order} and \emph{size},
and denoted by $|G|$ and $e(G)$, respectively.
Given $u\in V(G)$ and a subgraph $H\subseteq G$ (possibly $u\notin V(H)$),
$N_{V(H)}(u)$ is the set of neighbors of $u$ in $V(H)$ and $d_{V(H)}(u)$ is its cardinality.
If $u\in V(H)$, we also use $N_{H}(u)$ and $d_H(u)$ for convenience.
If $S\subseteq V(G)$, then $G[S]$ and $G-S$ stand for the subgraphs of $G$ induced by $S$
and $V(G)\setminus S$, respectively.
If $S\subseteq E(G)$, then $G-S$ denotes the subgraph obtained from deleting all edges in $S$.
If $A,B\subseteq V(G)$, then $e_G(A,B)$ denotes the number of edges with one endpoint in $A$ and the other in $B$,
and particularly, $e_G(A,A)$ is simplified by $e_G(A)$.

Throughout this section, let $s,t,n$ be integers with $2\leq s\leq t$ and $n$ sufficiently large,
$G^\star$ be the extremal graph with the maximum spectral radius $\rho$
over all $n$-vertex $K_{s,t}$-minor free graphs,
and $X=(x_1,x_2,\ldots,x_n)^T$ be the Perron vector of $G^\star$.
Now let us introduce some important lemmas.
The first is due to Chudnovsky, Reed and Seymour.

\begin{lem}\cite{DJS,CRS}\label{lem2.2}
Let $t\geq3$ and $n\geq t+2$. If $G$ is an $n$-vertex connected graph
with no $K_{1,t}$-minor, then $e(G)\leq {t\choose 2}+n-t$, and this is best possible for all $n,t$.
\end{lem}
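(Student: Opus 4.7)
The target bound $\binom{t}{2}+n-t$ can be rewritten as $(n-1)+\binom{t-1}{2}$, suggesting a ``spanning tree plus dense core'' picture, and my plan is strong induction on $n\geq t+2$. Before the induction, record two structural observations: every $K_{1,t}$-minor-free graph $G$ has $\Delta(G)\leq t-1$, since a vertex of degree $\geq t$ already exhibits $K_{1,t}$ as a subgraph; and more generally, every connected subgraph $H\subseteq G$ satisfies $|N_G(V(H))\setminus V(H)|\leq t-1$, else contracting $H$ to a single vertex would expose the forbidden minor. The base case $n=t+2$ would be checked directly using $\Delta\leq t-1$ together with a short case analysis.

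The easy part of the induction handles leaves: if $G$ has a vertex $v$ of degree $1$, then $G-v$ is connected and $K_{1,t}$-minor-free, so the inductive hypothesis gives $e(G-v)\leq \binom{t}{2}+(n-1-t)$ and hence $e(G)=e(G-v)+1\leq \binom{t}{2}+n-t$, as desired.

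The hard case is $\delta(G)\geq 2$. Here I would pass to the block-cut decomposition and take a leaf block $B$ of $G$; since $\delta\geq 2$ forbids single-edge blocks, $|V(B)|\geq 3$ and $B$ is $2$-connected. If $G\neq B$, let $c$ be the cut vertex of $G$ lying in $B$, and apply induction separately to $B$ and to $G-(V(B)\setminus\{c\})$ (both connected $K_{1,t}$-minor-free of strictly smaller order), combining the two bounds via $e(G)=e(B)+e(G-(V(B)\setminus\{c\}))$ and verifying the arithmetic accounts correctly for the shared cut vertex $c$. If instead $G=B$ is itself $2$-connected, the desired inequality reduces to $e(G)-n\leq \binom{t}{2}-t$, which is exactly the number of ears in the ear decomposition of $K_t$. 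I would then proceed by ear decomposition of $G$: starting from an initial cycle (for which $e-n=0$), each added ear bumps $e-n$ by exactly $1$, and the $K_{1,t}$-minor-free constraint must cap the total number of ears at $\binom{t}{2}-t$, since any further ear would inflate the external neighborhood of some connected subgraph past the $t-1$ threshold dictated by the second structural observation.

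The main obstacle is the $2$-connected subcase: proving that $K_{1,t}$-minor-freeness uniformly caps the cyclomatic complexity at $\binom{t}{2}-t$ is the combinatorial heart of the Chudnovsky--Reed--Seymour argument \cite{CRS}, and requires tracking, for each candidate new ear, precisely which connected subgraph acquires a forbidden $t$-th external neighbor. Sharpness follows from the Ding--Johnson--Seymour constructions \cite{DJS}, which attach suitable short pendant structures (paths or shallow trees) to a copy of $K_t$ in such a way that no vertex acquires degree $t$ and no connected subgraph acquires $t$ external neighbors, thereby attaining the bound $\binom{t}{2}+n-t$ for every $n\geq t$.
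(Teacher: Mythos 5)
The paper does not prove this lemma --- it is quoted directly from Ding--Johnson--Seymour \cite{DJS} and Chudnovsky--Reed--Seymour \cite{CRS} --- so there is nothing in-paper to compare against; I will judge your sketch on its own. Your two structural observations are sound and the leaf-deletion step works, but the remaining steps are genuinely broken.

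The block-decomposition recursion does not close. Splitting at a cut vertex $c$ into pieces of orders $n_1,n_2$ with $n_1+n_2=n+1$ and summing the inductive bounds yields $e(G)\leq 2\binom{t}{2}+(n+1)-2t$, which exceeds the target $\binom{t}{2}+n-t$ by $\binom{t}{2}-t+1>0$ for every $t\geq3$: two leaf blocks cannot both collect the $\binom{t}{2}$ surplus, so the naive induction needs a strictly stronger statement (e.g., tracking which blocks are ``saturated''), and you have not supplied one. There is also a base-case problem you glossed over: for a piece of order $n'\leq t+1$ the hypothesis does not apply, and the conclusion can genuinely fail at $n'=t+1$ --- for odd $t\geq5$, $K_{t+1}$ minus a perfect matching is connected, $(t-1)$-regular, hence $K_{1,t}$-minor-free (any $K_{1,t}$-minor on $t+1$ vertices would force singleton branch sets and a vertex of degree $t$), yet it has $\frac{t^2-1}{2}>\binom{t}{2}+1$ edges. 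Finally, the $2$-connected core is not proved at all: the ``at most $t-1$ external neighbors for any connected subgraph'' observation does not visibly bound the ear count, since in $K_t$ itself the last $\binom{t}{2}-t$ ears are single edges adding no vertices and no connected subgraph ever accrues more than $t-1$ external neighbors, so the observation alone does not forbid one more such ear elsewhere. As you yourself note, this cap is the combinatorial heart of \cite{CRS,DJS}, and it is exactly the part that is missing; what you have is an outline, not a proof.
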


For $s=2$, Nikiforov \cite{NIKI} proved that $G^\star$ contains a dominating vertex.
Tait showed the following result for general $s$.
This gives a very important information for $G^\star$.

\begin{lem}\cite{Tait}\label{lem2.1}
$G^\star$ contains a clique dominating set $K$
with $|K|=s-1$.
\end{lem}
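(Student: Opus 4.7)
The plan is to use a two-step strategy that is standard in spectral extremal problems on minor-closed families: first bracket $\rho(G^\star)$ between a lower bound of order $\sqrt n$ and the structural constraint that $K_{s,t}$-minor free graphs are sparse, in order to locate a small set $K$ of vertices with anomalously large Perron entries; second, run a Kelmans-type switching argument on $K$ to force it to be a dominating clique.

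For the lower bound, the construction $K_{s-1}\nabla\lfloor(n-s+1)/t\rfloor K_t$ is $K_{s,t}$-minor free, so $\rho\geq c\sqrt{n}$ for some $c=c(s,t)>0$. For the sparseness, a Tur\'an-type bound gives $e(G^\star)=O_{s,t}(n)$. Applying the eigenvector equation at the vertex $v_1$ with largest entry $x_1$ yields $d(v_1)\geq\rho\geq c\sqrt n$, and iterating $\rho^2 x_1=\sum_{u\sim v_1}\sum_{w\sim u} x_w$ together with the edge bound shows that the eigenvector mass concentrates on a bounded-size set. I would peel off $v_1,v_2,\ldots,v_{s-1}$ one at a time, each carrying Perron weight $\Omega(1)$ and degree $\Omega(\sqrt n)$, and verify that after removing these the residual spectral radius is $O(1)$. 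Thus $K:=\{v_1,\ldots,v_{s-1}\}$ captures all eigenvector entries of order above a fixed threshold, and the remaining vertices $u\notin K$ satisfy $x_u\ll x_{v_i}$.

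Once $K$ is identified, I would perform the following Kelmans-type switch. If some $u\notin K$ is not adjacent to some $v_i\in K$, pick any neighbor $w\ne v_i$ of $u$, replace the edge $uw$ by $uv_i$, and observe via the rank-two perturbation of $A(G^\star)$ that $\rho$ strictly increases because $x_{v_i}>x_w$. A similar interchange handles a missing edge inside $K$. To finish, one must verify that the switched graph is still $K_{s,t}$-minor free; if not, any $K_{s,t}$-minor in it must use the new edge $uv_i$, and one reroutes it through a short path in the original graph (using a neighbor of $w$ or another vertex of $K$) to contradict $K_{s,t}$-minor freeness of $G^\star$. The maximality of $\rho(G^\star)$ then forces $u\sim v_i$ for every $u\notin K$ and $v_i\in K$, and analogously all edges inside $K$ are present, giving the dominating clique.

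The main obstacle is the minor-preservation half of the switching argument: a naive swap could in principle create a $K_{s,t}$-minor not present before. The technical content is that the concentration of the eigenvector on the $(s-1)$-set $K$ constrains any potential $K_{s,t}$-minor so tightly that its $s$ branch sets must essentially coincide with $K\cup\{u\}$, which allows a replacement path through $w$ to be found inside $G^\star$. Making this rerouting precise in the large-$n$ regime is the delicate step, and it is where the edge bound from Lemma~\ref{lem2.2} applied to components of $G^\star-K$ is used most decisively.
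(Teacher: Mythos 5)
This lemma is cited from Tait's JCTA paper and is not proved here, so there is no in-paper proof to match; your sketch is an attempt to reconstruct Tait's argument. The overall shape is right (spectral lower bound of order $\sqrt n$, linear edge bound for the minor-closed class, eigenvector concentration, then a local modification to force a dominating clique), but two steps as you have written them do not actually close.

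First, the concentration step is too weak quantitatively. The bound $d(v_1)\ge\rho\ge c\sqrt n$ is correct but far short of what you need: to make the final domination argument work, the $s-1$ special vertices must have degree $(1-o(1))n$, not merely $\Omega(\sqrt n)$, and the second-iterate inequality $\rho^2 x_1\le\sum_{u\sim v_1}d(u)\,x_1\le 2e(G^\star)x_1$ by itself only recovers the order of $\rho$, not concentration on a bounded set. The passage from ``there exist $s-1$ vertices with $x_{v_i}=\Omega(x_1)$'' to ``all other vertices have eigenvector entry bounded away from $x_1$ and the $v_i$'s have degree $n-O(1)$'' is the heart of Tait's argument and is entirely absent. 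Also, you cannot conclude the residual graph has $\rho=O(1)$ until you already know $K$ dominates (so that $G^\star-K$ is $K_{1,t}$-minor free and hence has bounded degree); as written this is circular. Second, the minor-preservation half of your Kelmans switch is a genuine gap, not a technicality. Removing $uw$ and adding $uv_i$ is a nontrivial change to the minor structure; your plan to ``reroute'' any new $K_{s,t}$-minor through a short path back into $G^\star$ is only a hope, not an argument, and it is precisely where the difficulty sits. The cleaner route (and closer to what actually works) is the equivalence recorded just after the statement of the lemma in this paper: once $K$ is a dominating $(s-1)$-clique, $G^\star$ is $K_{s,t}$-minor free if and only if $G^\star-K$ has the $(s,t)$-property; the modification step must be phrased so as to establish and then exploit this equivalence rather than case-analyze new minors edge by edge. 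As it stands your sketch leaves both of these load-bearing steps unproved.
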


By Lemma \ref{lem2.1},
we can observe that $G^\star$ is $K_{s,t}$-minor free
if and only if $G^\star-K$ satisfies the following property:
$K_{a,b}$-minor free for all positive integers $a,b$ with $a+b=t+1$ and $1\leq a\leq \min\{s,\lfloor\frac{t+1}{2}\rfloor\}$.
For convenience, we call it \emph{$(s,t)$-property}.

Many known results indicate that an extremal graph with the maximum size
usually is not an extremal graph with the maximum spectral radius.
This observation also happens on $K_{s,t}$-minor free graphs for general $s$ and $t$.
However, the following lemma implies that $G^\star$ has a \emph{local edge maximality}.

\begin{lem}\label{lem2.4}
Let $H$ be a disjoint union of several components of $G^\star-K$ such that $|H|\leq N$ (a constant).
If $H'$ also has $(s,t)$-property with $V(H')=V(H)$, then $e(H')\leq e(H)$.
\end{lem}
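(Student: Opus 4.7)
The plan is a swap-and-Rayleigh-quotient argument. Suppose for contradiction that some $H'$ on $V(H)$ has the $(s,t)$-property with $e(H')>e(H)$, and form $G':=\bigl(G^\star-E(H)\bigr)\cup E(H')$. I will show both that $G'$ is $K_{s,t}$-minor free and that $X^{T}A(G')X>\rho\,X^{T}X$, contradicting the spectral extremality of $G^\star$.

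For minor-freeness, the observation following Lemma~\ref{lem2.1} reduces the claim to checking that $G'-K$ has the $(s,t)$-property. The components of $G'-K$ are precisely the components of $G^\star-K$ that were left untouched (each inheriting the $(s,t)$-property from $G^\star-K$), together with the components of $H'$ (each inheriting it from $H'$). Because every $K_{a,b}$ arising in the $(s,t)$-property is connected, any such minor lies inside a single component, so $G'-K$ inherits the property, and consequently $G'$ is $K_{s,t}$-minor free.

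Next I quantify the Perron entries on $V(H)$. Theorem~\ref{thm2} applied to the feasible graph $K_{s-1}\nabla(pK_t\cup K_q)$ gives $\rho=\Theta(\sqrt{n})$. Writing $W:=\sum_{u\in K}x_u$ and using that every vertex outside $K$ is adjacent to all of $K$, the eigenvalue equation reads
$$\rho\,x_v \;=\; W+\sum_{u\in N_{V(H)}(v)}x_u \qquad\text{for every }v\in V(H).$$
Set $x^{\ast}:=\max_{v\in V(H)}x_v$. Evaluating the identity at a maximizer and using $|V(H)|\leq N$ gives $(\rho-N+1)x^{\ast}\leq W$, hence $x^{\ast}\leq 2W/\rho$ for $n$ large. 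Substituting back, every $v\in V(H)$ satisfies $W/\rho\leq x_v\leq W/\rho+O(W/\rho^{2})$, so $x_ux_v=(W/\rho)^{2}\bigl(1+O(1/\sqrt{n})\bigr)$ for every edge $uv$ inside $V(H)$. Therefore
$$X^{T}A(G')X-\rho\,X^{T}X \;=\; 2\!\!\sum_{uv\in E(H')}\!\!x_ux_v-2\!\!\sum_{uv\in E(H)}\!\!x_ux_v \;=\; 2(W/\rho)^{2}\bigl(e(H')-e(H)+O(N^{2}/\sqrt{n})\bigr),$$
which is strictly positive for $n$ sufficiently large, since $e(H')-e(H)\geq 1$. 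By the Rayleigh quotient, $\rho(G')>\rho$, contradicting the extremality of $G^\star$.

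The main obstacle is the quantitative near-constancy of the Perron entries on $V(H)$: the per-edge deviation of $x_ux_v$ from $(W/\rho)^{2}$ has to be dominated by the gain produced by one extra edge. This is exactly why the constant cap $|V(H)|\leq N$ is imposed, so that the cumulative error from the at most $\binom{N}{2}$ edges on either side stays $o\bigl((W/\rho)^{2}\bigr)$ once $\rho=\Theta(\sqrt{n})\to\infty$; without such a cap, the swap argument would no longer close.
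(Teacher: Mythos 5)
Your proof is correct and takes essentially the same route as the paper's: both form $G'=G^\star-E(H)+E(H')$, both observe that every Perron entry on $V(H)$ is pinned to $W/\rho$ up to a multiplicative $1+O(1/\rho)$ factor (the paper packages this as its Claim 2.1, using the degree bound $\Delta(H)<t$ rather than your $|V(H)|\leq N$, but both are fixed constants), and both conclude via the Rayleigh quotient that an extra edge in $H'$ would strictly increase $\rho$. Your write-up is a bit more explicit about why $G'$ remains $K_{s,t}$-minor free, but the underlying argument is identical.
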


\begin{proof}
For convenience, let
$$X_0=\sum_{v\in K}x_v,~~~x_1=\max_{v\in V(H)}x_v~~~\hbox{and}~~~x_2=\min_{v\in V(H)}x_v.$$
Since $H$ has $(s,t)$-property, then $H$ is $K_{1,t}$-minor free
and hence $\Delta(H)<t$. So, $\rho x_1<X_0+tx_1$ and $\rho x_2\geq X_0$.
It follows that
\begin{align}\label{a1}
x_1<\frac{X_0}{\rho-t}~~~\hbox{and}~~~x_2\geq \frac{X_0}{\rho}.
\end{align}
We now give a claim, which will be frequently used in the subsequent proof.
\begin{claim}\label{cl2.1}
Let $a,b$ be two constants with $a>b$. Then $ax_2>bx_1$ and $ax_2^2>bx_1^2$.
\end{claim}

\begin{proof}
By Lemma \ref{lem2.1},
$\Delta(G^\star)=n-1$ and thus $\rho\geq \rho(K_{1,n-1})=\sqrt{n-1}.$
Since $n$ is large enough and $a,b,t$ are constants,
we can easily have
$$ax_2-bx_1>X_0\left(\frac a \rho-\frac b {\rho-t}\right)>0,$$
and similarly, $ax_2^2>bx_1^2$.
\end{proof}

Let $G'=G^\star-E(H)+E(H')$ and $\rho'=\rho(G')$.
Then $G'$ is also $K_{s,t}$-minor free.
By the way of contradiction, assume that $e(H')\geq e(H)+1$.
Then by Claim \ref{cl2.1},
\begin{eqnarray*}
\rho'-\rho &\geq& X^T(A(G')-A(G^\star))X
=2\sum\limits_{uv\in E(H')}x_ux_v-2\sum\limits_{uv\in E(H)}x_ux_v\\
&\geq& 2e(H')x_2^2-2e(H)x_1^2\\
&>& 0,
\end{eqnarray*}
a contradiction with the maximality of $\rho(G^\star)$. So $e(H')\leq e(H)$.
\end{proof}

For a graph $H$ and two vertices $u,v\in V(H)$ (possibly, $uv\notin E(H)$),
we use $d_H(uv)$ to denote $d_H(u)+d_H(v)$ for convenience.

\begin{lem}\label{lem2.5}
Let $H$ be a disjoint union of several components of $G^\star-K$ such that $|H|\leq N$ (a constant),
If $H'$ also has $(s,t)$-property with $V(H')=V(H)$ and $e(H')=e(H)$,
then $$\sum\limits_{uv\in E(H')}d_H(uv)\leq \sum\limits_{uv\in E(H)}d_H(uv),$$
and if equality holds, then $$\sum\limits_{uv\in E(H')}d_{H'}(uv)\leq \sum\limits_{uv\in E(H)}d_{H}(uv).$$
\end{lem}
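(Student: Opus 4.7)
\emph{Plan.} I would mirror the proof of Lemma \ref{lem2.4} but carry the Perron expansion one order further. Set $G':=G^\star-E(H)+E(H')$; since $H'$ has the $(s,t)$-property on the same vertex set, $G'$ remains $K_{s,t}$-minor free by the observation after Lemma \ref{lem2.1}, so $\rho':=\rho(G')\le\rho$ by extremality of $G^\star$. The Rayleigh principle applied to $G'$ with test vector $X$ then gives
$$\sum_{uv\in E(H')}x_ux_v\;\le\;\sum_{uv\in E(H)}x_ux_v.\qquad(\star)$$

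To exploit $(\star)$, I would expand each $x_v$ ($v\in V(H)$) using $\rho x_v=X_0+\sum_{u\in N_H(v)}x_u$. Since $\Delta(H)<t$ is bounded while $\rho=\Theta(\sqrt{n})$, setting $c:=X_0/\rho$ and iterating the Perron equation yields $x_v=c+cd_H(v)/\rho+O(c/\rho^2)$, hence $x_ux_v=c^2+c^2 d_H(uv)/\rho+O(c^2/\rho^2)$. Summing over $E(H)$ and $E(H')$ and using $e(H')=e(H)$ to kill the constant term produces
$$\sum_{E(H)}x_ux_v-\sum_{E(H')}x_ux_v=\frac{c^2}{\rho}\Bigl(\sum_{E(H)}d_H(uv)-\sum_{E(H')}d_H(uv)\Bigr)+O\!\left(\frac{c^2}{\rho^2}\right).$$
The parenthesised quantity is an integer whose magnitude is bounded by a constant depending only on $N,s,t$; combined with $(\star)$ and the fact that the $c^2/\rho$ scale dominates the $c^2/\rho^2$ error for $n$ large, this integer must be nonnegative. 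That is the first inequality.

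For the second inequality, assume equality in the first. Then the leading term in the expansion vanishes and $(\star)$ gives $\sum_{E(H)}x_ux_v-\sum_{E(H')}x_ux_v=O(c^2/\rho^2)$, from which $\rho-\rho'=O(1/n^2)$. To access $d_{H'}$-information I would then invoke the \emph{double eigenvectors transformation}: let $Y$ be the Perron vector of $G'$ with eigenvalue $\rho'$, and apply Rayleigh to $G^\star$ with $Y$ as the test vector, which yields
$$\sum_{E(H)}y_uy_v-\sum_{E(H')}y_uy_v\le\frac{(\rho-\rho')\,Y^TY}{2}=O\!\left(\frac{c'^2}{\rho'^2}\right),\qquad c':=Y_0/\rho'.$$
An analogous Perron expansion for $G'$ (now governed by $d_{H'}$) extracts an integer bracket comparing $\sum_v d_H(v)d_{H'}(v)$ with $\sum_v d_{H'}(v)^2$ at the $c'^2/\rho'$-scale. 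Combining this $Y$-bound with the hypothesised first equality $\sum_v d_H(v)d_{H'}(v)=\sum_v d_H(v)^2$ and applying Cauchy--Schwarz to the two degree vectors $(d_H(v))_{v\in V(H)}$ and $(d_{H'}(v))_{v\in V(H)}$ pins down the degree functions to agree pointwise on $V(H)$, and in particular delivers the stated inequality $\sum_{E(H')}d_{H'}(uv)\le\sum_{E(H)}d_H(uv)$.

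The principal obstacle is the quantitative error control in the two Perron expansions: the gap between the integer-scale leading term $c^2/\rho$ and the next-order error $c^2/\rho^2$ is only a factor of $\rho$, so the $O$-constants must be tracked uniformly rather than hidden in asymptotic notation; this is where the hypothesis $|V(H)|\le N$ is essential, keeping the combinatorial coefficients bounded independently of $n$ (a use of the analogue of Claim \ref{cl2.1} to control $x_1/x_2$ and $y_1/y_2$ ratios is natural here). A secondary bookkeeping subtlety is that the $X$-expansion produces bounds in terms of $d_H$-sums while the $Y$-expansion produces bounds in terms of $d_{H'}$-sums; the double eigenvectors transformation is precisely the device that reconciles these through the single scalar relation $\rho\ge\rho'$.
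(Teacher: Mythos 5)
Your treatment of the first inequality is sound and is essentially the paper's argument: you take the Rayleigh inequality for $G'$ with test vector $X$, then push the Perron identity $\rho x_v=X_0+\sum_{u\in N_H(v)}x_u$ one order further; the paper does the same thing, only it keeps the quantities exact via the bounds $X_0+d_H(v)x_2\le\rho x_v\le X_0+d_H(v)x_1$ and the estimate $x_2/x_1\to 1$ rather than writing $O(\cdot)$ asymptotics. That part matches.

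The second half has a genuine gap, and it is a sign error that no amount of bookkeeping will repair. You propose to apply Rayleigh to $G^\star$ with test vector $Y$, i.e.\ you work with the \emph{quadratic} form $Y^TA(G^\star)Y\le\rho\,Y^TY$, giving
$\sum_{E(H)}y_uy_v-\sum_{E(H')}y_uy_v\le\tfrac12(\rho-\rho')Y^TY$.
Carry out your own Perron expansion $y_v=c'+c'd_{H'}(v)/\rho'+O(c'/\rho'^2)$ and use $e(H)=e(H')$: the left side becomes $\tfrac{c'^2}{\rho'}\bigl(\sum_{E(H)}d_{H'}(uv)-\sum_{E(H')}d_{H'}(uv)\bigr)+O(c'^2/\rho'^2)$, i.e.\ $\tfrac{c'^2}{\rho'}\bigl(\sum_v d_H(v)d_{H'}(v)-\sum_v d_{H'}(v)^2\bigr)+O(c'^2/\rho'^2)$. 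Since the right side is $O(c'^2/\rho'^2)$ once you assume equality in the first inequality, the integer bracket must be $\le 0$ for large $n$; that is $\sum_v d_H(v)d_{H'}(v)\le\sum_v d_{H'}(v)^2$. Together with the hypothesised equality $\sum_v d_H(v)d_{H'}(v)=\sum_v d_H(v)^2$ this yields $\sum_v d_H(v)^2\le\sum_v d_{H'}(v)^2$ --- which is the \emph{opposite} of the claim $\sum_{E(H')}d_{H'}(uv)\le\sum_{E(H)}d_H(uv)$. It is also exactly what Cauchy--Schwarz already says (for any nonnegative vectors, $\bigl(\sum d_Hd_{H'}\bigr)^2\le\sum d_H^2\cdot\sum d_{H'}^2$, so $\sum d_H^2=\sum d_Hd_{H'}$ forces $\sum d_H^2\le\sum d_{H'}^2$). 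So your $Y$-Rayleigh step reproduces Cauchy--Schwarz, contributes no new information, and the stated conclusion simply does not follow; the proof cannot close.

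The reason the paper's ``double eigenvectors transformation'' succeeds is that it does not compare $Y^TA(G^\star)Y$ with $Y^TA(G')Y$. Instead it forms the mixed bilinear quantity
$Y^T\bigl(A(G')-A(G^\star)\bigr)X=(\rho'-\rho)\,Y^TX\le 0$.
Expanding this with $\rho x_u$ controlled via $d_H$ and $\rho'y_v$ controlled via $d_{H'}$ produces, at the leading order and after using $e(H)=e(H')$, $b'=a$ and the assumption $a=b$, an effective inequality of the form $\sum_v d_{H'}(v)^2\le\sum_v d_H(v)d_{H'}(v)$ --- the \emph{correct} direction. In other words, the bilinear pairing puts the $X$-side information (governed by $d_H$) and the $Y$-side information (governed by $d_{H'}$) on opposite sides of the same $\le 0$, whereas your quadratic form in $Y$ only ever sees $d_{H'}$. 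You invoked the name of the right tool, but the actual inequality you wrote down is the ordinary Rayleigh quotient in $Y$, which points the wrong way.
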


\begin{proof}
Let $G'=G^\star-E(H)+E(H')$ and $\rho'=\rho(G')$.
Then $\rho\geq \rho'$ and
\begin{align}\label{a2}
\frac{1}{2}\rho^2(\rho'-\rho)\geq \frac{1}{2}\rho^2X^T(A(G')-A(G^\star))X
=\sum\limits_{uv\in E(H')}\rho x_u\rho x_v-\sum\limits_{uv\in E(H)}\rho x_u\rho x_v.
\end{align}
Recall that $$X_0=\sum_{v\in K}x_v,~~~x_1=\max_{v\in V(H)}x_v~~~\hbox{and}~~~x_2=\min_{v\in V(H)}x_v.$$
Thus, for any $v\in V(H)$, we have
\begin{align}\label{a3}
X_0+d_H(v)x_2\leq\rho x_v\leq X_0+d_H(v)x_1.
\end{align}
It follows that $$\sum\limits_{uv\in E(H')}\rho x_u\rho x_v\geq\sum\limits_{uv\in E(H')}(X_0+d_H(u)x_2)(X_0+d_H(v)x_2),$$
$$\sum\limits_{uv\in E(H)}\rho x_u\rho x_v\leq \sum\limits_{uv\in E(H)}(X_0+d_H(u)x_1)(X_0+d_H(v)x_1).$$
Now let
$$a=\sum\limits_{uv\in E(H')}d_H(uv),~~b=\sum_{uv\in E(H)}d_H(uv),~~c=\sum\limits_{uv\in E(H)}d_H(u)d_H(v),$$
and suppose to the contrary that $a\geq b+1.$ Then
\begin{eqnarray*}
\sum\limits_{uv\in E(H')}\rho x_u\rho x_v-\sum\limits_{uv\in E(H)}\rho x_u\rho x_v &\geq& aX_0x_2-bX_0x_1-cx_1^2\\
&=&X_0\left(\left(a-\frac12\right)x_2-bx_1\right)+\left(\frac12 X_0x_2-cx_1^2\right).
\end{eqnarray*}
By Claim \ref{cl2.1}, $\left(a-\frac12\right)x_2>bx_1.$
And by (\ref{a1}),
$$\frac12 X_0x_2-cx_1^2>X^2_0\left(\frac{1}{2\rho}-\frac{c}{(\rho-t)^2}\right)>0,$$
since $\rho\geq\sqrt{n-1}$ and $n$ is large enough.
Combining with (\ref{a2}), we have $\rho'>\rho$,
a contradiction. Hence, the first inequality holds.

We now prove the second inequality.
Let $a=b$ and $Y=(y_1,y_2,\ldots,y_n)^T$ be the Perron vector of $G'$ such that
$y_i$ and $x_i$ correspond the the same vertex $i$.
Suppose to the contrary that $a'\geq b+1$,
and assume that
$$Y_0=\sum_{v\in K}y_v,~~~y_1=\max_{v\in V(H')}y_v~~~\hbox{and}~~~y_2=\min_{v\in V(H')}y_v.$$
Similar with (\ref{a3}), we have
\begin{align}\label{a4}
Y_0+d_{H'}(v)y_2\leq\rho' y_v\leq Y_0+d_{H'}(v)y_1.
\end{align}
for any $v\in V(H')$. It follows that
\begin{align}\label{a5}
y_1<\frac{Y_0}{\rho'-t}~~~\hbox{and}~~~y_2\geq \frac{Y_0}{\rho'}.
\end{align}
Now we can see that
\begin{eqnarray*}
\rho'\rho(\rho'-\rho)Y^TX &=& \rho'\rho\left((A(G')Y)^TX-Y^T(A(G)X)\right)\\
&=& \sum\limits_{uv\in E(H')}(\rho x_u\rho'y_v+\rho x_v\rho'y_u)-\sum\limits_{uv\in E(H)}(\rho x_u\rho'y_v+\rho x_v\rho'y_u).
\end{eqnarray*}
Now let $$a'=\sum_{uv\in E(H')}d_{H'}(uv),b'=\sum_{uv\in E(H)}d_{H'}(uv),
c'=\sum\limits_{uv\in E(H)}\left(d_H(u)d_{H'}(v)+d_{H}(v)d_{H'}(u)\right).$$
Note that $e(H)=e(H')$. By (\ref{a3}) and (\ref{a4}), we have
\begin{eqnarray*}
&&\rho'\rho(\rho'-\rho)Y^TX\\
&\geq&\sum\limits_{uv\in E(H')}\left((X_0+d_H(u)x_2)(Y_0+d_{H'}(v)y_2)+(X_0+d_H(v)x_2)(Y_0+d_{H'}(u)y_2)\right)\\
&&-\sum\limits_{uv\in E(H)}\left((X_0+d_H(u)x_1)(Y_0+d_{H'}(v)y_1)+(X_0+d_H(v)x_1)(Y_0+d_{H'}(u)y_1)\right)\\
&\geq& aY_0x_2+a'X_0y_2-bY_0x_1-b'X_0y_1-c'x_1y_1.
\end{eqnarray*}
Recall that $d_H(uv)=d_H(u)+d_H(v)$. We can observe that
$$\sum_{uv\in E(H')}d_{H}(uv)=\sum_{v\in V(H')}d_H(v)d_{H'}(v)=\sum_{v\in V(H)}d_H(v)d_{H'}(v)=\sum_{uv\in E(H)}d_{H'}(uv),$$
that is, $a=b'.$ Combining with $a=b$ and $a'\geq a+1$, we have
\begin{eqnarray*}
\rho'\rho(\rho'-\rho)Y^TX &\geq& aY_0x_2+(a+1)X_0y_2-aY_0x_1-aX_0y_1-c'x_1y_1\\
&=& X_0\left(\left(a+\frac12\right)y_2-ay_1\right)+\left(\frac{X_0y_2}2+aY_0\left(x_2-x_1\right)-c'x_1y_1\right).
\end{eqnarray*}
By Claim \ref{cl2.1}, we have
$\left(a+\frac12\right)y_2>ay_1.$
And by (\ref{a1}) and (\ref{a5}), we have
$$\frac {X_0y_2}2+aY_0\left(x_2-x_1\right)-c'x_1y_1>
X_0Y_0\left(\frac1{2\rho'}+\frac a\rho-\frac a{\rho-t}-\frac{c'}{(\rho-t)(\rho'-t)}\right)>0,$$
since $\rho\geq\rho'\geq \rho(K_{1,n-1})=\sqrt{n-1}$ and $a,c',t$ are constants.
It follows that $\rho'\rho(\rho'-\rho)Y^TX>0$, that is, $\rho'>\rho$, a contradiction.
This completes the proof.
\end{proof}

\begin{definition}
\emph{Let $X=(x_1,x_2,\ldots,x_n)^T$ and $Y=(y_1,y_2,\ldots,y_n)^T$ be two decreasing real vectors.
If $$\sum_{i=1}^kx_{i}\le \sum_{i=1}^ky_{i},~~k=1,2,\ldots,n,$$
then we say that $X$ is} weakly majorized \emph{by
$Y$ and denote it by $X\prec_w Y$. If $X\prec_w Y$
and $\sum_{i=1}^nx_i=\sum_{i=1}^ny_i$, then we say
that $X$ is} majorized \emph{by $Y$ and denote it by
$x \prec y$.}
\end{definition}

The following two lemmas are needed in the proof of Lemma \ref{lem2.8},
which is one of our main tools in this paper.

\begin{lem}\cite{LIN}\label{lem2.6}
Let $X=(x_1,x_2,\ldots,x_n)^T$, $Y=(y_1,y_2,\ldots,y_n)^T$ be two nonnegative
decreasing real vectors. If $X\prec_w Y$,
then $\|X\|_p\leq\|Y\|_p$ for $p>1$, with equality holding if and only if $X=Y$.
\end{lem}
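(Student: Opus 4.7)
The plan is to reduce the bound $\|X\|_p\leq\|Y\|_p$ to two standard ingredients: the tangent-line inequality for the strictly convex function $f(x)=x^p$ on $[0,\infty)$ (valid for $p>1$), together with an Abel summation step that turns the weak-majorization hypothesis into an obvious nonnegativity statement.

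First I would write $\|Y\|_p^p - \|X\|_p^p = \sum_{i=1}^n (y_i^p - x_i^p)$ and apply the convexity inequality $y_i^p - x_i^p \geq p\,x_i^{p-1}(y_i-x_i)$, valid for all $x_i,y_i\geq 0$. This reduces the problem to showing $\sum_{i=1}^n x_i^{p-1}(y_i-x_i)\geq 0$. Setting $A_k = \sum_{i=1}^k (y_i-x_i)$, the hypothesis $X\prec_w Y$ gives $A_k\geq 0$ for every $k$. Summation by parts then yields
$$\sum_{i=1}^n x_i^{p-1}(y_i-x_i) = \sum_{i=1}^{n-1}\bigl(x_i^{p-1}-x_{i+1}^{p-1}\bigr)A_i + x_n^{p-1}A_n,$$
and because $X$ is decreasing with nonnegative entries and $p-1>0$, every factor on the right is nonnegative. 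This establishes $\|X\|_p\leq\|Y\|_p$.

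For the equality case I would argue as follows: if $\|X\|_p=\|Y\|_p$, the chain above must collapse, so in particular the pointwise convexity gap $y_i^p - x_i^p - p\,x_i^{p-1}(y_i-x_i)$ must vanish for every $i$. Since $x^p$ is strictly convex on $[0,\infty)$ when $p>1$, this forces $x_i=y_i$ whenever $x_i>0$; and when $x_i=0$ the gap reduces to $y_i^p$, which is $0$ only if $y_i=0$. Hence $X=Y$.

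The only point to watch, rather than a substantive obstacle, is the boundary behaviour at $x_i=0$: the tangent-line slope $f'(0)$ vanishes, so the convexity inequality trivializes on those coordinates. This is harmless for the main inequality, but must be handled separately in the equality characterization to conclude $y_i=0$ in that case.
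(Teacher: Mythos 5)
The paper does not prove Lemma~\ref{lem2.6} at all; it is quoted directly from Lin and Ning~\cite{LIN}, so there is no in-paper argument to compare yours against. On its own merits your proof is correct, and it is the standard route for this kind of weak-majorization monotonicity statement. The tangent-line bound $y_i^p - x_i^p \ge p\,x_i^{p-1}(y_i-x_i)$ is valid for all $x_i,y_i\ge 0$ and $p>1$; your Abel summation identity
$$\sum_{i=1}^n x_i^{p-1}(y_i-x_i)=\sum_{i=1}^{n-1}\bigl(x_i^{p-1}-x_{i+1}^{p-1}\bigr)A_i+x_n^{p-1}A_n$$
is correct with $A_0=0$, and each factor is nonnegative because $X$ is nonnegative and decreasing, $p-1>0$, and $A_k\ge 0$ follows from $X\prec_w Y$. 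For the equality case, writing
$$\|Y\|_p^p-\|X\|_p^p=\sum_{i=1}^n\bigl(y_i^p-x_i^p-p\,x_i^{p-1}(y_i-x_i)\bigr)+p\sum_{i=1}^n x_i^{p-1}(y_i-x_i)$$
shows that both summands on the right are nonnegative, so equality forces each convexity gap to vanish; strict convexity of $x\mapsto x^p$ then gives $y_i=x_i$ when $x_i>0$, and when $x_i=0$ the gap reduces to $y_i^p$, forcing $y_i=0$. You have handled the only delicate point, the degenerate tangent slope at $x_i=0$, exactly as needed.
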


\begin{lem}[Exercises 5 (i), P74, \cite{Zhan}]\label{lem2.7}
Let $X,Y,Z\in R^n$ be three
decreasing vectors.
If $X\prec Y$, then $X^T\cdot Z\leq Y^T\cdot Z$.
\end{lem}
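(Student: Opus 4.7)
The plan is to prove Lemma~\ref{lem2.7} by Abel summation (summation by parts), taking advantage of two features of the hypothesis: the partial sums of $X$ never exceed those of $Y$ and share the same final value, while the coordinates of $Z$ are arranged in decreasing order so that consecutive differences are non-negative.

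Writing $S_k(X)=\sum_{i=1}^k x_i$ and $S_k(Y)=\sum_{i=1}^k y_i$, I would first re-express each inner product as
$$
X^T Z=\sum_{k=1}^{n-1} S_k(X)(z_k-z_{k+1})+S_n(X)\,z_n,
$$
and similarly for $Y^T Z$. Subtracting and regrouping yields
$$
Y^T Z - X^T Z = \sum_{k=1}^{n-1}\bigl(S_k(Y)-S_k(X)\bigr)(z_k-z_{k+1}) + \bigl(S_n(Y)-S_n(X)\bigr) z_n.
$$
Now every factor is non-negative: $S_k(Y)-S_k(X)\ge 0$ comes from $X\prec_w Y$, $z_k-z_{k+1}\ge 0$ comes from $Z$ being decreasing, and the boundary term vanishes because true majorization $X\prec Y$ forces $S_n(X)=S_n(Y)$. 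This delivers $Y^T Z\ge X^T Z$.

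I do not anticipate any real obstacle; the argument is a direct application of the Hardy--Littlewood--P\'olya rearrangement trick. The one subtlety to keep track of is the distinction between weak majorization $\prec_w$ (for which the boundary term $(S_n(Y)-S_n(X))z_n$ survives and one would additionally need $z_n\ge 0$) and the strict majorization $\prec$ assumed here, under which that term vanishes automatically and no sign hypothesis on $Z$ is required. An alternative heavier route would be to use Hardy--Littlewood--P\'olya to write $X=DY$ for some doubly stochastic matrix $D$, then apply Birkhoff's theorem to decompose $D$ as a convex combination of permutation matrices and finish via the standard rearrangement inequality, but the Abel summation approach is cleaner and entirely elementary.
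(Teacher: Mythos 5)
Your proof is correct, and it fills a genuine gap: the paper does not prove Lemma~\ref{lem2.7} at all, it simply cites it as Exercise~5(i), p.~74 of Zhan's \emph{Matrix Theory}, so there is no ``paper proof'' to compare against. The Abel-summation identity you invoke,
\[
\sum_{i=1}^n x_i z_i \;=\; \sum_{k=1}^{n-1} S_k(X)\,(z_k-z_{k+1}) \;+\; S_n(X)\,z_n ,
\]
is readily verified by a telescoping rearrangement, and subtracting the analogous identity for $Y$ gives
\[
Y^T Z - X^T Z \;=\; \sum_{k=1}^{n-1}\bigl(S_k(Y)-S_k(X)\bigr)(z_k-z_{k+1}) \;+\; \bigl(S_n(Y)-S_n(X)\bigr)z_n .
\]
Each summand is nonnegative because $X\prec Y$ forces $S_k(X)\leq S_k(Y)$ and $Z$ decreasing forces $z_k\geq z_{k+1}$, while $S_n(X)=S_n(Y)$ kills the boundary term, so no sign condition on $Z$ is needed. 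Your remark distinguishing $\prec$ from $\prec_w$ is the right thing to flag: the lemma as stated genuinely requires full majorization, since under $\prec_w$ alone the surviving term $(S_n(Y)-S_n(X))z_n$ would additionally require $z_n\geq 0$. The alternative Birkhoff-plus-rearrangement route you sketch is also standard and correct, but the Abel-summation argument is the shorter and more self-contained of the two, and is exactly the kind of proof one would expect to be intended by the cited exercise.
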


Let $\pi(G)$ be the decreasing sequence of vertex degrees of a graph $G$.
By using majorization, B{\i}y{\i}ko\v{g}lu and Leydold \cite{BL} showed that,
for two trees $T$ and $T'$ with $|T|=|T'|$,
if $\pi(T')\prec\pi(T)$, then $\rho(T')\leq \rho(T).$
Unfortunately, this nice tool does not work for general graphs, even for unicyclic graphs.
However, the following lemma implies that $G^\star$ has a local degree sequence majorization.

\begin{lem}\label{lem2.8}
Let $H$ and $H'$ be defined as in Lemma \ref{lem2.5}.
Let $\pi(H)=(d_1,\ldots,d_{|H|})$ and $\pi(H')=(d'_1,\ldots,d'_{|H|})$.
If $\pi(H)\prec\pi(H')$, then $\pi(H)=\pi(H')$.
\end{lem}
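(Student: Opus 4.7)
The plan is to introduce an auxiliary graph $H''$ that is isomorphic to $H'$ but with its vertices relabeled so as to \emph{align} its degree sequence with that of $H$ entry by entry, and then to squeeze $\|\pi(H)\|_2 = \|\pi(H')\|_2$ between Lemma~\ref{lem2.5} and Lemma~\ref{lem2.6}, using Lemma~\ref{lem2.7} to convert the majorization hypothesis into a scalar inequality.

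Concretely, I would list the vertices of $V(H)=V(H')$ in two ways: as $v_1,\ldots,v_{|H|}$ with $d_H(v_1) \geq \cdots \geq d_H(v_{|H|})$, and as $u_1,\ldots,u_{|H|}$ with $d_{H'}(u_1) \geq \cdots \geq d_{H'}(u_{|H|})$. Then I would define $H''$ on the vertex set $V(H)$ by declaring $v_iv_j \in E(H'')$ if and only if $u_iu_j \in E(H')$. By construction $H'' \cong H'$, so $H''$ inherits the $(s,t)$-property, and $V(H'') = V(H)$, $e(H'') = e(H') = e(H)$; thus $H''$ is eligible to play the role of $H'$ in Lemma~\ref{lem2.5}. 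Crucially $d_{H''}(v_i) = d_{H'}(u_i) = d'_i$ for every $i$, so the vertex-paired sum appearing in Lemma~\ref{lem2.5} becomes the order-aligned dot product
$$\sum_{v \in V(H)} d_H(v)\,d_{H''}(v) = \sum_{i=1}^{|H|} d_id'_i.$$

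Next, applying the first part of Lemma~\ref{lem2.5} to the pair $(H,H'')$ gives $\sum_i d_id'_i \leq \sum_i d_i^2$, while Lemma~\ref{lem2.7} with $X=\pi(H)$, $Y=\pi(H')$ and the decreasing vector $Z=\pi(H)$ turns the hypothesis $\pi(H) \prec \pi(H')$ into the reverse bound $\sum_i d_i^2 \leq \sum_i d_id'_i$. These two inequalities force equality in the first part of Lemma~\ref{lem2.5}, so its second part upgrades to $\sum_i (d'_i)^2 \leq \sum_i d_i^2$. Since $\pi(H) \prec \pi(H')$ also implies $\pi(H) \prec_w \pi(H')$, Lemma~\ref{lem2.6} with $p=2$ supplies the matching inequality $\sum_i d_i^2 \leq \sum_i (d'_i)^2$ together with its equality characterization, and these together force $\pi(H) = \pi(H')$.

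The main conceptual step is the initial introduction of the relabeled copy $H''$: without this alignment, the sum in Lemma~\ref{lem2.5} is the vertex-paired sum $\sum_v d_H(v)\,d_{H'}(v)$, which need not equal the sorted dot product $\sum_i d_id'_i$, and the hypothesis $\pi(H) \prec \pi(H')$ then has no natural hook into it. Once $H''$ has been produced the remaining argument is a short two-sided squeeze driven by Lemmas~\ref{lem2.5},~\ref{lem2.6} and~\ref{lem2.7}, and I anticipate no further technical obstacle.
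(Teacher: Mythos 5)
Your proposal is essentially the paper's own proof: the paper makes the same alignment move (``by graph isomorphism, we may let $d_{H'}(v_i)=d'_i$'') that you package as the relabeled copy $H''$, and it then squeezes $\sum_i d_i d'_i$ and $\sum_i (d'_i)^2$ against $\sum_i d_i^2$ via the identical applications of Lemmas~\ref{lem2.5},~\ref{lem2.6} and~\ref{lem2.7}. The only superficial difference is that the paper argues by contradiction from the strict inequality case of Lemma~\ref{lem2.6}, whereas you derive the two-sided equality directly and then invoke its equality characterization; these are logically the same.
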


\begin{proof}
Let $V(H)=\{v_1,v_2\ldots,v_{|H|}\}$ with $d_H(v_i)=d_i$.
Since $V(H)=V(H')$, by graph isomorphism, we may let $d_{H'}(v_i)=d'_i$ for $i=1,2,\ldots,|H|$.
Suppose to the contrary that $\pi(H)\neq\pi(H')$. Then by Lemma \ref{lem2.6}, we have
\begin{align}\label{a6}
\sum\limits_{i=1}^{|H|}{d'}_i^2>\sum\limits_{i=1}^{|H|}d_i^2.
\end{align}
Now let $X=Z=\pi(H)$ and $Y=\pi(H')$ in Lemma \ref{lem2.7}. Then we have
\begin{align}\label{a7}
\sum\limits_{i=1}^{|H|}d_i^2\leq\sum\limits_{i=1}^{|H|}d_id'_i.
\end{align}
Furthermore, we see that
$$\sum\limits_{i=1}^{|H|}d_id'_i=\sum\limits_{v\in V(H')}d_H(v)d_{H'}(v)
=\sum\limits_{uv\in E(H')}(d_H(u)+d_{H}(v))=\sum\limits_{uv\in E(H')}d_H(uv).$$
And similarly,
$$\sum\limits_{i=1}^{|H|}d_i^2=\sum\limits_{uv\in E(H)}(d_H(u)+d_H(v))=\sum\limits_{uv\in E(H)}d_H(uv).$$
It follows from the first inequality of Lemma \ref{lem2.5} that
$\sum\limits_{i=1}^{|H|}d_id'_i\leq \sum\limits_{i=1}^{|H|}d_i^2.$
Combining with (\ref{a7}), we have $\sum\limits_{i=1}^{|H|}d_id'_i=\sum\limits_{i=1}^{|H|}d_i^2.$
By the second inequality of Lemma \ref{lem2.5}, we have
$$\sum\limits_{uv\in E(H')}d_{H'}(uv)\leq\sum\limits_{uv\in E(H)}d_{H}(uv),$$
that is,
$\sum\limits_{i=1}^{|H|}{d'}_i^2\leq\sum\limits_{i=1}^{|H|}d_i^2,$
which contradicts (\ref{a6}). So, $\pi(H)=\pi(H')$.
\end{proof}

\section{Characterization of Components in $G^\star-K$}

~~~~
Throughout this section, assume that $t\geq4$ and $2\leq s\leq t$.
Let $G^\star, K, H_{s,t}$ and related notations be defined as above.
In particular, recall that each component $H$ of $G^\star-K$ satisfies $(s,t)$-property,
that is, $H$ is $K_{a,b}$-minor free for all positive integers $a,b$ with $a+b=t+1$ and
$1\leq a\leq \gamma$, where $\gamma=\min\{s,\lfloor\frac{t+1}{2}\rfloor\}$.

\begin{lem}\label{lem3.0}
Let $G$ be a connected graph with $|G|=t+1$.
Then $G$ has $(s,t)$-property if and only if
each component of $\overline{G}$ has at least $\gamma+1$ vertices.
\end{lem}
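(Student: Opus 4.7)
The plan is to reduce both directions of the equivalence to a combinatorial statement about the component sizes of $\overline{G}$, via two successive translations: first from ``$K_{a,b}$-minor'' to ``$K_{a,b}$-subgraph'', then from subgraph containment to a subset-sum condition on the components of $\overline{G}$.

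First I would observe that since $|V(G)|=t+1=a+b$, any model of $K_{a,b}$ as a minor of $G$ consists of $a+b$ pairwise disjoint nonempty branch sets sitting inside a vertex set of size $a+b$, which forces each branch set to be a single vertex. Consequently, $G$ is $K_{a,b}$-minor free if and only if $G$ does not contain $K_{a,b}$ as a spanning subgraph.

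Next, $G\supseteq K_{a,b}$ as a subgraph is equivalent to the existence of a partition $V(G)=A\sqcup B$ with $|A|=a$ and $|B|=b$ such that every pair in $A\times B$ is an edge of $G$. Passing to the complement, this is the same as saying that $\overline{G}$ has no edge between $A$ and $B$; equivalently, each component of $\overline{G}$ lies entirely inside $A$ or entirely inside $B$. Hence $G\supseteq K_{a,b}$ if and only if the multiset of component sizes of $\overline{G}$ admits a subset summing to $a$.

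Combining these two reductions, $G$ has the $(s,t)$-property if and only if no nonempty union of components of $\overline{G}$ has total size in $\{1,2,\ldots,\gamma\}$. The ``if'' direction is then immediate: when each component of $\overline{G}$ has at least $\gamma+1$ vertices, every nonempty union of components has total size at least $\gamma+1>\gamma$. For the ``only if'' direction, if some component of $\overline{G}$ has size $c\le\gamma$, that single component is a nonempty union of size $c\in[1,\gamma]$, producing a $K_{c,\,t+1-c}$ subgraph (and hence minor) of $G$ that violates the $(s,t)$-property; note that $c\le\gamma\le \lfloor(t+1)/2\rfloor$ ensures $c\le t+1-c$, so $(c,t+1-c)$ is an admissible pair $(a,b)$.

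The only mildly delicate point is the initial reduction from minors to subgraphs, which relies on the tight constraint $|V(G)|=a+b$; once this is in place, the rest is a routine translation via complementation, and neither spectral tools nor the connectedness of $G$ play any role in the argument.
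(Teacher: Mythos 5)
Your proposal is correct and follows essentially the same route as the paper's (very terse) proof: first observe that the constraint $|G|=t+1=a+b$ collapses every $K_{a,b}$-minor model to singleton branch sets, so the minor condition becomes a spanning-subgraph condition, and then translate that into the statement about component sizes of $\overline{G}$. Your write-up simply fills in the complementation and subset-sum details that the paper dismisses with ``it is easy to see.''
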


\begin{proof}
Since $|G|=t+1$, a $K_{a,b}$-minor is equivalent to a copy of $K_{a,b}$
for any positive integers $a,b$ with $a+b=t+1$ and
$1\leq a\leq \gamma$.
It is easy to see that $G$ has $(s,t)$-property if and only if
each component of $\overline{G}$ has at least $\gamma+1$ vertices.
\end{proof}

\begin{lem}\label{lem3.1}
Let $\beta=\lfloor\frac{t+1}{s+1}\rfloor$ and $G$ be a connected graph with $|G|=t+1$.
If $G$ has $(s,t)$-property, then
$e(G)\leq {t\choose 2}+\beta-1$.
\end{lem}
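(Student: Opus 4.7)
The plan is to pass to the complement $\overline{G}$. Since $|\overline{G}|=t+1$ and $e(G)+e(\overline{G})=\binom{t+1}{2}=\binom{t}{2}+t$, the inequality $e(G)\le\binom{t}{2}+\beta-1$ is equivalent to $e(\overline{G})\ge t-\beta+1$. So the whole task reduces to giving a lower bound on the number of edges in $\overline{G}$.

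First, I would apply Lemma \ref{lem3.0}: since $G$ is connected of order $t+1$ and has $(s,t)$-property, every component of $\overline{G}$ has at least $\gamma+1$ vertices, where $\gamma=\min\{s,\lfloor(t+1)/2\rfloor\}$. If $c$ denotes the number of components of $\overline{G}$, then $c(\gamma+1)\le t+1$, whence
$$c\le \left\lfloor\frac{t+1}{\gamma+1}\right\rfloor.$$

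Second, I would verify by a short case split that $\lfloor(t+1)/(\gamma+1)\rfloor\le\beta$. If $s\le\lfloor(t+1)/2\rfloor$, then $\gamma=s$ and the bound is exactly $\beta$. Otherwise $\gamma=\lfloor(t+1)/2\rfloor$, so $\gamma+1>(t+1)/2$, which forces $\lfloor(t+1)/(\gamma+1)\rfloor=1$; and since $s\le t$ gives $\beta=\lfloor(t+1)/(s+1)\rfloor\ge 1$, the bound still holds.

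Finally, since each component of $\overline{G}$ is connected, a component on $k$ vertices contributes at least $k-1$ edges. Summing over the $c$ components yields
$$e(\overline{G})\ge (t+1)-c\ge (t+1)-\beta,$$
which rearranges to the required $e(G)\le\binom{t}{2}+\beta-1$. No significant obstacle is anticipated; the only delicate point is the two-case verification that $\lfloor(t+1)/(\gamma+1)\rfloor\le\beta$, handled above.
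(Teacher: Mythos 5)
Your proof is correct and follows essentially the same route as the paper: pass to the complement, invoke Lemma~\ref{lem3.0} to get that each component of $\overline{G}$ has at least $\gamma+1$ vertices, bound the number of components $c$ by $\lfloor(t+1)/(\gamma+1)\rfloor\le\beta$ via the same two-case analysis on whether $s\le\lfloor(t+1)/2\rfloor$, and use that a connected $k$-vertex graph has at least $k-1$ edges. The only cosmetic difference is that the paper phrases the argument by passing to an edge-maximal $G$ (so that $\overline{G}$ becomes a forest with the maximal number of components) and then computes $e(\overline{G})=|G|-c$ as an equality, whereas you bound $e(\overline{G})\ge(t+1)-c$ directly without assuming extremality; your version is marginally more self-contained but the mathematical content is identical.
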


\begin{proof}
By Lemma \ref{lem3.0},
each component of $\overline{G}$ has at least $\gamma+1$ vertices.
Now assume that $G$ is edge-maximal and $\overline{G}$ has $c$ components.
Then each component of $\overline{G}$ is a tree and $c$ is maximal.
This implies that $c=\lfloor\frac{t+1}{\gamma+1}\rfloor$ and $e\left(\overline{G}\right)=|G|-c.$
If $s\leq \lfloor\frac{t+1}{2}\rfloor$, then $\gamma=s$ and thus $c=\beta$.
It follows that
\begin{align}\label{a3.0}
e(G)={{t+1}\choose 2}-e\left(\overline{G}\right)={t\choose 2}+c-1={t\choose 2}+\beta-1.
\end{align}
If $s>\lfloor\frac{t+1}{2}\rfloor$,
then $\gamma=\lfloor\frac{t+1}{2}\rfloor$ and $\beta=1$.
We also have $c=\beta$ and hence (\ref{a3.0}) holds.
\end{proof}

The following lemma holds clearly.

\begin{lem}\label{lem3.2}
Let $G$ be a graph with $vw\in E(G)$ and $uw\notin E(G)$.
If $d_G(u)\geq d_G(v)$, then $\pi(G)\prec\pi(G-\{vw\}+\{uw\})$ and $\pi(G)\neq\pi(G-\{vw\}+\{uw\})$.
\end{lem}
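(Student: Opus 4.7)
The plan is to directly compare the degree sequences before and after the edge switch. Set $G' = G - \{vw\} + \{uw\}$. The only vertices whose degrees change are $u$ and $v$: I would compute $d_{G'}(u) = d_G(u) + 1$, $d_{G'}(v) = d_G(v) - 1$, and $d_{G'}(x) = d_G(x)$ for every other vertex. In particular $d_{G'}(w) = d_G(w)$ since $w$ loses the edge to $v$ but gains the edge to $u$; and the hypotheses $vw \in E(G)$, $uw \notin E(G)$ force $u,v,w$ to be pairwise distinct. Summing gives $\sum_x d_{G'}(x) = \sum_x d_G(x)$, so once weak majorization $\pi(G) \prec_w \pi(G')$ is established, full majorization $\pi(G) \prec \pi(G')$ follows from the definition of majorization recalled in the paper.

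To establish the prefix-sum inequalities, write $a = d_G(u) \geq b = d_G(v)$. The multiset of degrees of $G'$ is obtained from that of $G$ by replacing one copy of $a$ by $a+1$ and one copy of $b$ by $b-1$. This is a classical Robin-Hood / T-transform: mass is moved from the smaller value $b$ to the larger value $a$, and the resulting spread-out vector majorizes the original. Concretely, for each $k$ I would run through the three cases (whether the altered coordinates of the sorted sequences both lie in the top $k$ entries, exactly one of them does, or neither does) and verify in each case that $\sum_{i=1}^k \pi(G)_i \leq \sum_{i=1}^k \pi(G')_i$. No delicate estimate is needed, only the observation that decreasing one entry while increasing a larger-or-equal entry by the same amount cannot decrease any top-$k$ partial sum of the sorted vector.

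Finally, to see that $\pi(G) \neq \pi(G')$, I would note that the multiset of degrees strictly changes: the value $a+1$ appears with strictly greater multiplicity in $\pi(G')$ than in $\pi(G)$ (this remains true in the boundary case $a = b$, where the change is $\{a, a\} \mapsto \{a+1, a-1\}$). Hence the sorted sequences differ in at least one coordinate. The only step that requires any care is the case analysis on sorted positions for the prefix inequalities; this is elementary enough to justify the author's remark that the lemma holds clearly, and no additional tool beyond Lemma \ref{lem2.6} / the definition of $\prec$ is needed.
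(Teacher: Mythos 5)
Your proposal is correct, and since the paper itself states Lemma \ref{lem3.2} with only the remark that it ``holds clearly'', you are filling in exactly the argument the authors had in mind: the degree multiset of $G' = G-\{vw\}+\{uw\}$ is obtained from that of $G$ by replacing one copy of $a=d_G(u)$ and one copy of $b=d_G(v)$ (with $a\geq b$) by $a+1$ and $b-1$, all other degrees unchanged (in particular $d_{G'}(w)=d_G(w)$), and this ``spreading'' or reverse pinch is a standard transformation that strictly increases the degree sequence in the majorization order while preserving the total sum. Your argument that $\pi(G)\neq\pi(G')$ is also sound: since $a\geq b$ rules out both $a+1=b$ and $a+1=b-1$, the multiplicity of the value $a+1$ goes up by exactly one, so the multisets differ. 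One tiny pedantic note: $vw\in E(G)$ and $uw\notin E(G)$ force $v\neq w$ and $u\neq v$, but $u\neq w$ is really an implicit standing hypothesis needed for $G+\{uw\}$ to be a simple graph rather than something the stated conditions literally imply; this does not affect the substance of the proof.
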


Recall that $\beta=\lfloor\frac{t+1}{s+1}\rfloor$ and
$H_{s,t}=(\beta-1)K_{1,s}\cup K_{1,\alpha}$, where $\alpha=t-(\beta-1)(s+1)\geq s$.
The following theorem presents a clear characterization
of a $(t+1)$-vertex component of $G^\star-K$, which
is a key subgraph of the extremal graphs.

\begin{thm}\label{thm3.1}
Let $H$ be a component of $G^\star-K$. If $|H|=t+1$, then $\beta\geq2$,
$H\cong \overline{H_{s,t}}$ and $e(H)={t\choose 2}+\beta-1$.
\end{thm}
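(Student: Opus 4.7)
The plan is a three-stage argument: pin down $e(H)$, determine $\overline{H}$ as a forest of the correct shape, then identify it with $H_{s,t}$ via degree-sequence majorization.

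Stage 1 (edge count). The upper bound $e(H)\leq \binom{t}{2}+\beta-1$ is immediate from Lemma~\ref{lem3.1} applied to the connected $(t+1)$-vertex graph $H$ with the $(s,t)$-property. For the matching lower bound I compare $H$ with $H'=\overline{H_{s,t}}$ on the vertex set $V(H)$. Each component of $H_{s,t}$ has at least $s+1\geq \gamma+1$ vertices, so Lemma~\ref{lem3.0} gives the $(s,t)$-property of $H'$, and the edge-maximal case of Lemma~\ref{lem3.1} gives $e(\overline{H_{s,t}})=\binom{t}{2}+\beta-1$. Lemma~\ref{lem2.4} then yields $e(H)\geq \binom{t}{2}+\beta-1$, so in fact $e(H)=\binom{t}{2}+\beta-1$.

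Stage 2 (shape of $\overline{H}$). From the edge count, $e(\overline{H})=t+1-\beta$. Lemma~\ref{lem3.0} forces each component of $\overline{H}$ to have at least $\gamma+1$ vertices, so the number of components $c$ satisfies $c\leq \lfloor(t+1)/(\gamma+1)\rfloor=\beta$, the identity holding in both sub-cases of the definition of $\gamma$. A graph on $t+1$ vertices with $c$ components that contains a cycle has at least $t+2-c$ edges; combined with $c\leq\beta$ this contradicts $e(\overline{H})=t+1-\beta$. Hence $\overline{H}$ is a forest with exactly $\beta$ tree components, each of size $\geq s+1$, summing to $t+1$ vertices.

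Stage 3 (identification and $\beta\geq 2$). I apply Lemma~\ref{lem2.8} with $H'=\overline{H_{s,t}}$, noting that $e(H')=e(H)$ by Stage 1. Since majorization is preserved by complementation on a fixed vertex set, $\pi(H)\prec \pi(\overline{H_{s,t}})$ is equivalent to $\pi(\overline{H})\prec \pi(H_{s,t})$, and this is the combinatorial claim that among all forests on $t+1$ vertices with $\beta$ tree components of sizes $\geq s+1$, the star forest $H_{s,t}=(\beta-1)K_{1,s}\cup K_{1,\alpha}$ is majorization-maximal. I would prove it by two kinds of edge swap of the type in Lemma~\ref{lem3.2}: intra-component swaps that move a leaf of a tree onto the highest-degree vertex of that tree (eventually making each component a star), and inter-component swaps that move a leaf from a smaller star onto the center of the largest star (eventually forcing all but one component down to size $s+1$). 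In each swap the Lemma~\ref{lem3.2} hypothesis $d(u)\geq d(v)$ holds by the choice of targets, $\pi$ strictly increases, and the process terminates at $H_{s,t}$. Once $\pi(\overline{H})=\pi(H_{s,t})=(\alpha,s,\ldots,s,1,\ldots,1)$, the form of this sequence forces each tree component of $\overline{H}$ to be a star of the prescribed size, so $\overline{H}\cong H_{s,t}$ and $H\cong \overline{H_{s,t}}$. Finally, connectedness of $H$ forces $\overline{H_{s,t}}$ to be connected, i.e., $H_{s,t}$ to be disconnected, so $\beta\geq 2$.

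The main obstacle is the majorization claim in Stage 3. Both flavors of edge swap must be chosen so that the hypothesis $uw\notin E$ and $d(u)\geq d(v)$ of Lemma~\ref{lem3.2} holds while the forest structure and the minimum component-size constraint $n_i\geq s+1$ are preserved; strictness of the majorization at every step is needed to conclude that $H_{s,t}$ is the unique maximizer (up to isomorphism) and hence to pin down $\overline{H}$ exactly.
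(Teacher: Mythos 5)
Your proof follows essentially the same route as the paper's: derive $e(H)=\binom{t}{2}+\beta-1$ from Lemmas~\ref{lem3.0}, \ref{lem3.1} and \ref{lem2.4}, conclude that $\overline{H}$ is a forest with $\beta$ tree components of size $\geq s+1$, and then use Lemma~\ref{lem3.2} edge swaps together with Lemma~\ref{lem2.8} to identify $\overline{H}$ with $H_{s,t}$. The paper packages the swap argument as two one-step contradiction claims (each component of $\overline{H}$ is a star; at most one component has more than $s+1$ vertices), while you phrase it as a multi-step majorization climb toward $H_{s,t}$ followed by a single invocation of Lemma~\ref{lem2.8}. The underlying swaps and the underlying Lemma~\ref{lem3.2}/Lemma~\ref{lem2.8} mechanism are identical, and your remark that majorization is preserved under complementation is correct here precisely because $e(H)=e(H')$ forces the degree sums to agree.

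There is, however, one genuine (though easily repairable) circularity in your organization. Stage~1 invokes Lemma~\ref{lem3.0} to certify that $H'=\overline{H_{s,t}}$ has the $(s,t)$-property; but Lemma~\ref{lem3.0} is stated only for \emph{connected} graphs, and $\overline{H_{s,t}}$ is connected exactly when $H_{s,t}$ is disconnected, i.e.\ when $\beta\geq 2$. You establish $\beta\geq 2$ only at the very end of Stage~3, so as written you are tacitly assuming what you prove last; in particular, when $\beta=1$ you have $\overline{H_{s,t}}=K_t\cup K_1$, which is disconnected and to which Lemma~\ref{lem3.0} does not apply. The paper avoids this by proving $\beta\geq 2$ first, comparing $H$ against $K_t\cup K_1$ and using Lemma~\ref{lem2.8}, before $\overline{H_{s,t}}$ is ever introduced. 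Your ordering can be salvaged without rearranging: simply note directly that $K_t\cup K_1$ has the $(s,t)$-property for trivial reasons (any $K_{a,b}$ with $a+b=t+1$ and $a,b\geq 1$ is connected on $t+1$ vertices and hence cannot be a minor of a graph whose components all have at most $t$ vertices), so the Stage~1 edge comparison is valid even at $\beta=1$, and the terminal contradiction with the connectedness of $H$ then delivers $\beta\geq 2$.
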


\begin{proof}
We first show $\beta\geq2$. Suppose that $\beta=1$.
Then by Lemma \ref{lem3.1}, $e(H)\leq {t \choose 2}=e(K_t\cup K_1).$
By Lemma \ref{lem2.4}, $e(H)=e(K_t\cup K_1)$,
since $K_t\cup K_1$ also has $(s,t)$-property.
Furthermore, since $\Delta(H)\leq t-1$ and $\delta(H)\geq1$,
we can see that $\pi(H)\prec\pi(K_t\cup K_1)$ and $\pi(H)\neq\pi(K_t\cup K_1)$,
which contradicts Lemma \ref{lem2.8}.
Thus, $\beta\geq2$ and correspondingly $s\leq \lfloor\frac{t-1}2\rfloor$.
It follows that $\gamma=s$.

Now we characterize the structure of $H$.
By Lemma \ref{lem2.4}, $H$ is edge-maximal, and combining with Lemmas \ref{lem3.0} and \ref{lem3.1},
we have $e(H)={t\choose 2}+\beta-1$ and
$\overline{H}$ is a forest with
$\beta$ components, each of which has at least $\gamma+1$ ($=s+1$) vertices.
We divide the proof into two claims.

\begin{claim}\label{cl3.1}
Each component of $\overline{H}$ is a star.
\end{claim}

\begin{proof}
Suppose that $\overline{H}$ contains a component $T$ which is not a star.
Then, $T$ contains at least two pendant edges $u_1v_1$ and $u_2v_2$, where $v_1,v_2$ are leaves and $u_1\neq u_2$.
Assume without loss of generality that $d_{T}(u_1)\leq d_{T}(u_2)$. Then
$d_{H}(u_1)\geq d_{H}(u_2)$.
By Lemma \ref{lem3.2}, $\pi(H)\prec\pi(H-\{u_2v_1\}+\{u_1v_1\}),$
which contradicts Lemma \ref{lem2.8}.
\end{proof}

\begin{claim}\label{cl3.2}
$\overline{H}\cong H_{s,t}.$
\end{claim}

\begin{proof}
It suffices to show that of $\overline{H}$ has at most one component not isomorphic to $K_{1,s}$.
Suppose that there exist two components $T_1$ and $T_2$ with $|T_2|\geq |T_1|\geq s+2$.
Assume that $u_iv_i\in E(T_i)$ and $v_i$ is a leaf for $i\in\{1,2\}$.
Similar to the proof of Claim \ref{cl3.1}, we can see that $\pi(H)\prec\pi(H-\{u_2v_1\}+\{u_1v_1\}),$
a contradiction.
\end{proof}

Having Claim \ref{cl3.2}, it remains to show that $\overline{H_{s,t}}$ has $(s,t)$-property.
Since $\gamma=s$ and
$H_{s,t}\cong K_{1,\alpha}\cup (\beta-1)K_{1,s}$,
each component of $H_{s,t}$ has at least $\gamma+1$ vertices.
By Lemma \ref{lem3.0}, $\overline{H_{s,t}}$ has $(s,t)$-property.
\end{proof}

\begin{figure}[htp]
\setlength{\unitlength}{0.75pt}
\begin{center}
\begin{picture}(379.2,153.7)
\qbezier(35.5,91.7)(35.5,75.9)(32.6,64.8)\qbezier(32.6,64.8)(29.6,53.7)(25.4,53.7)
\qbezier(25.4,53.7)(21.2,53.7)(18.2,64.8)\qbezier(18.2,64.8)(15.2,75.9)(15.2,91.7)
\qbezier(15.2,91.7)(15.2,107.5)(18.2,118.6)\qbezier(18.2,118.6)(21.2,129.8)(25.4,129.8)
\qbezier(25.4,129.8)(29.6,129.8)(32.6,118.6)\qbezier(32.6,118.6)(35.5,107.5)(35.5,91.7)
\qbezier(127.9,63.8)(124.6,52.2)(120.0,52.2)\qbezier(120.0,52.2)(115.3,52.2)(112.0,63.8)
\qbezier(112.0,63.8)(108.8,75.3)(108.8,91.7)\qbezier(108.8,91.7)(108.8,108.1)(112.0,119.7)
\qbezier(112.0,119.7)(115.3,131.2)(120.0,131.2)\qbezier(120.0,131.2)(124.6,131.2)(127.9,119.7)
\qbezier(127.9,119.7)(131.2,108.1)(131.2,91.7)\qbezier(131.2,91.7)(131.2,75.3)(127.9,63.8)
\put(72.5,89.9){\circle*{4}}\put(90.6,63.1){\circle*{4}}
\put(55.8,63.1){\circle*{4}}\put(25.4,112.4){\circle*{4}}
\put(25.4,69.6){\circle*{4}}\put(120.4,112.4){\circle*{4}}
\put(120.4,68.9){\circle*{4}}\put(26.1,95.0){\circle*{2}}
\put(26.1,86.3){\circle*{2}}\put(26.1,90.6){\circle*{2}}
\put(121.1,96.4){\circle*{2}}\put(121.1,87.7){\circle*{2}}
\put(121.1,92.1){\circle*{2}}
\qbezier(25.4,112.4)(48.9,101.1)(72.5,89.9)\qbezier(72.5,89.9)(48.9,79.8)(25.4,69.6)
\qbezier(72.5,89.9)(96.4,101.1)(120.4,112.4)\qbezier(72.5,89.9)(96.4,79.4)(120.4,68.9)
\qbezier(120.4,31.5)(120.4,26.9)(106.5,23.6)\qbezier(106.5,23.6)(92.7,20.3)(73.2,20.3)
\qbezier(73.2,20.3)(53.7,20.3)(39.9,23.6)\qbezier(39.9,23.6)(26.1,26.9)(26.1,31.5)
\qbezier(26.1,31.5)(26.1,36.2)(39.9,39.5)\qbezier(39.9,39.5)(53.7,42.8)(73.2,42.8)
\qbezier(73.2,42.8)(92.7,42.8)(106.5,39.5)\qbezier(106.5,39.5)(120.3,36.2)(120.4,31.5)
\put(47.9,31.9){\circle*{4}}\put(97.2,31.9){\circle*{4}}
\put(68.2,31.2){\circle*{2}}\put(77.6,31.2){\circle*{2}}
\put(72.5,31.2){\circle*{2}}
\qbezier(55.8,63.1)(51.8,47.5)(47.9,31.9)\qbezier(55.8,63.1)(76.5,47.5)(97.2,31.9)
\qbezier(90.6,63.1)(69.2,47.5)(47.9,31.9)\qbezier(90.6,63.1)(93.9,47.5)(97.2,31.9)
\qbezier(72.5,89.9)(81.6,76.5)(90.6,63.1)
\put(224.8,134.1){\circle*{4}}\put(353.8,134.1){\circle*{4}}
\qbezier(224.8,134.1)(289.3,134.1)(353.8,134.1)
\put(289.3,134.1){\circle*{4}}
\qbezier(379.5,47.1)(379.5,36.8)(372.2,29.4)\qbezier(372.2,29.4)(364.9,22.1)(354.5,22.1)
\qbezier(354.5,22.1)(344.2,22.1)(336.8,29.4)\qbezier(336.8,29.4)(329.5,36.8)(329.5,47.1)
\qbezier(329.5,47.1)(329.5,57.5)(336.8,64.8)\qbezier(336.8,64.8)(344.2,72.1)(354.5,72.1)
\qbezier(354.5,72.1)(364.9,72.1)(372.2,64.8)\qbezier(372.2,64.8)(379.5,57.5)(379.5,47.1)
\qbezier(250.1,47.9)(250.1,37.4)(242.7,29.9)\qbezier(242.7,29.9)(235.2,22.5)(224.8,22.5)
\qbezier(224.8,22.5)(214.3,22.5)(206.8,29.9)\qbezier(206.8,29.9)(199.4,37.4)(199.4,47.8)
\qbezier(199.4,47.8)(199.4,58.3)(206.8,65.8)\qbezier(206.8,65.8)(214.3,73.2)(224.8,73.2)
\qbezier(224.8,73.2)(235.2,73.2)(242.7,65.8)\qbezier(242.7,65.8)(250.1,58.3)(250.1,47.9)
\qbezier(314.0,93.5)(314.0,83.3)(306.7,76.1)\qbezier(306.7,76.1)(299.5,68.8)(289.3,68.8)
\qbezier(289.3,68.8)(279.0,68.8)(271.8,76.1)\qbezier(271.8,76.1)(264.6,83.3)(264.6,93.5)
\qbezier(264.6,93.5)(264.6,103.8)(271.8,111.0)\qbezier(271.8,111.0)(279.0,118.2)(289.3,118.2)
\qbezier(289.3,118.2)(299.5,118.2)(306.7,111.0)\qbezier(306.7,111.0)(314.0,103.8)(314.0,93.5)
\qbezier(289.3,134.1)(282.4,120.7)(275.5,107.3)\qbezier(289.3,134.1)(296.2,120.7)(303.1,107.3)
\qbezier(224.8,134.1)(219.3,98.6)(213.9,63.1)\qbezier(224.8,134.1)(229.5,98.6)(234.2,63.1)
\qbezier(353.8,134.1)(349.8,99.3)(345.8,64.5)\qbezier(353.8,134.1)(358.2,99.3)(362.5,64.5)
\qbezier(240.7,60.2)(255.6,77.2)(270.4,94.3)\qbezier(242.9,54.4)(261.7,65.6)(280.6,76.9)
\qbezier(308.9,93.5)(324.1,76.5)(339.3,59.5)\qbezier(300.2,77.6)(317.9,65.3)(335.7,52.9)
\qbezier(239.3,43.5)(288.9,41.7)(338.6,39.9)\qbezier(237.8,31.9)(288.6,33.7)(339.3,35.5)
\put(68.2,105.2){\makebox(0,0)[tl]{$w$}}\put(93.7,60.5){\makebox(0,0)[tl]{$u_1$}}
\put(35.3,60.5){\makebox(0,0)[tl]{$u_2$}}
\put(132.7,100.8){\makebox(0,0)[tl]{$\overline{K}_a$}}\put(-14.0,100.8){\makebox(0,0)[tl]{$\overline{K}_b$}}
\put(-14,40){\makebox(0,0)[tl]{$\overline{K}_c$}}\put(353.5,147.7){\makebox(0,0)[tl]{$w$}}
\put(216.8,147.7){\makebox(0,0)[tl]{$u_1$}}\put(281.3,147.7){\makebox(0,0)[tl]{$u_2$}}
\put(280.5,98.8){\makebox(0,0)[tl]{$K_a$}}\put(217.4,52.6){\makebox(0,0)[tl]{$K_b$}}
\put(348.0,52.6){\makebox(0,0)[tl]{$K_c$}}\put(58,1.0){\makebox(0,0)[tl]{$H_{a,b,c}$}}
\put(278,1.0){\makebox(0,0)[tl]{$\overline{H_{a,b,c}}$}}
\qbezier(25.4,112.4)(40.6,87.7)(55.8,63.1)\qbezier(25.4,69.6)(40.6,66.3)(55.8,63.1)
\qbezier(120.4,112.4)(105.5,87.7)(90.6,63.1)\qbezier(120.4,68.9)(105.5,66.0)(90.6,63.1)
\end{picture}
\end{center}
\caption{The graph $H_{a,b,c}$ and its complement.}\label{fig3.1}
\end{figure}
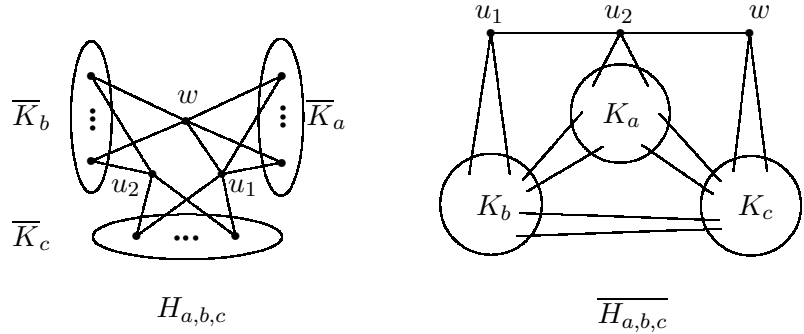

\begin{lem}\label{lem3.3}
Let $\beta\leq2$ and $G$ be a connected graph with $|G|=t+2$. If $G$ has $(s,t)$-property, then
$e(G)\leq{t\choose 2}+2$, and if equality holds,
then $\overline{G}$ is isomorphic to either some $H_{a,b,c}$ (see Fig. \ref{fig3.1})
or the Petersen graph $H^\star$.
\end{lem}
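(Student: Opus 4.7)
The upper bound is an immediate corollary of Lemma~\ref{lem2.2}. Since $\gamma\ge 1$, the pair $(a,b)=(1,t)$ falls under the $(s,t)$-property, so $G$ is a connected $K_{1,t}$-minor-free graph on $n=t+2$ vertices, and Lemma~\ref{lem2.2} gives $e(G)\le \binom{t}{2}+(t+2)-t=\binom{t}{2}+2$ at once.

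For the equality case, assume $e(G)=\binom{t}{2}+2$, equivalently $e(\overline{G})=2t-1$. The key reformulation I would use is that, because $|V(G)|=t+2$ and each forbidden minor $K_{a,b}$ has $a+b=t+1$ vertices, any $K_{a,b}$-minor model in $G$ either uses $t+1$ singleton branch sets (omitting a single vertex of $G$) or uses one branch set of size $2$ and the rest singletons (contracting exactly one edge of $G$). Hence the $(s,t)$-property becomes: for every $v\in V(G)$ and every edge $uv\in E(G)$, neither $G-v$ nor $G/uv$ contains $K_{a,b}$ as a subgraph, for each admissible pair $(a,b)$. Dualizing to $F:=\overline{G}$, this says that every $(a,b)$-bipartition of the appropriate $(t+1)$-vertex set must have at least one $F$-edge (respectively, at least one edge of the non-edge identification of $u$ and $v$) across it.

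With this reformulation in hand, the plan is to combine the tight edge-count $e(G)=\binom{t}{2}+2$ with the Ding--Johnson--Seymour structural characterization behind Lemma~\ref{lem2.2} to reduce $G$ to a short list of candidate shapes---essentially $K_t$ extended by two vertices via prescribed subdivision or branch attachments, together with a sporadic configuration at $t=8$---and then to sieve this list using the bipartite-crossing conditions above for $a=2,\dots,\gamma$. The surviving generic candidates are those whose complement splits into three pieces of sizes $a,b,c$ with $a+b+c=t-1$ glued to a three-vertex hub $\{w,u_1,u_2\}$, precisely the family $H_{a,b,c}$ depicted in Figure~\ref{fig3.1}. The sporadic exception arises at $t=8$, where the unique $(3,5)$-cage on $10$ vertices---the Petersen graph $H^\star$---fits both the edge count and the covering conditions.

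The main obstacle I foresee is handling the edge-contraction case for $a\ge 2$, which couples the local structure of $F$ at every non-edge with the tight global edge budget of $2t-1$; it is this coupling that both forces the three-hub $H_{a,b,c}$ layout in the generic case and isolates the Petersen graph as the unique exception. Conversely, verifying that $\overline{H^\star}$ and each $\overline{H_{a,b,c}}$ really do satisfy the $(s,t)$-property should be routine: for $H_{a,b,c}$ the three-hub structure produces a transparent bipartite covering, while for $H^\star$ the strong vertex- and edge-transitivity reduces the check to a handful of bipartition types.
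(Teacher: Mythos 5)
Your derivation of the upper bound is correct and matches the paper: $K_{1,t}$ falls under the $(s,t)$-property, and Lemma~\ref{lem2.2} gives $e(G)\le\binom{t}{2}+2$ at once. Your reformulation of the $(s,t)$-property on $t+2$ vertices in terms of $G-v$ and $G/uv$ (equivalently, bipartite covering conditions in $\overline{G}$ after deleting a vertex or identifying a non-edge) is also correct and is a legitimate alternative framing to the paper's Lemma~\ref{lem3.0}.

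What you have, however, stops being a proof at exactly the point you yourself flag as "the main obstacle." The paper's equality case is carried entirely by a sequence of concrete structural deductions on $F=\overline{G}$ with $e(F)=2t-1$: that $F$ is connected with diameter $2$ (Claim~\ref{cl3.3}); that a minimum cut set $S$ of $F$ has $2\le |S|\le 3$ (Claim~\ref{cl3.4}); that $S$ dominates the rest, each component of $F-S$ dominates $S$, and the vertices with a unique neighbour in $S$ all lie in one component (Claim~\ref{cl3.5}); and then a case split on $|S|\in\{2,3\}$ that forces $F\cong H_{a,b,c}$ or $F\cong H^\star$ respectively (Claims~\ref{cl3.6},~\ref{cl3.7}). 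None of these deductions appears in your proposal; you replace them with "combine the tight edge-count with the Ding--Johnson--Seymour structural characterization behind Lemma~\ref{lem2.2} to reduce $G$ to a short list of candidate shapes," but Lemma~\ref{lem2.2} as cited gives only an edge bound, not an explicit list of extremal graphs, and even if you imported the full DJS characterization of $K_{1,t}$-minor-free extremal graphs you would still need to run the sieve against $K_{a,b}$-minors for every $2\le a\le\gamma$, which is precisely the coupling you leave unresolved. As written, this is a plan rather than a proof, and the gap is the entire classification of $\overline{G}$ at equality. Note also that for this lemma you are only asked for the forward implication (equality forces $\overline G\in\{H_{a,b,c},H^\star\}$); the verification that these graphs actually satisfy the $(s,t)$-property lives in Theorem~\ref{thm3.2}, so you need not worry about it here, but you do need to produce the forcing argument.
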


\begin{proof}
Since $G$ satisfies $(s,t)$-property, $G$ contains no $K_{1,t}$-minor.
By Lemma \ref{lem2.2}, we have $e(G)\leq{t\choose 2}+2$.
In the following, assume that $e(G)={t\choose 2}+2$.
Then $e\left(\overline{G}\right)=2t-1.$
The proof is divided into five claims.

\begin{claim}\label{cl3.3}
$\overline{G}$ is connected and $diam(\overline{G})=2$,
where $diam(\overline{G})$ is the diameter of $\overline{G}$.
\end{claim}

\begin{proof}
If $\overline{G}$ is not connected or $diam\left(\overline{G}\right)\geq 3$,
then there exist a pair of non-adjacent vertices $u,v$ with no
common neighbor in $\overline{G}$.
This implies that $uv\in E(G)$ and $N_G(u)\cup N_G(v)=V(G)$.
Since $|G|=t+2$, we can conclude that $G$ contains a $K_{1,t}$-minor,
a contradiction with $(s,t)$-property.
\end{proof}

\begin{claim}\label{cl3.4}
Let $S=\{u_1,\ldots,u_{|S|}\}$ be a minimum cut set of $\overline{G}$.
Then $2\leq|S|\leq3$.
\end{claim}

\begin{proof}
Let $d_G(u,v)$ be the distance of $u,v\in V(G)$.
If $|S|=1$,
then $\overline{G}-\{u_1\}$ has at least two components $G_1$ and $G_2$.
Since $G$ is connected, there exists a vertex $v_1$, say $v_1\in V(G_1)$,
with $u_1v_1\notin E\left(\overline{G}\right)$.
Now, $d_{\overline{G}}(v_1,v)\geq3$ for any $v\in V(G_2)$, contradicting Claim \ref{cl3.3}.
Therefore, $|S|\geq2$. Furthermore,
since $\delta\left(\overline{G}\right)\geq|S|$.
Thus, $2e\left(\overline{G}\right)\geq\delta\left(\overline{G}\right)\cdot|G|\geq|S|(t+2).$
Combining with $e\left(\overline{G}\right)=2t-1$, we have $|S|\leq3.$
\end{proof}

Now let $W=V(G)\setminus S$, $W_1=\{w\in W| |N_{\overline{G}}(w)\cap S|=1\}$
and $\overline{G}[W]$ consists of $k$ components $T_1,\ldots,T_k$, where $k\geq2$.

\begin{claim}\label{cl3.5}
$S$ dominates $W$ and each $V(T_i)$ dominates $S$ in $\overline{G}$.
Moreover, $W_1$ belongs to a single component $T_i$.
\end{claim}

\begin{proof}
Firstly, suppose that $N_{\overline{G}}(w_0)\cap S=\varnothing$ for some $w_0\in W$.
Without loss of generality, assume that $w_0\in V(T_1)$. Then
$d_{\overline{G}}(w_0,w)\geq 3$ for any $w\in V(T_2)$, a contradiction.
Hence, $S$ dominates $W$ in $\overline{G}$.
Secondly, if a $V(T_i)$ does not dominate $S$ in $\overline{G}$,
say $u_1\notin\cup_{w\in V(T_i)}N_{\overline{G}}(w)$, then $S\setminus\{u_1\}$ is a cut set,
which contradicts the minimality of $|S|$.

Now we show that $W_1\subseteq V(T_i)$ for some $i$. Suppose to the contrary that $w_i\in W_1\cap V(T_i)$ for $i=1,2$.
Since $diam(\overline{G})=2$, $w_1$ and $w_2$ have a unique and common neighbor, say $u_1$, in $S$.
It follows that $wu_1\in E\left(\overline{G}\right)$ for any $w\in W$
(Otherwise, either $d_{\overline{G}}(w,w_1)\geq 3$ or $d_{\overline{G}}(w,w_2)\geq 3$).
Therefore, $d_{\overline{G}}(u_1)\geq|W|.$
If $|S|=2$, then
$$e\left(\overline{G}-\{u_1\}\right)\leq e\left(\overline{G}\right)-|W|=(2t-1)-t=t-1.$$
This implies that $\overline{G}-\{u_1\}$ is not connected, which contradicts $|S|=2$.
If $|S|=3$, then $d_{\overline{G}}(u_2)\geq \delta\left(\overline{G}\right)\geq|S|=3.$
It follows that $$e\left(\overline{G}-\{u_1,u_2\}\right)\leq e\left(\overline{G}\right)-(|W|+2)=(2t-1)-(t+1)=t-2.$$
Then $\{u_1,u_2\}$ is a cut set of $\overline{G}$, contradicting $|S|=3$.
So, the claim holds.
\end{proof}

Now we may assume without loss of generality that $W_1\subseteq V(T_1).$
Note that \begin{align}\label{a3.1}
e\left(\overline{G}\right)=
e_{\overline{G}}(S)+\sum_{i=1}^ke_{\overline{G}}(V(T_i))+\sum_{i=1}^ke_{\overline{G}}(V(T_i),S).
\end{align}

\begin{claim}\label{cl3.6}
If $|S|=2$, then $\overline{G}$ is isomorphic to some $H_{a,b,c}$ (see Fig. \ref{fig3.1}).
\end{claim}

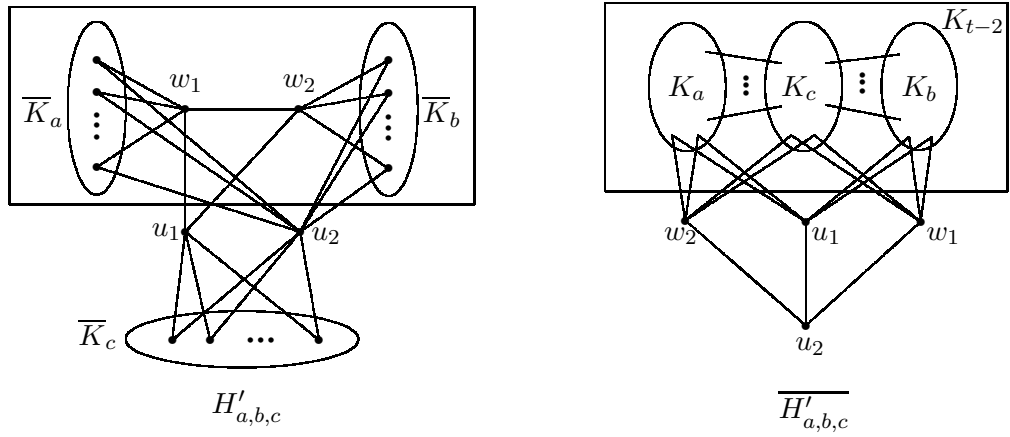
\begin{figure}[htp]
\setlength{\unitlength}{0.75pt}
\begin{center}
\begin{picture}(500.3,213.2) \qbezier(60.2,160.2)(60.2,142.2)(55.9,129.5)
\qbezier(55.9,129.5)(51.7,116.7)(45.7,116.7)\qbezier(45.7,116.7)(39.7,116.7)(35.4,129.5)
\qbezier(35.4,129.5)(31.2,142.2)(31.2,160.2)\qbezier(31.2,160.2)(31.2,178.2)(35.4,191.0)
\qbezier(35.4,191.0)(39.7,203.7)(45.7,203.7)\qbezier(45.7,203.7)(51.7,203.7)(55.9,191.0)
\qbezier(55.9,191.0)(60.2,178.2)(60.2,160.2)\qbezier(205.9,159.5)(205.9,141.5)(201.7,128.7)
\qbezier(201.7,128.7)(197.4,116.0)(191.4,116.0)\qbezier(191.4,116.0)(185.4,116.0)(181.1,128.7)
\qbezier(181.1,128.7)(176.9,141.5)(176.9,159.5)\qbezier(176.9,159.5)(176.9,177.5)(181.1,190.3)
\qbezier(181.1,190.3)(185.4,203.0)(191.4,203.0)\qbezier(191.4,203.0)(197.4,203.0)(201.7,190.3)
\qbezier(201.7,190.3)(205.9,177.5)(205.9,159.5)\put(89.9,159.5){\circle*{4}}
\put(146.5,159.5){\circle*{4}}\qbezier(89.9,159.5)(118.2,159.5)(146.5,159.5)
\put(45.7,184.2){\circle*{4}}\put(45.7,130.5){\circle*{4}}
\put(45.7,168.2){\circle*{4}}\put(191.4,184.2){\circle*{4}}
\put(191.4,129.8){\circle*{4}}\put(191.4,167.5){\circle*{4}}
\put(191.4,155.9){\circle*{2}}\put(191.4,145.0){\circle*{2}}
\put(191.4,150.1){\circle*{2}}\put(45.7,156.6){\circle*{2}}
\put(45.7,145.7){\circle*{2}}\put(45.7,151.5){\circle*{2}}
\put(89.9,97.9){\circle*{4}}\put(147.2,97.9){\circle*{4}}
\qbezier(89.9,159.5)(89.9,128.7)(89.9,97.9)\qbezier(146.5,159.5)(118.2,128.7)(89.9,97.9)
\qbezier(45.7,184.2)(67.8,171.8)(89.9,159.5)\qbezier(45.7,168.2)(67.8,163.9)(89.9,159.5)
\qbezier(89.9,159.5)(67.8,145.0)(45.7,130.5)\qbezier(146.5,159.5)(168.9,171.8)(191.4,184.2)
\qbezier(146.5,159.5)(168.9,163.5)(191.4,167.5)\qbezier(146.5,159.5)(168.9,144.6)(191.4,129.8)
\qbezier(45.7,184.2)(96.4,141.0)(147.2,97.9)\qbezier(45.7,168.2)(96.4,133.0)(147.2,97.9)
\qbezier(45.7,130.5)(96.4,114.2)(147.2,97.9)\qbezier(191.4,184.2)(169.3,141.0)(147.2,97.9)
\qbezier(191.4,167.5)(169.3,132.7)(147.2,97.9)\qbezier(191.4,129.8)(169.3,113.8)(147.2,97.9) \qbezier(176.2,43.9)(176.2,38.0)(159.2,33.9)\qbezier(159.2,33.9)(142.2,29.7)(118.2,29.7)
\qbezier(118.2,29.7)(94.2,29.7)(77.2,33.9)\qbezier(77.2,33.9)(60.2,38.0)(60.2,43.9)
\qbezier(60.2,43.9)(60.2,49.7)(77.2,53.9)\qbezier(77.2,53.9)(94.2,58.0)(118.2,58.0)
\qbezier(118.2,58.0)(142.2,58.0)(159.2,53.9)\qbezier(159.2,53.9)(176.2,49.7)(176.2,43.9)
\put(83.4,43.5){\circle*{4}}\put(156.6,43.5){\circle*{4}}
\put(102.2,43.5){\circle*{4}}\put(122.5,43.5){\circle*{2}}
\put(132.7,43.5){\circle*{2}}\put(127.6,43.5){\circle*{2}}
\qbezier(89.9,97.9)(86.6,70.7)(83.4,43.5)\qbezier(89.9,97.9)(96.1,70.7)(102.2,43.5)
\qbezier(89.9,97.9)(123.3,70.7)(156.6,43.5)\qbezier(147.2,97.9)(115.3,70.7)(83.4,43.5)
\qbezier(147.2,97.9)(124.7,70.7)(102.2,43.5)\qbezier(147.2,97.9)(151.9,70.7)(156.6,43.5)
\put(9.0,166.0){\makebox(0,0)[tl]{$\overline{K}_a$}}
\put(208.4,166.0){\makebox(0,0)[tl]{$\overline{K}_b$}}
\put(36.6,54){\makebox(0,0)[tl]{$\overline{K}_c$}}
\put(2.2,211.0){\line(1,0){232.0}\line(0,-1){99.3}}\put(234.2,111.7){\line(-1,0){232.0}\line(0,1){99.3}}
\put(103,18.0){\makebox(0,0)[tl]{$H'_{a,b,c}$}} \qbezier(359.6,170.7)(359.6,157.4)(354.0,147.9)
\qbezier(354.0,147.9)(348.3,138.5)(340.4,138.5)
\qbezier(340.4,138.5)(332.4,138.5)(326.8,147.9)\qbezier(326.8,147.9)(321.2,157.4)(321.2,170.7)
\qbezier(321.2,170.7)(321.2,184.1)(326.8,193.6)\qbezier(326.8,193.6)(332.4,203.0)(340.4,203.0)
\qbezier(340.4,203.0)(348.3,203.0)(354.0,193.6)\qbezier(354.0,193.6)(359.6,184.1)(359.6,170.7) \qbezier(417.6,171.1)(417.6,157.6)(412.0,148.0)\qbezier(412.0,148.0)(406.3,138.5)(398.4,138.5)
\qbezier(398.4,138.5)(390.4,138.5)(384.8,148.0)\qbezier(384.8,148.0)(379.2,157.6)(379.2,171.1)
\qbezier(379.2,171.1)(379.2,184.6)(384.8,194.2)\qbezier(384.8,194.2)(390.4,203.7)(398.4,203.7)
\qbezier(398.4,203.7)(406.3,203.7)(412.0,194.2)\qbezier(412.0,194.2)(417.6,184.6)(417.6,171.1)
\qbezier(474.9,171.1)(474.9,157.9)(469.4,148.5)\qbezier(469.4,148.5)(463.8,139.2)(456.0,139.2)
\qbezier(456.0,139.2)(448.2,139.2)(442.7,148.5)\qbezier(442.7,148.5)(437.2,157.9)(437.2,171.1)
\qbezier(437.2,171.1)(437.2,184.3)(442.7,193.7)\qbezier(442.7,193.7)(448.2,203.0)(456.0,203.0)
\qbezier(456.0,203.0)(463.8,203.0)(469.4,193.7)\qbezier(469.4,193.7)(474.9,184.3)(474.9,171.1)
\qbezier(349.5,188.5)(368.3,185.6)(387.2,182.7)
\qbezier(350.9,154.4)(369.0,157.7)(387.2,161.0)\qbezier(409.6,182.7)(427.8,184.9)(445.9,187.1)
\qbezier(409.6,161.7)(428.1,158.1)(446.6,154.4)\put(399.5,103.0){\circle*{4}}
\put(399.5,50.8){\circle*{4}}\qbezier(399.5,103.0)(399.5,76.9)(399.5,50.8)
\put(456.8,103.0){\circle*{4}}\qbezier(456.8,103.0)(428.1,76.9)(399.5,50.8)
\put(339.3,103.7){\circle*{4}}\qbezier(339.3,103.7)(369.4,77.2)(399.5,50.8)
\qbezier(332.8,146.5)(336.0,125.1)(339.3,103.7)\qbezier(345.8,146.5)(342.6,125.1)(339.3,103.7)
\qbezier(451.0,145.7)(453.9,124.3)(456.8,103.0)\qbezier(462.6,145.7)(459.7,124.3)(456.8,103.0)
\qbezier(332.8,146.5)(366.1,124.7)(399.5,103.0)\qbezier(462.6,145.7)(431.0,124.3)(399.5,103.0)
\qbezier(345.8,146.5)(372.7,124.7)(399.5,103.0)\qbezier(451.0,145.7)(425.2,124.3)(399.5,103.0)
\put(81.9,176.9){\makebox(0,0)[tl]{$w_1$}}\put(138.5,176.9){\makebox(0,0)[tl]{$w_2$}}
\put(73.2,102.2){\makebox(0,0)[tl]{$u_1$}}\put(152.8,100.1){\makebox(0,0)[tl]{$u_2$}}
\put(330.6,176.2){\makebox(0,0)[tl]{$K_a$}}\put(447.3,176.2){\makebox(0,0)[tl]{$K_b$}}
\put(387.2,176.2){\makebox(0,0)[tl]{$K_c$}}\put(459.7,100.2){\makebox(0,0)[tl]{$w_1$}}
\put(328.8,100.2){\makebox(0,0)[tl]{$w_2$}}\put(402.4,100.2){\makebox(0,0)[tl]{$u_1$}}
\qbezier(392.2,146.5)(365.8,125.1)(339.3,103.7)\qbezier(403.8,146.5)(371.6,125.1)(339.3,103.7)
\qbezier(392.2,146.5)(424.5,124.7)(456.8,103.0)\qbezier(403.8,146.5)(430.3,124.7)(456.8,103.0)
\put(394.4,44.5){\makebox(0,0)[tl]{$u_2$}}\put(386.0,18.0){\makebox(0,0)[tl]{$\overline{H'_{a,b,c}}$}}
\put(299.4,213.2){\line(1,0){200.8}\line(0,-1){95.0}}\put(500.3,118.2){\line(-1,0){200.8}\line(0,1){95.0}}
\put(467,210.2){\makebox(0,0)[tl]{$K_{t-2}$}}\put(427.8,176.9){\circle*{2}}
\put(427.8,168.9){\circle*{2}}\put(427.8,173.3){\circle*{2}}
\put(369.8,175.5){\circle*{2}}\put(369.8,167.5){\circle*{2}}
\put(369.8,171.8){\circle*{2}}
\end{picture}
\end{center}
\caption{The graph $H'_{a,b,c}$ and its complement.}\label{fig3.2}
\end{figure}

\begin{proof}
Now, $|W|=t$. Since $W_1\subseteq V(T_1),$ by (\ref{a3.1}) we have
$$e\left(\overline{G}\right)\geq 2|T_1|-1+2\left(|W|-|T_1|\right)=2t-1.$$
Recall that $e\left(\overline{G}\right)=2t-1.$
We can see that $e_{\overline{G}}(S)=0$, $T_1$ is a tree with $V(T_1)=W_1$,
and $W\setminus W_1$ is an independent set of $\overline{G}$.

We shall further show that $T_1$ is a star.
Since $diam(\overline{G})=2$ and $V(T_1)=W_1$, we can observe that
$w_1$ and $w_2$ have a unique and common neighbor in $S$
for any $w_1,w_2\in V(T_1)$ with $d_{T_1}(w_1,w_2)\geq 3$.
Thus, if $diam(T_1)\geq 5$, then $V(T_1)$ dominates exactly one vertex of $S$ in $\overline{G}$,
contradicting Claim \ref{cl3.5}. If $diam(T_1)=4$,
then all vertices in $V(T_1)$ but the central vertex are adjacent to a vertex (say $u_2$) of $S$.
But now, we can find a leaf $w$ with $d_{\overline{G}}(w,u_1)=3$, a contradiction.
If $diam(T_1)=3$, that is, $T_1$ is a double star,
then we can similarly see that
all leaves have a common neighbor (say $u_2$) in $S$,
and two central vertices $w_1$ and $w_2$ are adjacent to $u_1$.
It follows that $\overline{G}\cong H'_{a,b,c}$ for some $a,b,c$ with $a+b+c=t-2$ (see Fig. \ref{fig3.2}).
If $c\leq \gamma-1$, we contract the edge $u_1u_2$ in $G$ and call the new vertex $u$ in
the resulting graph, then we get a complete bipartite subgraph with bipartite partition $\left\langle V(K_c)\cup \{u\}, V(K_a)\cup V(K_b)\cup \{w_1,w_2\}\right\rangle$. This implies that $G$ contains a $K_{c+1,a+b+2}$-minor, a contradiction with $(s,t)$-property.
So, $c\geq \gamma$. By symmetry, we also have $a,b\geq \gamma.$
Therefore, $t-2=a+b+c\geq 3\gamma$.
This implies that $\gamma\leq \lfloor\frac{t-2}3\rfloor$ and thus $s\leq \lfloor\frac{t-2}3\rfloor$, which contradicts $\beta=\lfloor\frac{t+1}{s+1}\rfloor\leq2$.

Now we conclude that $T_1$ is a star.
Let $w$ be the central vertex of $T_1$.
Without loss of generality, assume that $wu_1\in E(\overline{G})$.
Then  $\overline{G}\cong H_{a,b,c}$ for some $a,b,c$ with $a+b+c=t-1$ (see Fig. \ref{fig3.1}),
as desired.
\end{proof}

\begin{claim}\label{cl3.7}
If $|S|=3$, then $\overline{G}$ is isomorphic to the Petersen graph $H^\star$ (see Fig. \ref{fig3.3}).
\end{claim}

\begin{figure}[htp]
\setlength{\unitlength}{0.75pt}
\begin{center}
\begin{picture}(448.1,161.7)
\put(30.5,146.5){\circle*{4}}\put(107.3,146.5){\circle*{4}}
\qbezier(30.5,146.5)(68.9,146.5)(107.3,146.5)
\put(0.0,88.5){\circle*{4}}\qbezier(30.5,146.5)(15.2,117.5)(0.0,88.5)
\put(29.7,29.0){\circle*{4}}
\qbezier(0.0,88.5)(14.9,58.7)(29.7,29.0)
\put(113.1,29.0){\circle*{4}}
\qbezier(29.7,29.0)(71.4,29.0)(113.1,29.0)
\put(138.5,87.0){\circle*{4}}
\qbezier(107.3,146.5)(122.9,116.7)(138.5,87.0)\qbezier(138.5,87.0)(125.8,58.0)(113.1,29.0)
\qbezier(30.5,146.5)(111.7,97.9)(113.1,29.0)\qbezier(107.3,146.5)(29.7,96.4)(29.7,29.0)
\qbezier(0.0,88.5)(73.2,47.9)(138.5,87.0)\put(52.9,97.2){\circle*{4}}
\put(71.1,68.2){\circle*{4}}\put(89.2,95.7){\circle*{4}}
\put(71.1,87.0){\circle*{4}}\qbezier(52.9,97.2)(62.0,92.1)(71.1,87.0)
\qbezier(71.1,87.0)(71.1,77.6)(71.1,68.2)\qbezier(71.1,87.0)(80.1,91.4)(89.2,95.7)
\put(237.1,126.9){\circle*{4}}\put(352.4,126.9){\circle*{4}}
\put(294.4,126.9){\circle*{4}}\put(237.8,45.7){\circle*{4}}
\put(351.6,45.7){\circle*{4}}\put(294.4,45.7){\circle*{4}}
\qbezier(237.1,128.3)(295.1,161.7)(352.4,128.3)\qbezier(237.1,45.7)(295.1,13.1)(352.4,45.7)
\put(411.1,126.9){\circle*{4}}\put(411.1,45.7){\circle*{4}}
\qbezier(411.1,126.9)(448.1,87.0)(411.1,45.7)\put(411.1,87.7){\circle*{4}}
\qbezier(411.1,126.9)(411.1,107.3)(411.1,87.7)\qbezier(411.1,87.7)(411.1,66.7)(411.1,45.7)
\qbezier(294.4,127.6)(352.4,161.7)(409.6,127.6)\qbezier(293.6,45.0)(353.8,11.6)(409.6,45.0)
\qbezier(237.1,126.9)(294.4,86.3)(351.6,45.7)\qbezier(237.1,126.9)(265.7,86.3)(294.4,45.7)
\qbezier(294.4,126.9)(266.1,86.3)(237.8,45.7)\qbezier(294.4,126.9)(294.4,86.3)(294.4,45.7)
\qbezier(294.4,126.9)(323.0,86.3)(351.6,45.7)\qbezier(352.4,126.9)(323.4,86.3)(294.4,45.7)
\qbezier(352.4,126.9)(295.1,86.3)(237.8,45.7)\qbezier(352.4,126.9)(381.7,126.9)(411.1,126.9)
\qbezier(294.4,45.7)(352.7,86.3)(411.1,126.9)\qbezier(237.8,45.7)(324.4,86.3)(411.1,126.9)
\qbezier(237.1,126.9)(324.1,107.3)(411.1,87.7)\qbezier(352.4,126.9)(381.7,107.3)(411.1,87.7)
\qbezier(411.1,87.7)(381.4,66.7)(351.6,45.7)\qbezier(411.1,87.7)(324.4,66.7)(237.8,45.7)
\qbezier(294.4,126.9)(352.7,86.3)(411.1,45.7)\qbezier(237.1,126.9)(324.1,86.3)(411.1,45.7)
\qbezier(351.6,45.7)(381.4,45.7)(411.1,45.7)\put(206.6,87.0){\circle*{4}}
\qbezier(237.1,126.9)(221.9,106.9)(206.6,87.0)\qbezier(206.6,87.0)(222.2,66.3)(237.8,45.7)
\qbezier(206.6,87.0)(250.5,106.9)(294.4,126.9)\qbezier(206.6,87.0)(250.5,66.3)(294.4,45.7)
\qbezier(206.6,87.0)(279.5,106.9)(352.4,126.9)\qbezier(206.6,87.0)(279.1,66.3)(351.6,45.7)
\put(319.0,10.0){\makebox(0,0)[tl]{$\overline{H^\star}$}}\put(67.4,10.0){\makebox(0,0)[tl]{$H^\star$}}
\put(34.9,101.9){\makebox(0,0)[tl]{$u_1$}}\put(92.9,101.9){\makebox(0,0)[tl]{$u_2$}}
\put(63.9,62.9){\makebox(0,0)[tl]{$u_3$}}
\end{picture}
\caption{The Petersen graph $H^\star$ and its complement.}\label{fig3.3}
\end{center}
\end{figure}
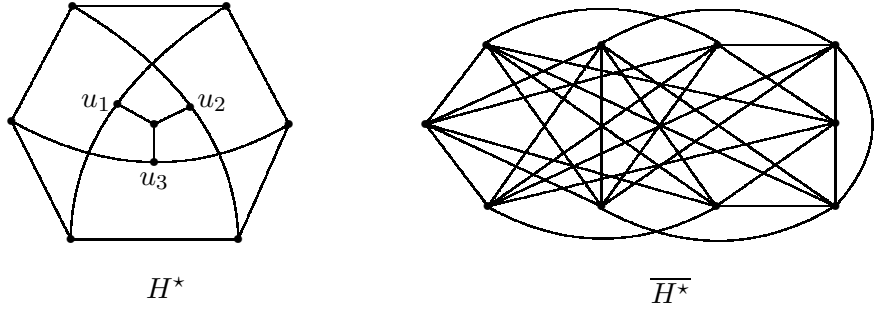

\begin{proof}
Now $\delta\left(\overline{G}\right)\geq |S|=3$. So, all leaves of $T_1$, if exist, belong to $W\setminus W_1$.
Hence,
\begin{align}\label{a3.00}
e_{\overline{G}}(V(T_1))+e_{\overline{G}}(V(T_1),S)\geq 2|T_1|,
\end{align}
and if equality holds, then $T_1$
is a cycle with $V(T_1)=W_1$.

On the other hand, by Claim \ref{cl3.5}, $V(T_1)$ dominates $S$ in $\overline{G}$, which implies that
each vertex of $W\setminus V(T_1)$ also dominates $S$ in $\overline{G}$
(Otherwise, we can find $w_1\in V(T_1)$ and $w_2\in W\setminus V(T_1)$ with $d_{\overline{G}}(w_1,w_2)\geq3$).
Therefore,
$e_{\overline{G}}\left(W\setminus V(T_1),S\right)=3(|W|-|T_1|)$.
Combining with (\ref{a3.1}) and (\ref{a3.00}), we have
$$e\left(\overline{G}\right)\geq e_{\overline{G}}(S)+2|T_1|+3(|W|-|T_1|)\geq3|W|-|T_1|\geq 2|W|+1=2t-1.$$
Since $e\left(\overline{G}\right)=2t-1$, we have
$e_{\overline{G}}(S)=0$, $T_1\cong C_{|W_1|}$ and $|W|=|T_1|+1.$

Note that $\delta\left(\overline{G}\right)\geq 3$.
Each vertex of $S$ has at least two neighbors in $V(T_1)$.
It follows that $|T_1|\geq6$.
Furthermore,
any two vertices with distance at least 3 in $T_1$ have a common neighbor in $S$.
If $|T_1|\geq 7$, then all vertices in $T_1$ have a common neighbor in $S$, contradicting Claim \ref{cl3.5}.
So, $T_1\cong C_6$ and every pair of vertices with distance 3 in $T_1$ have a common neighbor in $S$.
Thus, $\overline{G}\cong H^\star$ (see Fig. \ref{fig3.3}).
\end{proof}

Combining with Claim \ref{cl3.6} and Claim \ref{cl3.7},
the proof of Lemma \ref{lem3.3} is completed.
\end{proof}

We now use $\mathbb{H}_i$, $\mathbb{H}_{>i}$ and $\mathbb{H}_{<i}$
to denote the family of components in $G^\star-K$ with order $i$,
greater than $i$ and less than $i$, respectively.

\begin{lem}\label{lem3.4}
$\mathbb{H}_{>t+3}=\varnothing.$
\end{lem}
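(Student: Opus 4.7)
My plan is to assume for contradiction that some component $H$ of $G^\star-K$ has $|H|=m\ge t+4$, and to exhibit a competitor $H'$ on the same vertex set whose substitution into $G^\star$ strictly increases the spectral radius while preserving $K_{s,t}$-minor freeness. Since $H$ has $(s,t)$-property, it is in particular connected and $K_{1,t}$-minor free, so Lemma \ref{lem2.2} gives $e(H)\le \binom{t}{2}+m-t$. Write $m=pt+q$ with $1\le q\le t$, and let $H'$ be a copy of $pK_t\cup K_q$ placed on $V(H)$. Each component of $H'$ has at most $t$ vertices and so contains no $K_{a,b}$-minor with $a+b=t+1$; hence $H'$ has $(s,t)$-property, and $G':=G^\star-E(H)+E(H')$ remains $K_{s,t}$-minor free.

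Next I would verify the combinatorial inequality $e(H')\ge e(H)+1$. A direct computation yields
$$e(H')-e(H)\ge (p-1)\frac{t(t-3)}{2}+\frac{q(q-3)}{2}.$$
Since $m\ge t+4$ forces either $p=1$ with $q\ge 4$, or $p\ge 2$ with arbitrary $q\in[1,t]$, a short case check (using the standing hypothesis $t\ge 4$) shows $e(H')\ge e(H)+1$ in both regimes; moreover, for $m\ge 2t$ the gap grows linearly, $e(H')-e(H)=\Omega(m)$.

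The spectral step proceeds along the lines of Lemma \ref{lem2.4}. With $X_0=\sum_{v\in K}x_v$, $x_1=\max_{v\in V(H)}x_v$, $x_2=\min_{v\in V(H)}x_v$, the bounds $x_1<X_0/(\rho-t)$ and $x_2\ge X_0/\rho$ from (\ref{a1}) remain valid since $\Delta(H)<t$. Then
$$\rho(G')-\rho\ge 2\sum_{uv\in E(H')}x_ux_v-2\sum_{uv\in E(H)}x_ux_v\ge 2e(H')x_2^2-2e(H)x_1^2,$$
which is positive as soon as $e(H')(\rho-t)^2>e(H)\rho^2$, equivalently $e(H')-e(H)>e(H)\cdot t(2\rho-t)/(\rho-t)^2$. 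Using $\rho=\Omega(\sqrt{n})$ and $e(H)=O(m)$, the right-hand side is $O(tm/\sqrt{n})$.

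The main obstacle is that Lemma \ref{lem2.4} as stated requires $|H|\le N$ for a fixed constant $N$, whereas here $m$ could a priori grow with $n$. I would overcome this by splitting into two regimes: when $m$ is bounded by a constant, the inequality $e(H')-e(H)\ge 1$ already dominates the vanishing error $O(tm/\sqrt{n})=o(1)$; when $m$ is large, the linear gap $e(H')-e(H)=\Omega(m)$ dominates the same error $O(tm/\sqrt{n})=o(m)$ for $n$ sufficiently large. In either case $\rho(G')>\rho$, contradicting the extremality of $G^\star$ and completing the proof.
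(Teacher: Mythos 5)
Your proposal is correct and follows essentially the same path as the paper's proof: substitute $H$ by $pK_t\cup K_q$, bound $e(H)$ via Lemma~\ref{lem2.2}, observe the edge gain, and then push the spectral comparison to a contradiction while handling the fact that Lemma~\ref{lem2.4} only covers bounded $|H|$. The paper makes the same split but in a more concrete way: for $p\le 7$ it invokes Lemma~\ref{lem2.4} directly (since $|H|\le 8t$ is a constant), and for $p\ge 8$ it replaces your additive linear gap by the clean multiplicative bound $e(H)\le\frac{4}{5}e(H')$ so that Claim~\ref{cl2.1} with fixed constants $1>\frac{4}{5}$ applies immediately; your version with the $\Omega(m)$ gap beating an $O(tm/\sqrt n)$ error is correct but requires slightly more care to state uniformly in $m$.
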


\begin{proof}
Suppose to the contrary that $H\in \mathbb{H}_{>t+3}$. By Lemma \ref{lem2.2},
\begin{align}\label{a3.3}
e(H)\leq {t\choose2}+|H|-t.
\end{align}
Assume that $|H|=pt+q$, where $p\geq 1$ and $1\leq q\leq t$,
and let $H'\cong pK_t\cup K_q$ with $V(H')=V(H)$.
Clearly, $H'$ satisfies $(s,t)$-property.
By (\ref{a3.3}), we can see that
$$e(H)\leq{t\choose2}+(pt+q)-t<p{t\choose 2}+{q\choose 2}=e(H')$$
for $|H|>t+3$ and $t\geq 4$.
If $p\leq7$, then $|H|\leq 8t$ (a constant). By Lemma \ref{lem2.4},
we have $e(H')\leq e(H)$, a contradiction.
Now assume that $p\geq8.$ Then by (\ref{a3.3}),
\begin{align}\label{a3.4}
e(H)\leq {t\choose2}+pt<\frac {4p}5{t\choose 2}\leq\frac 45e(H').
\end{align}
Now let $G'=G^\star-E(H)+E(H')$ and $\rho'=\rho(G')$. Then
\begin{eqnarray*}
\rho'-\rho\geq X^T(A(G')-A(G^\star))X
=\sum\limits_{uv\in E(H')}2x_ux_v-\sum\limits_{uv\in E(H)}2x_ux_v.
\end{eqnarray*}
Recall that $x_1=\max_{v\in V(H)}x_v$ and $x_2=\min_{v\in V(H)}x_v.$
By (\ref{a3.4}) and Claim \ref{cl2.1},
$$\rho'-\rho\geq 2e(H')x^2_2-2e(H)x^2_1>2e(H')(x^2_2-\frac 45x^2_1)>0,$$
a contradiction.
Thus we have $\mathbb{H}_{>t+3}=\varnothing.$
\end{proof}

\begin{lem}\label{lem3.5}
$\mathbb{H}_t=O(\frac nt)$.
\end{lem}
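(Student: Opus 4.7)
The plan is to reduce Lemma \ref{lem3.5} to a purely combinatorial counting argument. First I would invoke Lemma \ref{lem2.1}, which tells us that the dominating clique $K$ has cardinality $|K| = s-1$, and therefore $|V(G^\star - K)| = n - s + 1$. Next I would observe that, by definition, the members of $\mathbb{H}_t$ are distinct components of $G^\star - K$ and are therefore pairwise vertex-disjoint subgraphs of $G^\star - K$, each contributing exactly $t$ vertices.

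Summing the vertex counts of the components in $\mathbb{H}_t$ then yields the inequality
\[
t\cdot|\mathbb{H}_t| \;\leq\; |V(G^\star - K)| \;=\; n - s + 1,
\]
from which $|\mathbb{H}_t| \leq (n-s+1)/t = O(n/t)$ follows at once. This is the entire argument.

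I do not foresee any substantive obstacle here: the bound is forced by the pigeonhole principle applied to the vertex set of $G^\star - K$, and requires neither the extremality of $G^\star$ nor any spectral input. In particular, none of the edge-maximality lemma (Lemma \ref{lem2.4}), the degree-sum lemma (Lemma \ref{lem2.5}), the local degree-sequence majorization lemma (Lemma \ref{lem2.8}), nor the structural characterizations of Theorem \ref{thm3.1} and Lemmas \ref{lem3.3}--\ref{lem3.4} is used for this bound. These deeper tools will instead be needed in the companion estimates that control $|\mathbb{H}_i|$ for $i \neq t$, which together with this lemma are what ultimately pin down the global structure of $G^\star - K$.
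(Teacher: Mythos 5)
Your reading of the lemma inverts its intended direction. Although it is phrased with $O(\cdot)$-notation, the actual content of Lemma \ref{lem3.5} (as the paper's proof and its downstream uses make clear) is a \emph{lower} bound: $|\mathbb{H}_t|=\Theta(n/t)$, i.e., all but a bounded number of vertices of $G^\star-K$ lie in components of order exactly $t$. This is precisely what is invoked later — in Claim \ref{cl3.8} (``By Lemma \ref{lem3.5}, $G^\star-K$ contains a component $H_0$ isomorphic to $K_t$'') and in the proof of Lemma \ref{lem4.3} (where a disjoint union of $|H_1|$ copies of $K_t$ is extracted). The trivial pigeonhole bound $t\cdot|\mathbb{H}_t|\le n-s+1$ that you prove gives no such conclusion: it is consistent with $\mathbb{H}_t=\varnothing$.

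The paper's proof therefore does something quite different. Having established $\mathbb{H}_{>t+3}=\varnothing$ (Lemma \ref{lem3.4}), it shows that for every $i\le t+3$ with $i\ne t$ one has $|\mathbb{H}_i|<t$. Each of these bounds is obtained by an exchange/contradiction argument that crucially uses the local edge-maximality Lemma \ref{lem2.4}: one assumes $t$ components of order $i\ne t$ exist, forms their union $\mathbb{F}$, replaces it by $iK_t$ (or another graph with $(s,t)$-property on the same vertex set but strictly more edges, using Lemma \ref{lem3.1} or Lemma \ref{lem2.2} for the edge count), and derives a contradiction with extremality. Consequently the total number of vertices lying in components of order $\ne t$ is bounded by a constant depending only on $t$, which forces $|\mathbb{H}_t|\ge (n-O(1))/t$. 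So, contrary to your closing remark, the extremality of $G^\star$ (through Lemma \ref{lem2.4}) is essential here, and the lemma is not the pigeonhole triviality you describe.
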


\begin{proof}
By Lemma \ref{lem3.4}, $\mathbb{H}_{>t+3}=\varnothing.$
Hence, it suffices to show $|\mathbb{H}_i|<t$
for any $i\leq t+3$ and $i\neq t$.
Suppose to the contrary that $|\mathbb{H}_i|\geq t$ for some $i$
and let $\mathbb{F}$ be the disjoint union of any $t$ components in $\mathbb{H}_i$.
If $i<t$, then $e(\mathbb{F})\leq e(tK_i)<e(iK_t)$, which contradicts Lemma \ref{lem2.4}.

If $i=t+1$, then by Lemma \ref{lem3.1} and $\beta=\lfloor\frac{t+1}{s+1}\rfloor\leq\lfloor\frac{t+1}{3}\rfloor$,
we have
$$e(\mathbb{F})\leq t\left({t\choose 2}+\beta-1\right)<(t+1){t\choose 2}=e\left((t+1)K_t\right),$$
which contradicts Lemma \ref{lem2.4}.

If $i\in\{t+2,t+3\}$, then by Lemma \ref{lem2.2},
$$e(\mathbb{F})\leq t\left({t\choose 2}+i-t\right)<i{t\choose 2}=e(iK_t),$$
also a contradiction.
\end{proof}

Now we are ready to characterize a $(t+2)$-vertex component of $G^\star-K$, which
is also a key subgraph of the extremal graphs.

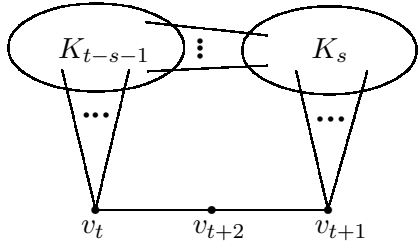
\begin{figure}[htp]
 \setlength{\unitlength}{0.75pt}
\begin{center}
 \begin{picture}(203.7,140.7)
 \qbezier(87.7,118.5)(87.7,109.4)(74.9,102.9)\qbezier(74.9,102.9)(62.0,96.4)(43.9,96.4)
 \qbezier(43.9,96.4)(25.7,96.4)(12.8,102.9)\qbezier(12.8,102.9)(0.0,109.4)(0.0,118.5)
 \qbezier(0.0,118.5)(-0.0,127.7)(12.8,134.2)\qbezier(12.8,134.2)(25.7,140.7)(43.9,140.7)
 \qbezier(43.9,140.7)(62.0,140.7)(74.9,134.2)\qbezier(74.9,134.2)(87.7,127.7)(87.7,118.5)
 \qbezier(203.7,117.1)(203.7,107.9)(191.0,101.5)\qbezier(191.0,101.5)(178.2,95.0)(160.2,95.0)
 \qbezier(160.2,95.0)(142.2,95.0)(129.5,101.5)\qbezier(129.5,101.5)(116.7,107.9)(116.7,117.1)
 \qbezier(116.7,117.1)(116.7,126.2)(129.5,132.7)\qbezier(129.5,132.7)(142.2,139.2)(160.2,139.2)
 \qbezier(160.2,139.2)(178.2,139.2)(191.0,132.7)\qbezier(191.0,132.7)(203.7,126.2)(203.7,117.1)
 \put(43.5,37.0){\circle*{4}}
 \put(159.5,37.0){\circle*{4}}
 \qbezier(43.5,37.0)(101.5,37.0)(159.5,37.0)
 \put(101.5,37.0){\circle*{4}}
 \put(39.0,84.8){\circle*{2}}
 \put(48.8,84.8){\circle*{2}}
 \put(43.5,84.8){\circle*{2}}
 \put(155.7,82.7){\circle*{2}}
 \put(166.2,82.7){\circle*{2}}
 \put(161.0,82.7){\circle*{2}}
 \put(25,123.3){\makebox(0,0)[tl]{$K_{t-s-1}$}}
 \put(151.4,123.3){\makebox(0,0)[tl]{$K_s$}}
 \qbezier(68.9,131.2)(99.0,128.0)(129.1,124.7)
 \qbezier(69.6,106.6)(99.3,108.0)(129.1,109.5)
 \qbezier(26.8,107.3)(35.2,72.1)(43.5,37.0)
 \qbezier(60.2,107.3)(51.8,72.1)(43.5,37.0)
 \qbezier(179.1,106.6)(169.3,71.8)(159.5,37.0)
 \qbezier(143.6,106.6)(151.5,71.8)(159.5,37.0)
 \put(95.7,122.0){\circle*{2}}
 \put(95.7,114.4){\circle*{2}}
 \put(95.7,118.2){\circle*{2}}
 \put(36.3,31.5){\makebox(0,0)[tl]{$v_t$}}
 \put(92.1,31.5){\makebox(0,0)[tl]{$v_{t+2}$}}
 \put(153.0,31.5){\makebox(0,0)[tl]{$v_{t+1}$}}
 \end{picture}
 \caption{The graph $S^1\left(\overline{H_{s,t}}\right)$ for
 $\beta=\lfloor\frac{t+1}{s+1}\rfloor=2$.}\label{fig3.4}
\end{center}
\end{figure}

\begin{thm}\label{thm3.2}
Let $H$ be a component of $G^\star-K$. If $|H|=t+2$, then $\beta\leq2$. Moreover,
$H\cong S^1\left(\overline{H_{s,t}}\right)$ for $\beta=2$ (see Fig. \ref{fig3.4}),
and $H\cong \overline{H^\star}$ for $\beta=1$.
\end{thm}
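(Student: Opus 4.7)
The plan is to attack this in two phases: first I would rule out $\beta\geq 3$, then pin down $H$ up to isomorphism in the remaining cases $\beta\in\{1,2\}$. In both phases, the main engine is to exhibit a competitor graph $H'$ on the vertex set $V(H)$ that has $(s,t)$-property, and to derive a contradiction from Lemma~\ref{lem2.4}, Lemma~\ref{lem2.8}, or (when those are silent) a direct eigenvector comparison.

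\textbf{Phase 1 ($\beta\leq 2$).} I would take the competitor $H^{\dagger}=S^{1}(\overline{H_{s,t}})$ obtained by subdividing once a minimum-degree-sum edge of $\overline{H_{s,t}}$. By Theorem~\ref{thm3.1}, $\overline{H_{s,t}}$ has $t+1$ vertices and $\binom{t}{2}+\beta-1$ edges, so $H^{\dagger}$ has $t+2$ vertices and $\binom{t}{2}+\beta$ edges. Moreover $H^{\dagger}$ inherits $(s,t)$-property from $\overline{H_{s,t}}$, because contracting the subdivision vertex back into either neighbor recovers $\overline{H_{s,t}}$, so any minor of $H^{\dagger}$ is also a minor of $\overline{H_{s,t}}$. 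Lemma~\ref{lem2.2} gives $e(H)\leq\binom{t}{2}+2$, so $\beta\geq 3$ yields $e(H^{\dagger})>e(H)$, contradicting Lemma~\ref{lem2.4}. Hence $\beta\leq 2$.

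\textbf{Phase 2, $\beta=2$.} Now $H^{\dagger}$ has exactly $\binom{t}{2}+2$ edges, so Lemma~\ref{lem2.4} combined with Lemma~\ref{lem2.2} forces $e(H)=\binom{t}{2}+2$. Applying Lemma~\ref{lem3.3}, $\overline{H}\cong H_{a,b,c}$ for some $a+b+c=t-1$ or $\overline{H}\cong H^{\star}$. I would first rule out the Petersen complement: that forces $t=8$ and $s\in\{2,3\}$, and comparing degree sequences via Lemma~\ref{lem2.8} (note $\overline{H^{\star}}$ is $6$-regular while $S^{1}(\overline{H_{s,t}})$ has most degrees equal to $t-1=7$) rejects it. For the surviving case $\overline{H}\cong H_{a,b,c}$, a direct neighborhood check identifies the complement of $S^{1}(\overline{H_{s,t}})$ as $H_{s,t-s-1,0}$; using that in $\overline{H_{a,b,c}}$ the three independent-set blocks carry degree $t-1$ while $w,u_1,u_2$ have degrees $c+2,b+1,a+1$, the edge-swap argument of Lemma~\ref{lem3.2} together with Lemma~\ref{lem2.8} pins down $(a,b,c)=(s,t-s-1,0)$ and hence $H\cong S^{1}(\overline{H_{s,t}})$.

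\textbf{Phase 2, $\beta=1$.} Now $\overline{H_{s,t}}=K_{t}\cup K_{1}$ and $H^{\dagger}$ has only $\binom{t}{2}+1$ edges, so I would switch competitor to $H'=K_{t}\cup K_{2}$ (each component is too small for a forbidden bipartite minor, so $(s,t)$-property holds, and $e(H')=\binom{t}{2}+1$). Lemma~\ref{lem2.4} gives $e(H)\geq\binom{t}{2}+1$, leaving two subcases. If $e(H)=\binom{t}{2}+2$, Lemma~\ref{lem3.3} yields $\overline{H}\cong H^{\star}$ (hence $t=8$ and $H\cong\overline{H^{\star}}$) or $\overline{H}\cong H_{a,b,c}$; I would exclude the latter by constructing an explicit $K_{\gamma,t+1-\gamma}$-minor in $\overline{H_{a,b,c}}$ with $\gamma=\lfloor(t+1)/2\rfloor$, obtained by bipartitioning the size-$(t-1)$ clique $\overline{K}_a\cup\overline{K}_b\cup\overline{K}_c$ and absorbing the three auxiliary vertices $w,u_1,u_2$ into the two halves to supply the missing bipartite edges, which contradicts $(s,t)$-property because $\beta=1$ forces $s\geq\gamma$. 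If $e(H)=\binom{t}{2}+1$, the majorization argument via Lemma~\ref{lem2.8} against $\pi(K_{t}\cup K_{2})=(t-1,\ldots,t-1,1,1)$ forces equality of degree sequences, and a brief structural deduction identifies $H$ uniquely as $K_{t}$ minus an edge $v_1v_2$ with pendant leaves $\ell_i$ attached to $v_i$. Since $H$ matches $K_{t}\cup K_{2}$ in both edge count and degree sequence, Lemmas~\ref{lem2.4}--\ref{lem2.8} are inconclusive; instead I would compare $G^{\star}$ with $G'=G^{\star}-E(H)+E(K_{t}\cup K_{2})$ (taking $K_{2}$ to be the edge $\ell_1\ell_2$) directly at the Perron level, where the change $X^{T}(A(G')-A(G^{\star}))X$ simplifies to $2(x_{v_2}-x_{\ell_1})(x_{v_1}-x_{\ell_2})>0$ because core-vertex Perron entries dominate those of pendant leaves (a standard eigenvector-gap estimate analogous to Lemma~\ref{lem2.5}). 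This contradicts the extremality of $G^{\star}$, ruling out the subcase and leaving only $t=8$, $H\cong\overline{H^{\star}}$.

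\textbf{Main obstacles.} The technical heart of the argument is the explicit $K_{\gamma,t+1-\gamma}$-minor construction inside $\overline{H_{a,b,c}}$ in the $\beta=1$ case, where one must route the branch sets through the auxiliary vertices $w,u_1,u_2$ so that every cross-pair receives an edge. A secondary difficulty is the degree-sequence bookkeeping that isolates $(a,b,c)=(s,t-s-1,0)$ in the $\beta=2$ case via Lemma~\ref{lem2.8}, plus the fact that the $e(H)=\binom{t}{2}+1$ subcase for $\beta=1$ lies just outside the reach of the edge- and degree-based lemmas and forces a direct spectral comparison using the Perron vector of $G^{\star}$.
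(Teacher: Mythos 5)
Your overall plan tracks the paper's: eliminate $\beta\geq 3$ first, then split on $\beta\in\{1,2\}$ and pin down $H$ against competitors via Lemmas~\ref{lem2.4} and~\ref{lem2.8}, with a direct Perron comparison to dispose of the $e(H)=\binom{t}{2}+1$ subcase for $\beta=1$. That final eigenvector argument is exactly the paper's Claim~3.9.

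There is, however, a genuine gap at the first step. You justify that $H^{\dagger}=S^{1}(\overline{H_{s,t}})$ has the $(s,t)$-property by saying that contracting the subdivision vertex recovers $\overline{H_{s,t}}$, ``so any minor of $H^{\dagger}$ is also a minor of $\overline{H_{s,t}}$.'' That implication runs backwards. Recovering $\overline{H_{s,t}}$ by contraction shows that $\overline{H_{s,t}}$ is a minor of $H^{\dagger}$, hence that every minor of $\overline{H_{s,t}}$ is a minor of $H^{\dagger}$---not the converse. Subdivision can create new minors: trivially $H^{\dagger}$ has $t+2$ vertices and is not a minor of the $(t+1)$-vertex graph $\overline{H_{s,t}}$, and less trivially $K_{2,3}$ is a minor of $S^{1}(K_{4})$ (take the bipartition $\{c,d\}$ versus $\{a,b,w\}$ after subdividing $cd$ into $cwd$) but not of $K_{4}$. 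So you have not established the $(s,t)$-property of $S^1(\overline{H_{s,t}})$, and all of Phase~1 and part of Phase~2 with $\beta=2$ rest on it via Lemma~\ref{lem2.4}. The paper sidesteps this in Phase~1 by pairing $H$ with a $K_{t}$-component and comparing against $\overline{H_{s,t}}\cup\overline{H_{s,t}}$, whose $(s,t)$-property already comes from Theorem~\ref{thm3.1}; for $\beta=2$, where $S^{1}(\overline{H_{s,t}})$ is unavoidable, it verifies the property by a case analysis on which edge of $S^1(\overline{H_{s,t}})$ is contracted (an edge inside the $(t-1)$-clique leaves the complement connected, so Lemma~\ref{lem3.0} applies; the edge at the subdivision vertex returns $\overline{H_{s,t}}$), together with the observation that deleting a vertex is dominated by contracting an incident edge. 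You need an argument of that kind, not the false minor-inheritance claim.

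A second, softer issue is the $K_{\gamma,\,t+1-\gamma}$-minor construction in the $\beta=1$ case. Bipartitioning the $(t-1)$-clique arbitrarily and ``absorbing $w,u_1,u_2$ into the two halves'' does not in general supply all cross edges, because each of $w,u_1,u_2$ sees only one of the three clique blocks. What actually works (and is in fact what the paper does for $(\ref{a3.7})$) is to contract the middle edge $u_2w$: the merged vertex together with $K_b$ on one side and $K_a\cup K_c\cup\{u_1\}$ on the other gives a $K_{b+1,\,t-b}$ subgraph, and for $\beta=1$ one always has $\min\{b+1,\,t-b\}\leq\gamma=\lfloor(t+1)/2\rfloor$; no careful bipartition is needed. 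Your description is close in spirit but would need this precision to see that every admissible $(a,b,c)$ is ruled out. (Also, Lemma~\ref{lem3.2} is used in Theorem~\ref{thm3.1}'s proof, not in the $\beta=2$ degree-sequence comparison; what the paper uses there is $(\ref{a3.7})$ together with Lemma~\ref{lem2.8}.)
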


\begin{proof}
The proof is divided into several claims.

\begin{claim}\label{cl3.8}
$\beta\leq2$.
\end{claim}

\begin{proof} If $\beta\geq3$, then by Theorem \ref{thm3.1},
$e\left(\overline{H_{s,t}}\right)={t\choose 2}+\beta-1\geq {t\choose 2}+2.$
By Lemma \ref{lem3.5}, $G^\star-K$ contains a component $H_0$ isomorphic to $K_t$.
Let $H'\cong\overline{H_{s,t}}\cup \overline{H_{s,t}}$ and $V(H')=V(H\cup H_0)$.
By Theorem \ref{thm3.1}, $H'$ has $(s,t)$-property. However,
$e(H\cup H_0)=2{t\choose 2}+2<e(H'),$ contradicting Lemma \ref{lem2.4}.
So, $\beta\leq2$.
\end{proof}

\begin{claim}\label{cl3.9}
$e(H)={t\choose 2}+2$.
\end{claim}

\begin{proof}
By Lemma \ref{lem2.2}, we have $e(H)\leq {t\choose 2}+2$.
Since $K_t\cup K_2$ satisfies $(s,t)$-property,
by Lemma \ref{lem2.4}, we have $e(H)\geq e(K_t\cup K_2)={t\choose 2}+1$.
Now suppose that $e(H)=e(K_t\cup K_2)$.
Since $\Delta(H)\leq t-1$ and $\delta(H)\geq1$,
we can see that $\pi(H)\prec \pi(K_t\cup K_2)$.
By Lemma \ref{lem2.8}, we have $$\pi(H)=\pi(K_t\cup K_2)=(t-1,t-1,\ldots,t-1,1,1),$$
which implies that $H$ is obtained from $K_t$ by deleting an edge $u_1u_2$ and adding
two pendant edges $u_1v_1$ and $u_2v_2$.
Since $t\geq4$, we have $N_H(u_1)\setminus\{v_1\}\neq\varnothing$ and thus
$$\rho (x_{u_1}-x_{v_1})=(x_{v_1}-x_{u_1})+\sum_{v\in N_H(u_1)\setminus\{v_1\}}x_v>x_{v_1}-x_{u_1},$$
that is, $x_{u_1}>x_{v_1}$. By symmetry, we have $x_{u_1}=x_{u_2}$ and $x_{v_1}=x_{v_2}$.
Now, let $H'=H-\{u_1v_1, u_2v_2\}+\{u_1u_2, v_1v_2\}$ and $G'=G^\star-E(H)+E(H')$.
Then $H'\cong K_t\cup K_2$ and thus $H'$ has $(s,t)$-property.
\begin{eqnarray*}
\rho(G')-\rho(G^\star)\geq X^T(A(G')-A(G^\star))X
=2(x_{u_1}-x_{v_2})(x_{u_2}-x_{v_1})>0,
\end{eqnarray*}
a contradiction. Hence, $e(H)={t\choose 2}+2$.
\end{proof}

Now, combining Claims \ref{cl3.8} and \ref{cl3.9} with Lemma \ref{lem3.3},
$H$ is isomorphic to either some $\overline{H_{a,b,c}}$ (see Fig. \ref{fig3.1})
or the complement of the Peterson graph $H^\star$.

\begin{claim}\label{cl3.10}
If $\beta=2$, then $H\cong S^1\left(\overline{H_{s,t}}\right)$ (see Fig. \ref{fig3.4}).
\end{claim}

\begin{proof}
Since $\beta=\lfloor\frac{t+1}{s+1}\rfloor$, we have
\begin{align}\label{a3.5}
2s+1\leq t\leq 3s+1~~~\hbox{and}~~~\gamma=\min\{s,\lfloor\frac{t+1}2\rfloor\}=s.
\end{align}
If $H\cong \overline{H^\star}$, then $|H|=10$, $t=8$ and $H$ is 6-regular.
It follows that $e(H)=30.$
On the other hand, since $S^1\left(\overline{H_{s,8}}\right)$ is a subdivision of $\overline{H_{s,8}}$,
by Theorem \ref{thm3.1} we have
$e\left(S^1\left(\overline{H_{s,8}}\right)\right)=e\left(\overline{H_{s,8}}\right)+1={t\choose 2}+\beta=30$.
By Fig. \ref{fig3.4} we can see that
\begin{align}\label{a3.6}
\pi\left(S^1\left(\overline{H_{s,t}}\right)\right)=(t-1,\ldots,t-1,t-s,s+1,2),
\end{align}
where $s+1\leq t-s\leq6$. Since $H$ is 6-regular, we have
$\pi(H)\prec\pi\left(S^1\left(\overline{H_{s,8}}\right)\right),$
contradicting Lemma \ref{lem2.8}.
Thus, $H$ is isomorphic to some $\overline{H_{a,b,c}}$ with $a+b+c=t-1$.

Next we show that
\begin{align}\label{a3.7}
\min\{b,c\}\geq \gamma.
\end{align}
If $b\leq \gamma-1$, we contract the edge $u_2w$ in $\overline{H_{a,b,c}}$ and call the new vertex $u$ in
the resulting graph, then we get a complete bipartite subgraph with bipartite partition
$\langle V(K_b)\cup \{u\}$, $V(K_a)\cup V(K_c)\cup \{u_1\}\rangle$.
This implies that $H$ contains a $K_{b+1,a+c+1}$-minor, a contradiction with $(s,t)$-property.
So, $b\geq \gamma$. And by symmetry, $c\geq \gamma.$

Now by Fig. \ref{fig3.1} we can see that
\begin{align}\label{a3.8}
\pi\left(\overline{H_{a,b,c}}\right)=(t-1,\ldots,t-1,a_1,a_2,a_3),
\end{align}
where $a_1,a_2,a_3\in\{a+2,b+1,c+1\}.$
By (\ref{a3.5}) and (\ref{a3.7}), $\min\{b,c\}\geq \gamma=s.$
It follows that $a_3\geq2$ and $a_2\geq s+1$.
Comparing (\ref{a3.6}) with (\ref{a3.8}), we have
$\pi\left(\overline{H_{a,b,c}}\right)\prec\pi\left(S^1\left(\overline{H_{s,t}}\right)\right)$.
By Lemma \ref{lem2.8}, $\pi\left(\overline{H_{a,b,c}}\right)=\pi\left(S^1\left(\overline{H_{s,t}}\right)\right)$.
So, $a_3=2$ and $a_2=s+1.$ Note that $\min\{b,c\}\geq s\geq2$.
We conclude that $a=0$ and $\min\{b,c\}=s$, that is, $\overline{H_{a,b,c}}\cong S^1\left(\overline{H_{s,t}}\right).$

Finally, we shall prove that $S^1\left(\overline{H_{s,t}}\right)$ has $(s,t)$-property.
It suffices to show that contracting any edge, the resulting graph always
has $(s,t)$-property.
Indeed, if we contract an edge within the $(t-1)$-clique, then the complement of the resulting graph
is clearly connected. By Lemma \ref{lem3.0}, it has $(s,t)$-property.
If we contract an edge out of the $(t-1)$-clique, then the resulting graph
is isomorphic to $\overline{H_{s,t}}$.
By Theorem \ref{thm3.1}, it also has $(s,t)$-property.
\end{proof}

\begin{claim}\label{cl3.11}
If $\beta=1$, then $H\cong \overline{H^\star}$ (see Fig. \ref{fig3.3}).
\end{claim}

\begin{proof}
Since $\beta=\lfloor\frac{t+1}{s+1}\rfloor=1$, we have $t\leq 2s$
and thus $\gamma=\min\{s,\lfloor\frac{t+1}{2}\rfloor\}=\lfloor\frac{t+1}{2}\rfloor.$
If $H$ is isomorphic to some $\overline{H_{a,b,c}}$ with $a+b+c=t-1$,
then by (\ref{a3.7}), we have $t-1\geq b+c\geq 2\gamma=2\lfloor\frac{t+1}{2}\rfloor,$
a contradiction.
By Lemma \ref{lem3.3}, $H$ is only possibly isomorphic to $\overline{H^\star}.$

It remains to show that $\overline{H^\star}$ has $(s,t)$-property.
We know that the Peterson graph $H^\star$ is 3-connected
and any two non-adjacent vertices of $H^\star$ have exactly one common neighbor, which implies that
contracting any edge of $\overline{H^\star}$,
the complement of the resulting graph is connected.
By Lemma \ref{lem3.0}, the resulting graph has $(s,t)$-property,
and thus $\overline{H^\star}$ has $(s,t)$-property.
\end{proof}

Combining with Claims \ref{cl3.8}, \ref{cl3.10} and \ref{cl3.11},
the proof of Theorem \ref{thm3.2} is completed.
\end{proof}

From Lemma \ref{lem3.4}, we know that $\mathbb{H}_{>t+3}=\varnothing$.
Now combining with Lemma \ref{lem3.3}, we can get a stronger result.

\begin{thm}\label{thm3.3}
$\mathbb{H}_{>t+2}=\varnothing$.
\end{thm}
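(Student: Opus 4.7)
The plan is to rule out $\mathbb{H}_{t+3}=\varnothing$, since Lemma \ref{lem3.4} already gives $\mathbb{H}_{>t+3}=\varnothing$ and the two together yield $\mathbb{H}_{>t+2}=\varnothing$. Suppose for contradiction that a component $H$ of $G^\star-K$ has $|H|=t+3$. Since $K_t\cup K_3$ is $K_{a,b}$-minor-free for every admissible pair $a+b=t+1,\,1\le a\le\gamma$ and therefore enjoys the $(s,t)$-property, placing it on $V(H)$ and combining Lemmas \ref{lem2.2} and \ref{lem2.4} pins down $e(H)=\binom{t}{2}+3$. When $\beta\ge 4$, the stronger competitor $\overline{H_{s,t}}\cup K_2$ (which has the $(s,t)$-property by Theorem \ref{thm3.1}) already carries $\binom{t}{2}+\beta>\binom{t}{2}+3$ edges, directly contradicting Lemma \ref{lem2.4}, so the problem reduces to $\beta\le 3$.

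Next I would pin down $\pi(H)$, provisionally assuming $\delta(H)\ge 2$ (the leaf case will be the main obstacle, discussed at the end). With $\Delta(H)\le t-1$ and the bottom two entries of $\pi(H)$ summing to at least $4$, a partial-sum comparison yields $\pi(H)\prec\pi(K_t\cup K_3)=((t-1)^t,2^3)$; Lemma \ref{lem2.8} then forces equality. Partitioning $V(H)=A\cup B$ with $|A|=t$ carrying degree $t-1$ and $|B|=3$ carrying degree $2$, and letting $e_A,e_B,e_{AB}$ denote the within- and between-part edge counts, the identities $2e_A+e_{AB}=t(t-1)$, $2e_B+e_{AB}=6$, $e_A+e_B+e_{AB}=\binom{t}{2}+3$ leave exactly four triples $(e_A,e_B,e_{AB})$; connectedness of $H$ discards the disconnected option $(\binom{t}{2},3,0)$, and the remaining three all satisfy $e_{AB}\ge 2$, with $|E(K_A\cup K_B)\setminus E(H)|=|E(H)\setminus E(K_A\cup K_B)|=e_{AB}$ evenly split between the $A$-side and the $B$-side.

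I would finish each surviving case by comparing $G^\star$ with $G'=G^\star-E(H)+E(K_A\cup K_B)$. Since $K_A\cup K_B\cong K_t\cup K_3$ preserves both the $(s,t)$-property and the edge count, $G'$ is $K_{s,t}$-minor-free, and
$$\rho(G')-\rho(G^\star)\ge 2\sum_{uv\in E(K_A\cup K_B)\setminus E(H)}x_ux_v-2\sum_{uv\in E(H)\setminus E(K_A\cup K_B)}x_ux_v.$$
Feeding (\ref{a3}), which gives $\rho x_v\in[X_0+d_H(v)x_2,X_0+d_H(v)x_1]$, together with Claim \ref{cl2.1} applied with $a=t-1,b=2$, one first concludes $x_v>x_w$ for every $v\in A$ and $w\in B$; the balanced split then drives the shift to be essentially $m(x_A-x_B)^2>0$ (where $m=e_{AB}/2$), contradicting the maximality of $\rho(G^\star)$.

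The main obstacle is the leaf case $\delta(H)=1$, where the majorization step breaks at the $(t+1)$-st partial sum. My plan for it is to strip a leaf $v$ with neighbour $u$ and invoke Lemma \ref{lem3.3} on the connected $(t+2)$-vertex $(s,t)$-property graph $H-v$ of edge count $\binom{t}{2}+2$, whose complement must then be some $H_{a,b,c}$ or the Petersen graph $H^\star$. I expect that for every such $H-v$ and every choice of $u$, the pendant $v$ together with a short contracted path through $u$ produces a $K_{2,t-1}$-minor of $H$, violating the $(s,t)$-property (recall $s\ge 2$ forces $\gamma\ge 2$), thereby ruling out $\delta(H)=1$ and completing the proof.
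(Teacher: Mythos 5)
Your high-level architecture matches the paper's: reduce to $\mathbb{H}_{t+3}=\varnothing$, pin $e(H)=\binom t2+3$ via Lemmas~\ref{lem2.2} and~\ref{lem2.4}, bound $\beta$, split on $\delta(H)$, use majorization and an edge-swap in the $\delta\geq 2$ case, and strip a leaf and invoke Lemma~\ref{lem3.3} otherwise. But there is a genuine logical gap at the $\beta$-reduction that propagates into the leaf case, and the two subsequent steps are substantially more delicate than your sketch suggests.

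\textbf{The $\beta$-bound.} Your competitor $\overline{H_{s,t}}\cup K_2$ has $\binom t2+\beta$ edges, which strictly beats $\binom t2+3$ only when $\beta\geq4$; so you have reduced to $\beta\leq 3$, not $\beta\leq 2$. This matters because your leaf argument explicitly invokes Lemma~\ref{lem3.3}, whose hypothesis is $\beta\leq 2$, so the case $\beta=3$ is left uncovered. The paper eliminates $\beta=3$ by a different exchange: borrowing two $K_t$-components (plentiful by Lemma~\ref{lem3.5}) and comparing $H\cup K_t\cup K_t$ against $3\,\overline{H_{s,t}}$, getting $3\binom t2+3<3\binom t2+3(\beta-1)$ for every $\beta\geq3$.

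\textbf{The $\delta(H)\geq2$ case.} The majorization step and the partition into $A$ (degree $t-1$) and $B$ (degree $2$) are fine, but the ``balanced split gives $m(x_A-x_B)^2$'' conclusion assumes the missing $A$-edges and missing $B$-edges can be paired cleanly with disjoint pairs of cross edges. That is not automatic: a vertex $u\in A$ could have two $B$-neighbours (equivalently, sit on two missing $A$-edges), in which case the exchange does not factor termwise. The paper rules this out structurally: Claim~\ref{cl3.13} shows $d_{S_1}(u)=1$ for every $u\in S_2$ by exhibiting a forbidden $K_{2,t-1}$- or $K_{1,t}$-minor when $d_{S_1}(u)\geq2$, and then shows $v_i\neq v_j$ and $v_iv_j\notin E(H)$ for each non-adjacent pair $(u_i,u_j)$. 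Only after that does the exchange decompose into disjoint $4$-cycles, giving $2(x_{u_i}-x_{v_j})(x_{u_j}-x_{v_i})>0$ termwise via Claim~\ref{cl3.14}. Without these claims, your inequality is heuristic (the first-order terms cancel exactly since $d_{H'}(v)=d_H(v)$ for all $v$, and the sign of the second-order term is not ``essentially $m(x_A-x_B)^2$'' unless the matching structure is in place).

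\textbf{The leaf case.} Your expectation of a uniform $K_{2,t-1}$-minor from the pendant is not what actually happens. Lemma~\ref{lem3.3} produces $H-v\cong\overline{H_{a,b,c}}$ or $H-v\cong\overline{H^\star}$. In the first case the paper observes $\overline{H_{a,b,c}}$ has a $K_t$-minor, so attaching the pendant yields a $K_{1,t}$-minor (not obviously a $K_{2,t-1}$-minor). In the Petersen case $\overline{H^\star}$ has no $K_t$-minor, and the paper needs a separate argument: contract the edge $uw$ of the path $vuw$ and use the strong regularity of $H^\star$ to show the contracted vertex has degree $t$. Your blanket claim does not account for this bifurcation, and the sketched $K_{2,t-1}$-minor construction is not spelled out and in general does not go through.
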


\begin{proof}
It suffices to show that $\mathbb{H}_{t+3}=\varnothing$.
Suppose to the contrary that there exists a component $H$ of $G^\star-K$ with $|H|=t+3.$
On one hand, $e(H)\leq{t\choose 2}+3$ by Lemma \ref{lem2.2}.
On the other hand, note that
$K_t\cup K_3$ also satisfies $(s,t)$-property.
Then by Lemma \ref{lem2.4}, $e(H)\geq e(K_t\cup K_3)={t\choose 2}+3.$
Therefore, $e(H)={t\choose 2}+3$.
Moreover, if $\beta\geq3$, then by Theorem \ref{thm3.1},
$|\overline{H_{s,t}}|=t+1$, $e\left(\overline{H_{s,t}}\right)={t\choose 2}+(\beta-1)$
and $\overline{H_{s,t}}$ has $(s,t)$-property.
Selecting two copies of $K_t$ in $G^\star-K$, we have
$$e(H\cup K_t\cup K_t)=3{t\choose 2}+3<3{t\choose 2}+3(\beta-1)
=e\left(\overline{H_{s,t}}\cup \overline{H_{s,t}}\cup \overline{H_{s,t}}\right),$$
a contradiction with Lemma \ref{lem2.4}. Therefore, $\beta\leq2.$

\begin{claim}\label{cl3.12}
$\delta(H)\geq2.$
\end{claim}

\begin{proof}
If $\delta(H)=1$, say $d_H(v)=1$,
then $H-\{v\}$ is connected and $e(H-\{v\})={t\choose 2}+2$.
By Lemma \ref{lem3.3}, $H-\{v\}$ is isomorphic to either $\overline{H^\star}$ or some $\overline{H_{a,b,c}}$ with $a+b+c=t-1$.
Observe that $\overline{H_{a,b,c}}$ contains a $K_t$-minor (by contracting edges $u_1u_2$ and $u_2w$, see Fig. \ref{fig3.1}).
Hence, $H-\{v\}\cong \overline{H^\star}$ (Otherwise, $H$ contains a $K_{1,t}$-minor).

Now let $vuw$ be a path of length 2 in $H$.
We know that any two non-adjacent vertices in the Petersen graph $H^\star$ have exactly one common neighbor.
So, contracting $uw$ in $H-\{v\}$, the new vertex is of degree $t-1$ in
the resulting graph. Correspondingly,
contracting $uw$ in $H$, the new vertex is of degree $t$,
which contradicts that $H$ is $K_{1,t}$-minor free.
\end{proof}

Now we have $\pi(H)\prec\pi(K_t\cup K_3)=(t-1,\ldots,t-1,2,2,2)$,
since $e(H)={t\choose 2}+3$ and $\delta(H)\geq2$.
By Lemma \ref{lem2.8}, $\pi(H)=\pi(K_t\cup K_3).$
Let $S_1=\{v\in V(H)|d_H(v)=2\}$ and $S_2=\cup_{v\in S_1}N_H(v)\setminus S_1$.
Then $|S_1|=3$, and $1\leq d_{S_1}(u)\leq 3$ for any $u\in S_2$.

\begin{claim}\label{cl3.13}
$d_{S_1}(u)=1$ for any $u\in S_2$.
\end{claim}

\begin{proof}
Let $R$ be the set of non-adjacent vertex-pairs in $S_2$.
Since $|V(H)\setminus S_1|=t$ and $d_H(u)=t-1$ for any $u\in V(H)\setminus S_1$,
we have $|R|=\frac12e_H(S_1,S_2)\leq3$.
Suppose that $d_{S_1}(u_0)=c\in \{2,3\}$ for some $u_0\in S_2$.
Then, $u_0$ has exactly $c$ non-neighbors, say $\{u_1,\ldots,u_c\}$, in $S_2$.
If $c=3$, then $R=\{(u_0,u_i)|i=1,2,3\}$ and there are three paths $u_0v_iu_i$ in $H$,
where $v_i\in S_1$ and $i\in\{1,2,3\}$.
But now we find a $K_{2,t-1}$-minor in $H$ by contracting two of the three paths into edges.
Therefore, $c=2$.

Since $t\geq4$, we can find a vertex $u_3\in N_H(u_0)\setminus S_1$.
If both $u_1$ and $u_2$ are neighbors of $u_3$,
then $H$ contains a double star with a non-pendant edge $u_0u_3$ and $t$ leaves,
and thus a $K_{1,t}$-minor.
If both $u_1$ and $u_2$ are not neighbors of $u_3$, then $|R|\geq4$,
a contradiction.
Hence, we may assume that $u_1u_3\in E(H)$ and $u_2u_3\notin E(H)$.
Then, $u_1u_2\in E(H)$ (Otherwise, we get $|R|\geq4$ again).
It follows that $P=u_0u_3u_1u_2$ is an induced path in $H$
and $S_2=\{u_0,u_1,u_2,u_3\}$.
Furthermore, $d_{S_1}(u_0)=d_{S_1}(u_2)=2$ and $d_{S_1}(u_1)=d_{S_1}(u_3)=1$.
Now, we can always find a double star with
a non-pendant edge in $E(P)$ and $t$ leaves, a contradiction.
\end{proof}

\begin{claim}\label{cl3.14}
$x_u>x_v$ for any $u\in S_2$ and $v\in S_1$.
\end{claim}

\begin{proof}
By (\ref{a3}), $\rho x_{u}\geq X_0+d_H(u)x_2$ and $\rho x_{v}\leq X_0+d_H(v)x_1$.
Combining with Claim \ref{cl2.1}, we have
$\rho(x_{u}-x_{v})\geq d_H(u)x_2-d_H(v)x_1>0.$
\end{proof}

Now by Claim \ref{cl3.13}, each $u_i\in S_2$ has a unique neighbor $v_i$ in $S_1$,
and thus a unique non-neighbor $u_j$ in $S_2$.
If $v_i=v_j$ or $v_iv_j\in E(H)$ for some $(u_i,u_j)\in R$,
then $u_i$ and $u_j$ have $t-1$ common neighbors after
contracting the path $u_iv_iv_ju_j$ into
$u_iv_iu_j$ in $H$. This implies that $H$ contains a $K_{2,t-1}$-minor.
Thus, $v_i\neq v_j$ and $v_iv_j\notin E(H)$ for any $(u_i,u_j)\in R$.
Let $$H'=H-\{u_iv_i,u_jv_j|(u_i,u_j)\in R\}+\{u_iu_j,v_iv_j|(u_i,u_j)\in R\}$$ and
$G'=G^\star-E(H)+E(H')$. Note that $H'\cong K_t\cup K_3$. Thus, $H'$ has $(s,t)$-property.
Moreover, by Claim \ref{cl3.14}, we have
$$\rho(G')-\rho(G^\star)\geq 2\sum_{(u_i,u_j)\in R}(x_{u_i}x_{u_j}+x_{v_i}x_{v_j}-x_{u_i}x_{v_i}-x_{u_j}x_{v_j})=2(x_{u_i}-x_{v_j})(x_{u_j}-x_{v_i})>0,$$
a contradiction.
So, $\mathbb{H}_{t+3}=\varnothing$. This completes the proof.
\end{proof}

\section{Proof of Theorem \ref{thm3}}

~~~~Tait's conjecture has been confirmed for $s+t\leq6$.
This implies that Tait's conjecture holds for $t\leq3$.
In this section, we only need prove Theorem \ref{thm3} under that $t\geq4$.

\begin{lem}\label{lem4.1}
$|\mathbb{H}_{<t}\cup \mathbb{H}_{t+2}|\leq 1.$
\end{lem}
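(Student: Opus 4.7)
The plan is to suppose for contradiction that two components $H_1,H_2$ of $G^\star-K$ both belong to $\mathbb{H}_{<t}\cup\mathbb{H}_{t+2}$, and in each possible configuration to exhibit a graph $H'$ on $V(H_1)\cup V(H_2)$ that has the $(s,t)$-property and strictly more edges than $H_1\cup H_2$. Since $|H_1\cup H_2|\leq 2(t+2)$ is a constant, this contradicts the edge-maximality provided by Lemma~\ref{lem2.4}; notice that the degree-sequence machinery of Lemmas~\ref{lem2.5} and~\ref{lem2.8} will not be needed.

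Before the case analysis I would pin down $e(H_1\cup H_2)$. First, any component $H$ of $G^\star-K$ with $|H|\leq t-1$ must in fact be a clique: $K_{|H|}$ has the $(s,t)$-property (with at most $t$ vertices it cannot contain any $K_{a,b}$-minor with $a+b=t+1$), so Lemma~\ref{lem2.4} forces $e(H)\geq\binom{|H|}{2}$ and hence $H\cong K_{|H|}$. Second, by Theorem~\ref{thm3.2} every component of order $t+2$ has exactly $\binom{t}{2}+2$ edges. Every candidate $H'$ produced below will be a disjoint union of cliques of sizes at most $t$, and therefore will automatically have the $(s,t)$-property.

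The three cases are handled as follows. (A) If $H_1=K_a$, $H_2=K_b$ with $1\leq a\leq b\leq t-1$, I take $H'=K_{a+b}$ when $a+b\leq t$ (edge gain $ab\geq 1$), and $H'=K_t\cup K_{a+b-t}$ when $a+b>t$; the required $\binom{t}{2}+\binom{a+b-t}{2}>\binom{a}{2}+\binom{b}{2}$ follows from the convexity and the symmetry about $(a+b)/2$ of the map $x\mapsto\binom{x}{2}+\binom{a+b-x}{2}$, together with $a+b-t<a\leq b<t$, which forces $a+b-t$ to lie strictly farther from $(a+b)/2$ than $a$ on the decreasing side. (B) If $H_1=K_a$ with $1\leq a\leq t-1$ and $|H_2|=t+2$, I take $H'=K_t\cup K_{a+2}$ for $a\leq t-2$, with edge gain $\binom{a+2}{2}-\binom{a}{2}-2=2a-1\geq 1$; for the boundary $a=t-1$ the clique $K_{a+2}=K_{t+1}$ would contain a $K_{1,t}$ and lose the $(s,t)$-property, so I instead take $H'=2K_t\cup K_1$, with edge gain $\binom{t}{2}-\binom{t-1}{2}-2=t-3\geq 1$ since $t\geq 4$. (C) If $|H_1|=|H_2|=t+2$, I take $H'=2K_t\cup K_4$, which is valid because $t\geq 4$ and gives edge gain $\binom{4}{2}-4=2$.

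In every case $e(H')>e(H_1\cup H_2)$ while $V(H')=V(H_1\cup H_2)$ and $H'$ has the $(s,t)$-property, so Lemma~\ref{lem2.4} is violated. I expect the one piece genuinely requiring care to be the boundary subcase $a=t-1$ of~(B), where the natural single-clique completion $K_{a+2}$ exceeds size $t$ and must be split into $K_t\cup K_1$; the remaining edge-count inequalities reduce either to the identity $\binom{a+b}{2}=\binom{a}{2}+\binom{b}{2}+ab$, a short binomial expansion, or the elementary convexity remark used in~(A).
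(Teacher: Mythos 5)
Your proof is correct and takes essentially the same route as the paper: in every one of your cases the comparison graph $H'$ is exactly the $pK_t\cup K_q$ with $|H_1|+|H_2|=pt+q$ that the paper invokes, and the contradiction comes from the same local edge-maximality Lemma~\ref{lem2.4}. The only difference is that you write out the case analysis (and the convexity argument in case~(A)) explicitly, whereas the paper compresses this into ``one can check.''
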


\begin{proof}
By the way of contradiction assume that $H_1,H_2\in \mathbb{H}_{<t}\cup \mathbb{H}_{t+2}$, and let $|H_1|+|H_2|=pt+q$,
where $0\leq p\leq2$ and $1\leq q\leq t$. For each $H_i$, if $|H_i|\leq t-1$, then $e(H_i)\leq {|H_i|\choose 2}$;
and if $|H_i|=t+2$, then $e(H_i)\leq{t\choose 2}+2$ by Lemma \ref{lem3.1}.
In any case, one can check that
$e(H_1\cup H_2)<p{t\choose 2}+{q\choose 2}=e(pK_t\cup K_q)$, a contradiction with Lemma \ref{lem2.4}.
\end{proof}

\begin{thm}\label{thm4.1}
Let $G^\star,s,t,n,p,q,\beta$ be defined as in Theorem \ref{thm3}.
If $\beta=1$, then
$$ G^\star\cong \left\{
\begin{aligned}
   &K_{s-1}\nabla \left((p-1)K_t \cup \overline{H^\star}\right) &&\hbox{for $q=2$ and $t=8$}; \\
   &K_{s-1}\nabla \left(pK_t \cup K_q\right) &&\hbox{otherwise}.
\end{aligned}
\right.
$$
\end{thm}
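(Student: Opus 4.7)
The plan is to narrow down the possible structures of $G^\star$ using the structural machinery already established, and then settle the single ambiguous case by a direct eigenvalue comparison.

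\textbf{Structural classification.} Since $\beta=1$, Theorem~\ref{thm3.1} rules out components of $G^\star-K$ of order exactly $t+1$, and Theorem~\ref{thm3.3} rules out order greater than $t+2$. Thus every component of $G^\star-K$ has order at most $t$, or is a $(t+2)$-vertex component. A component $H$ of order $m\le t$ has fewer than $t+1$ vertices so $K_m$ vacuously has the $(s,t)$-property; the local edge-maximality Lemma~\ref{lem2.4} then forces $e(H)=\binom{m}{2}$, i.e.\ $H\cong K_m$. Any component of order $t+2$ is, by Theorem~\ref{thm3.2} under $\beta=1$, isomorphic to $\overline{H^\star}$, which requires $t=8$. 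By Lemma~\ref{lem4.1} at most one component has order different from $t$.

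\textbf{Vertex counting.} Let $k$ be the number of $K_t$-components and $H_0$ (if any) the unique exceptional component, of order $m$. From $kt+m=n-s+1=pt+q$ with $1\le q\le t$, exactly one alternative holds: (i) there is no $H_0$, so $q=t$, $k=p+1$, whence $G^\star\cong K_{s-1}\nabla(pK_t\cup K_q)$ since $K_q=K_t$; (ii) $m\le t-1$, so $k=p$, $m=q$, $H_0\cong K_q$, and $G^\star\cong K_{s-1}\nabla(pK_t\cup K_q)$; or (iii) $m=t+2$, forcing $t=8$, $q=2$, $k=p-1$, and $G^\star\cong K_{s-1}\nabla((p-1)K_t\cup \overline{H^\star})$. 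Outside the exceptional case $(q,t)=(2,8)$ only (i) or (ii) is available, so the theorem follows there. It remains to show that when $(q,t)=(2,8)$ the option in (iii) strictly beats the option in (ii).

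\textbf{Spectral comparison when $(q,t)=(2,8)$.} Set $G_1=K_{s-1}\nabla(pK_t\cup K_2)$ and $G_2=K_{s-1}\nabla((p-1)K_t\cup \overline{H^\star})$; the goal is $\rho(G_2)>\rho(G_1)$. Let $X$ be the Perron vector of $G_1$. By symmetry $X$ is constant on $K$, on the $K_t$-vertices, and on the $K_2$-vertices, with values $x_K,x_t,x_2$; the Perron equations for $G_1$ give
\[
\frac{x_2}{x_t}=\frac{\rho(G_1)-7}{\rho(G_1)-1},
\]
which tends to $1$ because $\rho(G_1)\ge\sqrt{n-1}$ is unbounded as $n\to\infty$. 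Now identify one $K_t$-component of $G_1$ with an $8$-set $B$ and the $K_2$ with a $2$-set $A$, and embed $\overline{H^\star}$ on $A\cup B$ so that the two vertices of $A$ are adjacent in $\overline{H^\star}$ (possible since $\overline{H^\star}$ has edges and is vertex-transitive). Since $\overline{H^\star}$ is $6$-regular on $10$ vertices with $30$ edges, this embedding yields $a=1$ edges inside $A$, $c=2\cdot 6-2a=10$ edges across the cut, and $b=30-a-c=19$ edges inside $B$, replacing the $28$ edges of $K_8$ and the single edge of $K_2$ in $G_1$. All other edges of $G_2$ and $G_1$ coincide, so
\[
X^T\bigl(A(G_2)-A(G_1)\bigr)X=2\bigl((b-28)x_t^2+(a-1)x_2^2+c\,x_tx_2\bigr)=2x_t\bigl(10x_2-9x_t\bigr),
\]
which is strictly positive once $x_2/x_t>9/10$, equivalently $\rho(G_1)>61$, and this holds for $n$ sufficiently large. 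The Rayleigh quotient gives $\rho(G_2)\ge X^TA(G_2)X/X^TX>\rho(G_1)$, so maximality of $\rho(G^\star)$ forces $G^\star\cong G_2$.

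\textbf{Main obstacle.} The delicate point is precisely this last step. Although $\overline{H^\star}$ has one more edge than $K_8\cup K_2$, its degree sequence is flatter and is incomparable with that of $K_8\cup K_2$ in the majorization order, so Lemma~\ref{lem2.8} does not apply directly. The key idea that makes the comparison work is that, for large $n$, the Perron entries $x_t$ and $x_2$ on $G^\star-K$ are nearly equal, and by embedding $\overline{H^\star}$ so that the two low-weight vertices $A$ fall on an edge, the $10$ new edges across the $A$-$B$ cut more than compensate for the loss of $9$ edges inside $B$.
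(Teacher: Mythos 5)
Your proof is correct, but the final step takes a much longer road than necessary and reveals a misreading of how the paper's lemmas divide the work. You already observe that $\overline{H^\star}$ has $30$ edges while $K_8\cup K_2$ has only $29$; at that point Lemma~\ref{lem2.4} (local edge maximality) applies directly. That lemma says: if $H$ is a union of components of $G^\star-K$ and $H'$ is any graph on the same vertex set with the $(s,t)$-property, then $e(H')\le e(H)$. Taking $H=K_8\cup K_2$ and $H'=\overline{H^\star}$ (whose $(s,t)$-property is Claim~\ref{cl3.11}) gives an immediate contradiction, so $G^\star-K$ cannot simultaneously contain a $K_8$ and a $K_2$ component. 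That is precisely the one-line argument the paper uses. Your worry about degree-sequence majorization is misplaced: Lemma~\ref{lem2.8} is only the tie-breaker when $e(H')=e(H)$, and since the edge counts here differ, one never reaches it.

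Your explicit Rayleigh-quotient computation is a valid substitute --- the Perron-vector ratio $x_2/x_t=(\rho-7)/(\rho-1)$, the embedding accounting $(a,b,c)=(1,19,10)$, and the resulting threshold $\rho>61$ are all correct, and the argument does yield a quantitative bound. But it is strictly more work than the paper needs, and you should have also noted explicitly that $K_{s-1}\nabla\bigl((p-1)K_8\cup\overline{H^\star}\bigr)$ is $K_{s,8}$-minor free (this is the content of Claim~\ref{cl3.11}), which is needed to make $G_2$ a legitimate competitor. Aside from the $(q,t)=(2,8)$ case, your structural classification and vertex counting coincide with the paper's.
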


\begin{proof}
Since $\beta=1$, by Theorem \ref{thm3.1} we have $\mathbb{H}_{t+1}=\varnothing$,
and by Theorem \ref{thm3.3}, $\mathbb{H}_{>t+2}=\varnothing$.
Furthermore, by Lemma \ref{lem4.1}, all but at most one component $H$ of $G^\star-K$
are isomorphic to $K_t$.
Note that $|G^\star-K|=pt+q$, where $1\leq q\leq t$.
Then, either $|H|=q$ or $|H|=t+q=t+2$.

If $q\neq2$, then $H\cong K_q$ and $G^\star-K\cong pK_t \cup K_q,$ as desired.
Now assume that $q=2$. Then either $H\cong K_2$, or $H\cong \overline{H^\star}$
by Theorem \ref{thm3.2} (In this case, $t=8$).
So, if $t\neq8$, then $G^\star-K\cong pK_t\cup K_2.$
If $t=8$, then $H\cong \overline{H^\star}$
(Otherwise, $H\cong K_2$.
Then $e(K_8\cup K_2)=29<30=e\left(\overline{H^\star}\right)$,
a contradiction with Lemma \ref{lem2.4}).
\end{proof}

\begin{lem}\label{lem4.2}
$|\mathbb{H}_{t+1}|\leq 2\beta-2$, where $\beta=\lfloor\frac{t+1}{s+1}\rfloor$.
\end{lem}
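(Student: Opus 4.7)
The plan is to suppose for contradiction that $|\mathbb{H}_{t+1}|\geq 2\beta-1$ and derive a violation of Lemma~\ref{lem2.8}. By Theorem~\ref{thm3.1}, every component of $\mathbb{H}_{t+1}$ is isomorphic to $\overline{H_{s,t}}$ (and in particular $\beta\geq 2$), so I would select $2\beta-1$ such components and let $H$ be their disjoint union. Then $|H|=(2\beta-1)(t+1)$ and $e(H)=(2\beta-1)\bigl(\binom{t}{2}+\beta-1\bigr)$.

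As the competing graph on $V(H)$ I would take
\[
H'\;\cong\;(2\beta-1)\,K_t\,\cup\,K_{2\beta-1}.
\]
Its vertex count is $(2\beta-1)t+(2\beta-1)=(2\beta-1)(t+1)$. Since $\beta\leq(t+1)/(s+1)$ with $s\geq 2$ forces $2\beta-1<t$, every component of $H'$ has fewer than $t+1$ vertices and hence cannot contain $K_{a,b}$ with $a+b=t+1$ as a minor; so $H'$ has $(s,t)$-property. A direct count gives $e(H')=(2\beta-1)\binom{t}{2}+\binom{2\beta-1}{2}=e(H)$, putting us exactly in the setup of Lemma~\ref{lem2.5}.

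The main step is to establish strict majorization $\pi(H)\prec\pi(H')$ with $\pi(H)\neq\pi(H')$, which then contradicts Lemma~\ref{lem2.8}. Setting $\alpha=t-(\beta-1)(s+1)$, the structure $H_{s,t}=K_{1,\alpha}\cup(\beta-1)K_{1,s}$ shows each copy of $\overline{H_{s,t}}$ has degree sequence $\bigl((t-1)^{t-\beta+1},(t-s)^{\beta-1},(t-\alpha)^{1}\bigr)$, so $\pi(H)$ is this list repeated $2\beta-1$ times and sorted, while
\[
\pi(H')=\bigl((t-1)^{(2\beta-1)t},(2\beta-2)^{2\beta-1}\bigr).
\]
Prefix sums clearly agree through position $(2\beta-1)(t-\beta+1)$. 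On the next block of $(2\beta-1)(\beta-1)$ positions, $\pi(H')$ still carries $(t-1)$'s while $\pi(H)$ has $(t-s)$'s, accumulating a surplus of $(2\beta-1)(\beta-1)(s-1)$. Using the identity $(2\beta-2)-(t-\alpha)=-(\beta-1)(s-1)$ (which rests on $t-\alpha=(\beta-1)(s+1)$), one then checks that on the final $2\beta-1$ positions this surplus is spent at rate $(\beta-1)(s-1)$ per step, so at position $(2\beta-1)t+\ell$ it equals $(\beta-1)(s-1)\bigl[(2\beta-1)-\ell\bigr]\geq 0$, with equality only at $\ell=2\beta-1$. Hence $\pi(H)\prec\pi(H')$, and the two sequences differ since $\pi(H)$ contains $(2\beta-1)(\beta-1)>0$ entries equal to $t-s$ while $\pi(H')$ contains none. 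Lemma~\ref{lem2.8} then yields the contradiction.

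The hard part is precisely the prefix-sum bookkeeping above: the middle block gains surplus and the final block loses, and the gain must be just enough to cover the loss. The identity $t-\alpha=(\beta-1)(s+1)$ is what makes the two cancel exactly at the endpoint (so that the total sums match, as required by Lemma~\ref{lem2.5}) while leaving a strictly positive surplus at every intermediate prefix. Everything else is assembled from Theorem~\ref{thm3.1} and the local-edge/degree tools of Lemmas~\ref{lem2.4}--\ref{lem2.8}.
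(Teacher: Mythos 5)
Your proof is correct and follows essentially the same strategy as the paper: compare the selected $\overline{H_{s,t}}$-components against the same number of $K_t$'s plus a small clique, compute equal edge counts via the identity $t-\alpha=(\beta-1)(s+1)$, and contradict Lemma~\ref{lem2.8} by a strict majorization. The only difference is organizational: the paper first rules out $|\mathbb{H}_{t+1}|\geq 2\beta$ by a cruder edge-count comparison ($2\beta$ copies versus $2\beta K_t\cup K_{2\beta}$ gives a strict edge deficit) and then handles $|\mathbb{H}_{t+1}|=2\beta-1$ by majorization, whereas you observe that selecting exactly $2\beta-1$ components whenever $|\mathbb{H}_{t+1}|\geq 2\beta-1$ lets the majorization argument cover both cases at once, making the preliminary edge-count step unnecessary. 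Your explicit prefix-sum bookkeeping is more detailed than the paper's terse ``we can observe that $\pi(\mathbb{F})\prec\pi(\mathbb{F}')$,'' and it checks out: the middle block accumulates surplus $(2\beta-1)(\beta-1)(s-1)$ at rate $s-1$, the last block spends it at rate $(\beta-1)(s-1)$, and since $\beta\geq2$, $s\geq2$ the surplus is strictly positive at every intermediate prefix, giving both $\prec$ and $\neq$.
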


\begin{proof}
The case $\mathbb{H}_{t+1}=\varnothing$ is trivial.
Assume that $\mathbb{H}_{t+1}\neq\varnothing$.
Then by Theorem \ref{thm3.1},
$\beta\geq2$, $H\cong \overline{H_{s,t}}$ and $e(H)={t\choose 2}+\beta-1$ for any $H\in \mathbb{H}_{t+1}$.

If $|\mathbb{H}_{t+1}|\geq 2\beta$,
we select $2\beta$ copies of $\overline{H_{s,t}}$ and denote it by $\mathbb{F}$.
Then $e(\mathbb{F})=2\beta\left({t\choose 2}+\beta-1\right)$.
Now let $\mathbb{F}'=2\beta K_t\cup K_{2\beta}$.
Note that $2\beta<t.$ Then $\mathbb{F}'$ satisfies $(s,t)$-property.
However,
$$e(\mathbb{F}')=2\beta{t\choose 2}+{2\beta\choose 2}>2\beta\left({t\choose 2}+\beta-1\right)=e(\mathbb{F}),$$
a contradiction. So, $|\mathbb{H}_{t+1}|\leq 2\beta-1$.

Now assume that $|\mathbb{H}_{t+1}|=2\beta-1$.
Let $\mathbb{F}$ be the disjoint union of $2\beta-1$ copies of $\overline{H_{s,t}}$,
and $\mathbb{F}'=(2\beta-1) K_t\cup K_{2\beta-1}$.
Then $e(\mathbb{F}')=e(\mathbb{F}).$
Recall that $H_{s,t}\cong K_{1,\alpha}\cup (\beta-1)K_{1,s}$, where $\alpha=t-(s+1)(\beta-1)\geq s$.
So,
\begin{align}\label{a4.1}
\delta\left(\overline{H_{s,t}}\right)=(s+1)(\beta-1)>2(\beta-1)
\end{align}
and thus $\delta(\mathbb{F})>2\beta-2.$
Note that $\pi(\mathbb{F}')=(t-1,\ldots,t-1,2\beta-2,\ldots,2\beta-2).$
We can observe that $\pi(\mathbb{F})\prec\pi(\mathbb{F}'),$ a contradiction.
Therefore, $|\mathbb{H}_{t+1}|\leq 2\beta-2$.
\end{proof}

\begin{lem}\label{lem4.3}
If $\mathbb{H}_{t+1}\neq \varnothing$, then $\mathbb{H}_{t+2}\cup\mathbb{H}_{<t}=\varnothing$.
\end{lem}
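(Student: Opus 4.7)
The plan is to reach a contradiction by producing, for each scenario violating the conclusion, a disjoint union $H$ of components of $G^\star-K$ and a replacement $H'$ on $V(H)$ with the $(s,t)$-property that either has strictly more edges (ruled out by Lemma \ref{lem2.4}) or the same number of edges but a strictly dominant degree sequence in majorization order (ruled out by Lemma \ref{lem2.8}). Throughout I fix $H_1\in\mathbb{H}_{t+1}$, so that Theorem \ref{thm3.1} supplies $\beta\geq 2$ and $H_1\cong\overline{H_{s,t}}$.

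Suppose first that some $H_2\in\mathbb{H}_{t+2}$ exists. Theorem \ref{thm3.2} then forces $\beta\leq 2$, hence $\beta=2$ and $H_2\cong S^1(\overline{H_{s,t}})$. I would take $H=H_1\cup H_2$ and $H'=2K_t\cup K_3$, which live on the same $(2t+3)$-vertex set, both satisfy the $(s,t)$-property, and both carry $2\binom{t}{2}+3$ edges. Using the degree sequence of $\overline{H_{s,t}}$ (namely $(t-1)^{t-1},t-s,s+1$ for $\beta=2$) together with formula (\ref{a3.6}), I would verify that
$$\pi(H)=((t-1)^{2t-2},(t-s)^2,(s+1)^2,2)\quad\text{and}\quad \pi(H')=((t-1)^{2t},2^3),$$
and a short partial-sum comparison (splitting on $t=2s+1$ versus $t\geq 2s+2$) yields $\pi(H)\prec\pi(H')$ with $\pi(H)\neq\pi(H')$, contradicting Lemma \ref{lem2.8}.

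Suppose instead some $H_2\in\mathbb{H}_{<t}$ exists; write $q'=|H_2|$, $1\leq q'\leq t-1$. When $q'\geq\beta$, I would take $H=H_1\cup H_2$ and $H'=K_t\cup K_{q'+1}$ on the same $t+q'+1$ vertices; the estimate $e(H')-e(H)\geq(q'+1)-\beta\geq 1$ contradicts Lemma \ref{lem2.4}. The delicate regime is $1\leq q'\leq\beta-1$, where no swap supported on $V(H_1\cup H_2)$ alone produces more edges, because the surplus $\beta-1$ from Theorem \ref{thm3.1} exactly cancels the gain of passing from $K_{q'}$ to $K_{q'+1}$. The fix will be to borrow an extra $K_t$-component of $G^\star-K$, which is available because Lemma \ref{lem3.5} together with $n$ being large force $|\mathbb{H}_t|\geq 1$. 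With $H=H_1\cup H_2\cup K_t$ and $H'=2\overline{H_{s,t}}\cup K_{q'-1}$ on the common $(2t+q'+1)$-vertex set (both plainly having the $(s,t)$-property), a direct edge count gives
$$e(H')-e(H)\geq (\beta-1)+\binom{q'-1}{2}-\binom{q'}{2}=\beta-q'\geq 1,$$
so Lemma \ref{lem2.4} again closes the case.

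The step I expect to be the main obstacle is the subcase $q'\leq\beta-1$: any natural two-component exchange on $V(H_1\cup H_2)$ balances to zero edge gain precisely because $\overline{H_{s,t}}$ is already the maximum structure on $t+1$ vertices. The three-way trade $H_1\cup H_2\cup K_t \mapsto 2\overline{H_{s,t}}\cup K_{q'-1}$ is the crucial idea, as it spends one $K_t$ to create a second copy of $\overline{H_{s,t}}$ and thereby cashes in the surplus $\beta-1$ twice; once that move is recognized, the arithmetic is immediate.
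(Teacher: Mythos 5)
Your proposal is correct and follows essentially the same strategy as the paper: the $\mathbb{H}_{t+2}$ case is handled exactly as in the paper via the degree-sequence comparison $\pi(H_1\cup H_2)\prec\pi(2K_t\cup K_3)$ and Lemma \ref{lem2.8}, and the $\mathbb{H}_{<t}$ case is handled via edge-count exchanges using Lemma \ref{lem2.4} and borrowed $K_t$ components. The only (minor) difference is in the subcase $q'\leq\beta-1$: the paper derives two incompatible bounds $q'\leq\beta-1$ and $q'\geq 2\beta-1$ by two separate exchanges (the second using $q'$ copies of $K_t$), whereas you dispose of that subcase with the single three-component swap $H_1\cup H_2\cup K_t\mapsto 2\overline{H_{s,t}}\cup K_{q'-1}$ borrowing just one $K_t$, which is a slightly cleaner variant of the same idea.
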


\begin{proof}
Suppose to the contrary that $\mathbb{H}_{t+2}\cup\mathbb{H}_{<t}\neq \varnothing$.
By Lemma \ref{lem4.1}, $G^\star-K$ contains a unique component $H_1\in \mathbb{H}_{t+2}\cup\mathbb{H}_{<t}.$
Now let $H_2\in \mathbb{H}_{t+1}$. Then by Theorem \ref{thm3.1},
$\beta\geq2$ and $H_2\cong \overline{H_{s,t}}$.

If $|H_1|=t+2$, then $\beta=2$ and $H_1\cong S^1\left(\overline{H_{s,t}}\right)$ by Theorem \ref{thm3.2}.
Note that $H_1$ is a subdivision of $H_2$ and $e(H_2)={t\choose 2}+\beta-1$. We have
$$e(H_1\cup H_2)=2e(H_2)+1=2\left({t\choose 2}+\beta-1\right)+1=2{t\choose 2}+3.$$
Now let $H'=K_t\cup K_t\cup K_3$.
Then $|H'|=|H_1\cup H_2|$ and $e(H')=e(H_1\cup H_2).$
By (\ref{a4.1}), $\delta(H_2)>2.$
Since $H_1$ is a subdivision of $H_2$, $\delta(H_1)=2$ and its vertex of degree two is unique.
This implies that $\pi(H_1\cup H_2)\prec \pi(H')$ and $\pi(H_1\cup H_2)\neq\pi(H')$,
a contradiction.

If $|H_1|<t$, then $H_1\cong K_{|H_1|}$. Then
\begin{align}\label{a4.2}
|H_1|\leq\beta-1.
\end{align}
Otherwise, $|H_1|>\beta-1,$
then $$e(H_1\cup H_2)={|H_1|\choose 2}+{t\choose 2}+\beta-1<{t\choose 2}+{|H_1|\choose 2}+|H_1|=e(K_t\cup K_{|H_1|+1}),$$
a contradiction with Lemma \ref{lem2.4}. On the other hand, we have
\begin{align}\label{a4.3}
|H_1|(\beta-1)\leq{|H_1|\choose 2}.
\end{align}
Indeed, recall that $|\mathbb{H}_t|=O(\frac nt)$,
thus $G^\star-K$ contains a disjoint union of $H_1$ and
$|H_1|$ copies of $K_{t}$. We denote it by $\mathbb{F}$. Then
$e(\mathbb{F})={|H_1|\choose 2}+|H_1|{t\choose 2}.$
Now let $\mathbb{F}'=|H_1|H_2$. Clearly, $|\mathbb{F}|=|\mathbb{F}'|$
and $e(\mathbb{F}')=|H_1|\left({t\choose 2}+\beta-1\right)$.
By Lemma \ref{lem2.4}, $e(\mathbb{F}')\leq e(\mathbb{F})$.
It follows that (\ref{a4.3}) holds.
However, (\ref{a4.2}) and (\ref{a4.3}) contradict each other.
\end{proof}

\begin{thm}\label{thm4.2}
Let $G^\star,s,t,n,p,q,\beta$ be defined as in Theorem \ref{thm3}.
If $\beta\geq2$, then
$$ G^\star\cong \left\{
\begin{aligned}
   &K_{s-1}\nabla \left((p-1)K_t \cup S^1\left(\overline{H_{s,t}}\right)\right) &&\hbox{if $q=\beta=2$}; \\
   &K_{s-1}\nabla \left((p-q)K_t \cup q\overline{H_{s,t}}\right) &&\hbox{if $q\leq 2(\beta-1)$ except $q=\beta=2$}; \\
   &K_{s-1}\nabla \left(pK_t \cup K_q\right) &&\hbox{if $q>2(\beta-1)$}.
\end{aligned}
\right.
$$
\end{thm}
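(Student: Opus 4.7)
The plan is to combine the structural results from Section 3 with Lemmas \ref{lem4.1}--\ref{lem4.3} to reduce $G^\star-K$ to a short list of admissible configurations, eliminate non-extremal ones via Lemma \ref{lem2.4} (local edge maximality), and break the one remaining tie via Lemma \ref{lem2.8} (local degree-sequence majorization).

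First, I would classify the components of $G^\star-K$. Since $\beta\geq 2$, Theorems \ref{thm3.1}, \ref{thm3.2} and \ref{thm3.3}, together with Lemma \ref{lem2.4}, force each component to be one of $K_{q'}$ with $q'\leq t$, $\overline{H_{s,t}}$ (if it has $t+1$ vertices), or $S^1(\overline{H_{s,t}})$ (if it has $t+2$ vertices, which also forces $\beta=2$). Lemma \ref{lem4.3} ensures that a component in $\mathbb{H}_{<t}\cup\mathbb{H}_{t+2}$ forbids any $\overline{H_{s,t}}$ component; Lemma \ref{lem4.1} bounds the total number of such small or $(t+2)$-vertex components by one; and Lemma \ref{lem4.2} gives $|\mathbb{H}_{t+1}|\leq 2\beta-2$. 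Combined with $|G^\star-K|=pt+q$ and the elementary bound $2\beta-2<t$, these constraints force $G^\star-K$ to be exactly one of
\begin{align*}
\textrm{(A)}\ pK_t\cup K_q;\qquad \textrm{(B)}\ (p-q)K_t\cup q\overline{H_{s,t}}\ (1\leq q\leq 2\beta-2);\qquad \textrm{(C)}\ (p-1)K_t\cup S^1(\overline{H_{s,t}})\ (q=\beta=2).
\end{align*}
A quick edge count gives $e_A=p\binom{t}{2}+\binom{q}{2}$, $e_B=p\binom{t}{2}+q(\beta-1)$ and $e_C=p\binom{t}{2}+\beta$, so $e_B-e_A=\tfrac{1}{2}q(2\beta-1-q)$ is positive precisely for $1\leq q\leq 2\beta-2$, and $e_B=e_C$ whenever $q=\beta=2$.

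Next, I would resolve each case. When $q>2(\beta-1)$, only (A) is admissible, so $G^\star\cong K_{s-1}\nabla(pK_t\cup K_q)$. When $1\leq q\leq 2(\beta-1)$ and $(q,\beta)\neq(2,2)$, option (C) is not admissible; applying Lemma \ref{lem2.4} to the disjoint union $H$ of $q$ copies of $K_t$ together with the $K_q$ inside $G^\star-K$, against $H'=q\overline{H_{s,t}}$, rules out (A) since $e(H')-e(H)=\tfrac{1}{2}q(2\beta-1-q)>0$; hence $G^\star-K\cong\text{(B)}$. The subtle case is $q=\beta=2$, where (A), (B), (C) are all admissible with $e_B=e_C>e_A$. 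Option (A) is eliminated by Lemma \ref{lem2.4} applied with $H=K_t\cup K_t\cup K_2$ and $H'=K_t\cup S^1(\overline{H_{s,t}})$; to eliminate (B) I would invoke Lemma \ref{lem2.8} with $H=2\overline{H_{s,t}}$ and the same $H'$. Using that $\beta=2$ forces $t\geq 2s+1$, the two degree sequences become
\begin{align*}
\pi(H)=\bigl((t-1)^{2t-2},\,t-s,\,t-s,\,s+1,\,s+1\bigr),\qquad \pi(H')=\bigl((t-1)^{2t-1},\,t-s,\,s+1,\,2\bigr),
\end{align*}
and a termwise partial-sum comparison shows $\pi(H)\prec\pi(H')$ with $\pi(H)\neq\pi(H')$, contradicting Lemma \ref{lem2.8}; therefore $G^\star\cong K_{s-1}\nabla\bigl((p-1)K_t\cup S^1(\overline{H_{s,t}})\bigr)$.

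The hardest step will be the structural enumeration in the second paragraph: one must carefully track the combined constraints from Lemmas \ref{lem4.1}--\ref{lem4.3} and the modular identity $at+b(t+1)+|H_0|=pt+q$ to confirm that no configuration outside (A)/(B)/(C) can survive. Once the candidate list is pinned down, the subsequent case analysis is essentially mechanical, and the elegant degree-sequence argument for $q=\beta=2$ replaces what would otherwise be a delicate spectral comparison, since (B) and (C) have identical edge counts and differ only in how the ``defect'' is concentrated.
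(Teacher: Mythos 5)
Your proposal is correct and follows essentially the same approach as the paper: both arguments reduce $G^\star-K$ to the same candidate list using Lemmas~\ref{lem4.1}--\ref{lem4.3} and Theorems~\ref{thm3.1}--\ref{thm3.3}, eliminate non-extremal candidates by the edge-count comparisons of Lemma~\ref{lem2.4}, and break the $q=\beta=2$ tie with the degree-sequence majorization of Lemma~\ref{lem2.8}. The only real difference is organizational: you enumerate the three candidates up front with explicit edge counts and then write out the degree sequences $\pi(2\overline{H_{s,t}})$ and $\pi(K_t\cup S^1(\overline{H_{s,t}}))$ in full to verify the majorization, whereas the paper folds the enumeration into a case split on $q$ (first establishing $\mathbb{H}_{<t}=\varnothing$ for $q\le 2(\beta-1)$ via inequality~(\ref{a4.3})) and dispatches the final majorization with ``we can easily see.'' Your explicit computation is a worthwhile addition, and the classification of components of order $\le t$ as cliques (which you attribute a little loosely to Theorems~\ref{thm3.1}--\ref{thm3.3}; it really follows directly from Lemma~\ref{lem2.4} applied with $H'=K_{|H|}$) is correct.
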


\begin{proof}
Recall that $|G^\star-K|=pt+q$, where $1\leq q\leq t$,
and $H\cong \overline{H_{s,t}}$ for any $H\in \mathbb{H}_{t+1}$.
Moreover, we assert that if $q\leq 2(\beta-1)$, then $\mathbb{H}_{<t}=\varnothing$.
Indeed, if $G^\star-K$ contains a component $H_1$ with $|H_1|<t,$
then $\mathbb{H}_{>t}=\varnothing$ and $|\mathbb{H}_{<t}|=1$ by Lemmas \ref{lem4.1} and \ref{lem4.3}.
This implies that $|H_1|=q$.
Now by (\ref{a4.3}), $|H_1|(\beta-1)\leq{|H_1|\choose 2}$, that is, $|H_1|\geq 2(\beta-1)+1,$
contradicts $q\leq 2(\beta-1)$.

Now we distinguish two cases.
We first assume that $q\neq2$.
Then by Lemmas \ref{lem4.1} and \ref{lem4.3}, $G^\star-K$ is isomorphic to either $pK_t\cup K_q$
or $(p-q)K_t\cup q\overline{H_{s,t}}.$
If $q\leq 2(\beta-1)$ $(<t)$, then $\mathbb{H}_{<t}=\varnothing$, and thus $G^\star-K\cong (p-q)K_t \cup q\overline{H_{s,t}}.$
If $q>2(\beta-1)$. then by Lemma \ref{lem4.2}, we have $G^\star-K\cong pK_t\cup K_q$.

Now assume that $q=2$. Since $\beta\geq2$, we have $q\leq 2(\beta-1)$ and thus $\mathbb{H}_{<t}=\varnothing$.
If $\beta>2$, then $\mathbb{H}_{t+2}=\varnothing$ by Theorem \ref{thm3.2}.
Thus, $G^\star-K\cong(p-q)K_t\cup q\overline{H_{s,t}}.$
It remains the case $q=\beta=2$.
Now if $\mathbb{H}_{t+1}\neq\varnothing$, then $\mathbb{H}_{t+2}=\varnothing$ by Lemma \ref{lem4.3}.
This implies that $G^\star-K\cong(p-2)K_t\cup\overline{H_{s,t}}\cup\overline{H_{s,t}}.$
Recall that $e\left(\overline{H_{s,t}}\right)={t\choose 2}+\beta-1={t\choose 2}+1.$
Now let $H'=K_t\cup S^1\left(\overline{H_{s,t}}\right).$
Then $|H'|=2|\overline{H_{s,t}}|=2t+2$ and
$$e(H')=e(K_t)+e\left(\overline{H_{s,t}}\right)+1=2e\left(\overline{H_{s,t}}\right).$$
Since $S^1\left(\overline{H_{s,t}}\right)$ is a subdivision of $\overline{H_{s,t}}$,
we can easily see that $\pi\left(\overline{H_{s,t}}\cup\overline{H_{s,t}}\right)\prec \pi(H'),$
a contradiction. Thus, $\mathbb{H}_{t+1}=\varnothing$. It follows that $|\mathbb{H}_{t+2}|=1.$
From Theorem \ref{thm3.2}, we have $G^\star-K\cong(p-1)K_t\cup S^1\left(\overline{H_{s,t}}\right).$
This completes the proof.
\end{proof}

Combining with Theorems \ref{thm4.1} and \ref{thm4.2}, we completes the proof of Theorem \ref{thm3}.

\section{Proof of Theorems \ref{thm4} and \ref{thm5}}

~~~~Throughout this section,
let $G^*$ be the extremal graph with the maximum spectral radius
over all $n$-vertex connected $K_{1,t}$-minor free graphs, $\rho=\rho(G^*)$
and $X=(x_1,x_2,\ldots,x_n)^T$ be the Perron vector of $G^*$.
Furthermore, assume that $u^*\in V(G^*)$ with $x_{u^*}=\max_{u\in V(G^*)}x_u$,
and let $A=N_{G^*}(u^*),$ $B=V(G^*)\setminus(A\cup \{u^*\})$, $N^2(u^*)=\{d_{G^*}(w,u^*)=2|w\in B\}.$
Above all, we need a lemma.

\begin{lem}\cite{WXH}\label{le000}
Let $G$ be a connected graph and $X=(x_1,x_2,\ldots,x_n)^T$ be its Perron vector.
If $x_u\geq x_v$ and $N_G(v)\setminus (N_G(u)\cup\{u\})=S\neq \varnothing$ for some $u,v\in V(G)$,
then $\rho\left(G-\{vw|w\in S\}+\{uw|w\in S\}\right)>\rho(G)$.
\end{lem}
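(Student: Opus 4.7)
The plan is to run a Rayleigh-quotient argument on the edge-switch $G' := G-\{vw : w\in S\}+\{uw : w\in S\}$, using the Perron vector $X$ of $G$ as a test vector. The key identity is
$$X^T(A(G')-A(G))X = 2(x_u - x_v)\sum_{w\in S} x_w,$$
which follows because exactly the edges $\{uw:w\in S\}$ are added and exactly the edges $\{vw:w\in S\}$ are removed. Note that $u\neq v$ (otherwise $S=\varnothing$), that $u\notin S$ by the very definition $S=N_G(v)\setminus(N_G(u)\cup\{u\})$, and that no added edge $uw$ is already present in $G$, so the added and removed edge sets are disjoint and the bookkeeping above is exact. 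Since $G$ is connected, the Perron--Frobenius theorem gives $X>0$ entrywise and simplicity of $\rho(G)$, so $\sum_{w\in S} x_w>0$ whenever $S\neq\varnothing$. Combined with $x_u\geq x_v$, the right-hand side is nonnegative, hence by the variational characterization of the largest eigenvalue, $\rho(G')\geq X^T A(G') X / (X^T X) \geq \rho(G)$.

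To upgrade this to a strict inequality, I would split into two cases. If $x_u>x_v$, then $X^T(A(G')-A(G))X>0$, and the Rayleigh bound gives $\rho(G')>\rho(G)$ immediately. The delicate case is $x_u=x_v$, where the quadratic form vanishes; here one argues by contradiction. Assuming $\rho(G')=\rho(G)$, the test vector $X$ attains the maximum of the Rayleigh quotient for $A(G')$, so $X$ must be an eigenvector of $A(G')$ for $\rho(G)$. Subtracting the two eigen-equations $A(G')X=\rho(G)X$ and $A(G)X=\rho(G)X$ and reading off the $u$-coordinate yields
$$0 = \bigl((A(G')-A(G))X\bigr)_u = \sum_{w\in S} x_w,$$
since no edge incident to $u$ is removed and the edges $uw$ ($w\in S$) are newly added; this contradicts $X>0$ and $S\neq\varnothing$.

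The main obstacle is precisely the equality case $x_u=x_v$, where the quadratic form alone gives no information and one must exploit both the positivity of the Perron vector and the fact that the $u$-coordinate of $(A(G')-A(G))X$ is \emph{purely} a sum of additions with no cancellation. This in turn relies on the combinatorial subtlety that $u\notin S$ and $u$ is incident to none of the removed edges $vw$; both facts are guaranteed by the hypothesis $S=N_G(v)\setminus(N_G(u)\cup\{u\})$. Once this bookkeeping is verified, the rest is a standard Rayleigh-quotient equality analysis.
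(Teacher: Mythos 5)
Your proof is correct. The paper itself does not prove Lemma 2.6 but cites it from Wu, Xiao and Hong \cite{WXH}; the argument you give is the standard Rayleigh-quotient proof of this kind of edge-shifting lemma and is essentially the one in the literature. You correctly verify the bookkeeping facts that make the quadratic-form identity exact: $u\neq v$ (else $S=\varnothing$), $u\notin S$ (forced by $u\notin\{u\}$), $v\notin S$ (no loops), the added and removed edge sets are disjoint, and no added edge $uw$ was already present since $S\cap N_G(u)=\varnothing$. This gives $X^T(A(G')-A(G))X=2(x_u-x_v)\sum_{w\in S}x_w\geq 0$, hence $\rho(G')\geq\rho(G)$. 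Your treatment of the equality case $x_u=x_v$ is also sound: if $\rho(G')=\rho(G)$ then $X$ would be an eigenvector of $A(G')$ (this follows from the spectral decomposition of any symmetric matrix, so it does not require $G'$ to be connected), and comparing the $u$-coordinate of the two eigen-equations yields $0=\sum_{w\in S}x_w>0$, a contradiction; reading off the $v$-coordinate instead gives $0=-\sum_{w\in S}x_w<0$, an equally valid shortcut. Nothing is missing.
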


If $n\le t$, it is clear that $G^*\cong K_n$.
Moreover, there exists no connected $K_{1,2}$-minor free graph of order $n\geq3$.
Therefore, we need only consider connected $K_{1,t}$-minor free graphs for $t\geq3$ and $n\geq t+1$.

\begin{lem}\label{le001}
$\rho>t-2$ and $|A|=t-1$.
\end{lem}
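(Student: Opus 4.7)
The plan is to split the proof into two steps: first establish the spectral lower bound $\rho > t-2$ by exhibiting a concrete competitor graph, and then deduce $|A|=t-1$ from the Perron eigenequation at $u^*$ together with the structural degree constraint.

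For the lower bound, I would take as competitor the graph $G_0 = S^{n-t}(K_t)$, obtained from $K_t$ by subdividing one edge $n-t$ times (so $G_0$ has exactly $n$ vertices; when $n=t+1$ this is just $S^1(K_t)$). The graph $G_0$ is clearly connected, and its maximum degree is $t-1$, so it has no $K_{1,t}$ subgraph. To see that it is also $K_{1,t}$-minor free, note that any contraction of an edge on the subdivided path only partially restores the original edge $uv$ of $K_t$, while any contraction of an edge inside the surviving $K_t$-part produces a vertex whose degree in the resulting graph is at most $(t-1)+(t-1)-2-(t-2)=t-2$; in either case no vertex of degree $\geq t$ is created. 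Hence $G_0$ is a valid competitor. Since deleting either endpoint of the subdivided edge leaves $K_{t-1}$ as an induced subgraph of $G_0$, and $G_0$ is connected with strictly more edges than $K_{t-1}$, the strict Perron--Frobenius principle gives $\rho(G_0) > \rho(K_{t-1}) = t-2$. By the extremality of $G^*$, we conclude
\[ \rho \;=\; \rho(G^*) \;\geq\; \rho(G_0) \;>\; t-2. \]

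For the degree count, applying the Perron eigenequation at $u^*$, and using $x_v \leq x_{u^*}$ for every $v$, gives
\[ \rho\, x_{u^*} \;=\; \sum_{v \in A} x_v \;\leq\; |A|\, x_{u^*}, \]
so $|A| \geq \rho > t-2$; since $|A|$ is an integer, this forces $|A| \geq t-1$. On the other hand, $K_{1,t}$-minor freeness implies $\Delta(G^*) \leq t-1$, so $|A| = d_{G^*}(u^*) \leq t-1$. Combining the two bounds yields $|A| = t-1$.

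The only delicate point is justifying $\rho(G_0) > t-2$, which reduces to verifying that $G_0$ is $K_{1,t}$-minor free (a short degree calculation) and invoking the strict form of Perron--Frobenius on the proper subgraph inclusion $K_{t-1} \subsetneq G_0$ inside the connected graph $G_0$. Once this is in place the remainder is a single application of the eigenequation at the Perron-maximal vertex.
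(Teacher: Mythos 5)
Your proof is correct and establishes exactly the two claims in the lemma, but it follows a genuinely different (and more economical) path than the paper. The paper also uses $S^{n-t}(K_t)$ as the competitor, but then compares it against $K_t - e$ (the graph $K_t$ minus one edge), computes $\rho(K_t-e)$ as the positive root of $x^2-(t-3)x-2(t-2)=0$, and thereby derives the quadratic inequality $\rho^2-(t-3)\rho>2(t-2)$, which it records as equation \eqref{a001}. From that inequality the bound $\rho>t-2$ follows (the paper's phrase ``$\rho>t+2$'' is a typo for $\rho>t-2$). You instead note that $K_{t-1}$ is a proper subgraph of the connected graph $S^{n-t}(K_t)$ and invoke strict Perron--Frobenius to get $\rho(G^*)\geq\rho(S^{n-t}(K_t))>\rho(K_{t-1})=t-2$ directly, with no quadratic to solve. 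Your route is simpler, but it delivers strictly less: the stronger quadratic inequality \eqref{a001} is not a throwaway step in the paper --- it is cited repeatedly in the proofs of Claims~\ref{cl002}, \ref{cl003}, and \ref{cl004} and is essential for the induction in Theorem~\ref{t003}. So if one adopted your argument verbatim, one would still need a second pass to establish \eqref{a001} before proceeding. The degree-count argument for $|A|=t-1$ is identical to the paper's. One small caveat: your justification that $S^{n-t}(K_t)$ is $K_{1,t}$-minor free by checking single edge contractions is not by itself a complete verification of minor-freeness (one must rule out all branch decompositions, not just one-step contractions); a cleaner observation is that any connected subgraph of $S^{n-t}(K_t)$ has at most $t-1$ external neighbours. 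The paper, for what it is worth, simply asserts this minor-freeness as ``clear,'' so you are no worse off on this point.
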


\begin{proof}
Let $K_t-e$ be the graph obtained from $K_t$ by deleting an edge,
and recall that $S^{n-t}(K_t)$ is obtained from $K_t$ by subdividing $n-t$ times of one edge.
Clearly, $S^{n-t}(K_t)$ is $K_{1,t}$-minor free and contains $K_t-e$ as a proper subgraph.
It follows that $\rho(G^*)\geq \rho(S^{n-t}(K_t))>\rho(K_t-e)$.
Let $\rho'=\rho(K_t-e)$ and $Y=(y_1,\ldots,y_1,y_2,y_2)^T$ be the Perron vector of $K_t-e$,
where $y_1$ corresponds to the $t-2$ vertices of degree $t-1$ and $y_2$
corresponds to the two vertices of degree $t-2$.
Then we have
$$\rho' y_1=(t-3)y_1+2y_2,~~~~ \rho' y_2=(t-2)y_1.$$
Solving these two equations, we have $\rho'^2-(t-3)\rho'-2(t-2)=0$.
Since $\rho=\rho(G^*)>\rho(K_t-e)$, we have
\begin{align}\label{a001}
\rho^2-(t-3)\rho>2(t-2).
\end{align}
Clearly, $\rho>t+2$ for $t\geq3$, as claimed.
Furthermore, $\rho x_{u^*}=\sum_{v\in A}x_v\le |A|x_{u^*}.$
It follows that $|A|\ge \rho>t-2$.
On the other hand,
we see that $|A|\leq\Delta(G^*)\le t-1$ for forbidding $K_{1,t}$-minor.
Therefore, $|A|=t-1$.
\end{proof}

By Lemma \ref{le001}, $d_A(v)\leq t-2$ for any $v\in A$.
Let $A_0=\{v\in A|d_A(v)=t-2\}$ and $A_1=A\setminus A_0$.
Clearly, $d_B(v)=0$ for any $v\in A_0$, since $\Delta(G^*)=t-1$.

\begin{lem}\label{le002}

\noindent(i) $d_B(v)\le 1$ for any $v\in A_1$.

\noindent(ii) $d_B(w)\le 2$ for any $w\in B$.
Particularly, $d_B(w)\le 1$ for $w\in N^2(u^*)$.
\end{lem}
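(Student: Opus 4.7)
The plan is to handle both (i) and (ii) by one clean structural device: from the hypothetical excess $B$-degree we contract a well-chosen connected subgraph of $G^*$ containing $u^*$ and verify that the resulting super-vertex has at least $t$ distinct neighbors in the contracted graph. This exhibits a $K_{1,t}$ as a subgraph of the contracted graph and hence as a minor of $G^*$ itself, contradicting $K_{1,t}$-minor freeness. The argument is purely combinatorial: no spectral switching, Perron-vector comparison, or appeal to Lemma \ref{le000} is required.

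For (i), suppose $v\in A_1$ has $d_B(v)\ge 2$ and contract the single edge $u^*v$. Using $N_{G^*}(u^*)=A$ and $N_{G^*}(v)=\{u^*\}\cup N_A(v)\cup N_B(v)$, the neighborhood of the resulting super-vertex equals
$$
(N_{G^*}(u^*)\cup N_{G^*}(v))\setminus\{u^*,v\}=(A\setminus\{v\})\cup N_B(v),
$$
a disjoint union since $A\cap B=\varnothing$. Its size is therefore $(|A|-1)+d_B(v)=(t-2)+d_B(v)\ge t$, forcing a $K_{1,t}$ in the contracted graph and thus a $K_{1,t}$-minor of $G^*$.

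For (ii), let $w\in B$, set $k:=d_{G^*}(w,u^*)\ge 2$, and fix a shortest $u^*$--$w$ path $P\colon u^*v_1w_1\cdots w_{k-1}w$ (one checks $v_1\in A$ and $w_1,\ldots,w_{k-1}\in B$ automatically, since any $A$-vertex on $P$ other than $v_1$ would provide a shorter route from $u^*$). Contract $V(P)$ to a super-vertex $Y$. The vertices of $A\setminus\{v_1\}$ are all adjacent to $u^*\in Y$, while the off-path $B$-neighbors of $w$ are all adjacent to $w\in Y$; these two families are disjoint from each other and from $V(P)$. Shortest-path minimality forbids $ww_j\in E(G^*)$ for any $j<k-1$ (such a chord would yield a strictly shorter $u^*$--$w$ path), so among the $B$-neighbors of $w$ only $w_{k-1}$ can lie on $P$. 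Hence
$$
|N(Y)|\ge(t-2)+d_B(w)\ \text{if}\ k=2,\qquad |N(Y)|\ge(t-2)+(d_B(w)-1)\ \text{if}\ k\ge 3.
$$
When $w\in N^2(u^*)$ we are in the case $k=2$, so $d_B(w)\ge 2$ yields $|N(Y)|\ge t$; when $w\in B\setminus N^2(u^*)$ we are in the case $k\ge 3$, so $d_B(w)\ge 3$ yields $|N(Y)|\ge t$. Either way a $K_{1,t}$-minor appears, the desired contradiction.

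The only step requiring care is the shortest-path observation in (ii) that rules out chords $ww_j$ in the interior of $P$, together with the bookkeeping that separates the $A$- and $B$-contributions to $N(Y)$ so they can be added without overlap; both are routine once set up, so I do not anticipate a serious obstacle.
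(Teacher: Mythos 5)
Your proof is correct and takes essentially the same route as the paper: in (i), contracting $u^*v$ is exactly the paper's double-star observation, and in (ii), contracting a shortest $u^*$--$w$ path and counting the super-vertex's external neighbors (with the chord-free property of shortest paths separating the cases $w\in N^2(u^*)$ and $w\notin N^2(u^*)$) is the paper's argument about the tree $P+vu^*$ with $|A|-1+d_{B\setminus V(P)}(w)$ leaves. The only blemish is a small off-by-one in the labelling of the path vertices in (ii) -- with $k=d_{G^*}(w,u^*)$ the interior $B$-vertices should run $w_1,\ldots,w_{k-2}$ -- but this does not affect the chord-free observation or the final count.
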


\begin{proof}
(i) Suppose that $d_B(v_0)\geq2$ for some $v_0\in A_1$.
Then $G^*$ contains a double star with a non-pendant edge $u^*v_0$ and $|A|-1+d_B(v_0)$ leaves.
Since $|A|-1+d_B(v_0)\geq t$, we find a $K_t$-minor in $G^*$, a contradiction.
So the claim holds.

(ii) For any vertex $w\in B$, we can find a shortest path $P$ from $w$ to vertices in $A$.
Clearly, $d_{B\cap V(P)}(w)\leq1$, and particularly, $d_{B\cap V(P)}(w)=0$ if $w\in N^2(u^*)$.
Let $v\in A$ be the other endpoint of $P$.
Then $G^*$ contains a tree consisting of a path $P+vu^*$ and $|A|-1+d_{B\setminus V(P)}(w)$ leaves.
Since $G^*$ is $K_{1,t}$-minor free, we have $|A|-1+d_{B\setminus V(P)}(w)\leq t-1$.
It follows that $d_B(w)\leq d_{B\cap V(P)}(w)+1$, and thus the claim holds.
\end{proof}

\begin{lem}\label{le003}
Let $v_1\in A_1$ and $w\in B$ such that $v_1w$ is an edge,
and $\overline{N}_A(v_1)$ be the set of non-neighbors of $v_1$ in $A$.
Then $x_w\le \sum_{v\in \overline{N}_A(v_1)}x_v.$
\end{lem}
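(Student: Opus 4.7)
The plan is to reduce the desired inequality to the trivial comparison $x_{u^*}\ge x_{v_1}$ by combining the Perron eigenvector equations at $v_1$ and at $u^*$.

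First I would pin down the neighborhood of $v_1$. By Lemma~\ref{le002}(i), $d_B(v_1)\le 1$, and since $v_1 w\in E(G^*)$ this forces $N_B(v_1)=\{w\}$. Combined with $v_1\in A\subseteq N_{G^*}(u^*)$, this gives
$$N_{G^*}(v_1)=\{u^*\}\cup\bigl(A\setminus(\overline{N}_A(v_1)\cup\{v_1\})\bigr)\cup\{w\}.$$
Expanding the eigenequation $\rho x_{v_1}=\sum_{y\in N(v_1)}x_y$ then produces
$$\rho\, x_{v_1}=x_{u^*}+\sum_{v\in A\setminus\{v_1\}}x_v-\sum_{v\in\overline{N}_A(v_1)}x_v+x_w.$$

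Next I would substitute the eigenequation at $u^*$. Since $N_{G^*}(u^*)=A$, we have $\rho x_{u^*}=\sum_{v\in A}x_v$, i.e.\ $\sum_{v\in A\setminus\{v_1\}}x_v=\rho x_{u^*}-x_{v_1}$. Plugging this in and collecting $x_{v_1}$-terms gives the clean identity
$$(\rho+1)(x_{u^*}-x_{v_1})=\sum_{v\in\overline{N}_A(v_1)}x_v-x_w.$$
Because $x_{u^*}=\max_{u\in V(G^*)}x_u\ge x_{v_1}$, the left-hand side is nonnegative, which yields the conclusion.

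There is no real obstacle: the only structural input is Lemma~\ref{le002}(i), which guarantees that $w$ is the unique $B$-neighbor of $v_1$ so that the sum over $N_B(v_1)$ collapses to the single term $x_w$; everything else is a one-line manipulation of the Perron equations together with the defining property of $u^*$.
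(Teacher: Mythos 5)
Your proof is correct and is essentially the same argument as the paper's: you invoke Lemma~\ref{le002}(i) to see that $w$ is the unique $B$-neighbor of $v_1$, write out the Perron eigenequations at $v_1$ and $u^*$, derive the identity $(\rho+1)(x_{u^*}-x_{v_1})=\sum_{v\in\overline{N}_A(v_1)}x_v-x_w$, and then conclude from $x_{u^*}\ge x_{v_1}$. The paper arrives at the same identity by computing $\rho(x_{u^*}-x_{v_1})$ directly; the difference is purely presentational.
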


\begin{proof}
By Lemma \ref{le002}, $w$ is the unique neighbor of $v_1$ in $B$.
Thus we have
$$\rho x_{u^*}=\sum\limits_{v\in A\setminus \{v_1\}}x_v+x_{v_1},
~~~\rho x_{v_1}=\sum\limits_{v\in N_A(v_1)}x_v+x_{w}+x_{u^*}.$$
It follows that
$\rho (x_{u^*}-x_{v_1})=(x_{v_1}-x_{u^*})-x_w+\sum\limits_{v\in \overline{N}_A(v_1)}x_v.$
Therefore,
$$ x_w= (\rho+1)(x_{v_1}-x_{u^*})+\sum\limits_{v\in \overline{N}_A(v_1)}x_v \le \sum\limits_{v\in \overline{N}_A(v_1)}x_v,$$
since $x_{v_1}\le x_{u^*}$.
\end{proof}

\begin{thm}\label{t002}
If $n=t+1$, then $\overline{G^*}\cong\frac n2K_{1,1}$ for even $n$,
and $\overline{G^*}\cong K_{1,2}\cup\frac {n-3}2K_{1,1}$ for odd $n$.
In other words, $\overline{G^*}\cong H_{1,t}$.
\end{thm}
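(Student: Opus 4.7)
The plan is first to translate the $K_{1,t}$-minor condition into a degree condition. When $n=t+1$ every branch set of a $K_{1,t}$-minor must be a single vertex, so $G^{*}$ is $K_{1,t}$-minor-free iff $\Delta(G^{*})\leq t-1=n-2$, iff $\delta(\overline{G^{*}})\geq 1$.

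Next I would prove that $\overline{G^{*}}$ is a star forest by edge-maximality. If some $uv\in E(\overline{G^{*}})$ had $d_{\overline{G^{*}}}(u),d_{\overline{G^{*}}}(v)\geq 2$, then $d_{G^{*}}(u),d_{G^{*}}(v)\leq n-3$, so $G^{*}+uv$ would remain connected and $K_{1,t}$-minor-free; monotonicity of $\rho$ under edge addition would give $\rho(G^{*}+uv)>\rho(G^{*})$, contradicting extremality. Hence every edge of $\overline{G^{*}}$ has a degree-one endpoint in $\overline{G^{*}}$, and combined with $\delta(\overline{G^{*}})\geq 1$ this forces $\overline{G^{*}}$ to be a disjoint union of stars, each of order at least $2$.

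To identify the specific star forest I would split on the parity of $n$. For even $n$, the cocktail party graph $K_{n}-\tfrac{n}{2}K_{1,1}$ is $(n-2)$-regular with $\rho=n-2$; any other star-forest $\overline{G^{*}}$ with $\delta\geq 1$ contains a $K_{1,k}$ with $k\geq 2$, so $G^{*}$ has a vertex of degree $\leq n-3$ and is connected but not regular, and Perron--Frobenius gives $\rho(G^{*})<\Delta(G^{*})\leq n-2$, a contradiction. Hence $\overline{G^{*}}\cong\tfrac{n}{2}K_{1,1}=H_{1,t}$.

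For odd $n$ the clean regularity argument fails, and this is the main obstacle: $K_{n}-H_{1,t}$ is no longer regular because no $1$-regular graph on an odd number of vertices exists. Here I would use the structural setup $|A|=t-1$, $B=\{w\}$ from Lemma~\ref{le001} together with Perron symmetry within each star component of $\overline{G^{*}}$: if $\overline{G^{*}}=\bigcup_{i}K_{1,k_{i}}$, then the leaves of the $i$-th star share a common Perron value $y_{i}$, the centre has Perron value $z_{i}=((\rho+1-k_{i})/\rho)\,y_{i}$, and summing the Perron equations produces the characteristic identity
\[
\sum_{i}\frac{(k_{i}+1)\rho+1-k_{i}}{(\rho+1)^{2}-k_{i}}=1
\]
subject to $\sum_{i}(k_{i}+1)=n$ and $k_{i}\geq 1$. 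A convexity/exchange analysis of this identity (using that the summand is convex and increasing in $k$ and strictly decreasing in $\rho$) shows that $\rho$ is strictly maximized by the unique multiset $(2,1,1,\ldots,1)$, yielding $\overline{G^{*}}\cong K_{1,2}\cup\tfrac{n-3}{2}K_{1,1}=H_{1,t}$. The delicate point is that the naive Rayleigh comparison $\rho(G^{\prime})-\rho(G^{*})\geq X^{T}(A(G^{\prime})-A(G^{*}))X$ can have the wrong sign for the natural "star-splitting" and "star-merging" swaps, so one must argue through the characteristic equation (rather than through a single local edge-swap) to rule out configurations like $K_{1,k}$ with $k\geq 3$ or multiple $K_{1,2}$ components.
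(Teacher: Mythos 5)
Your reduction is correct up to the star-forest structure, and is arguably cleaner than the paper's route: for $n=t+1$ a $K_{1,t}$-minor is the same as a $K_{1,t}$-subgraph, so the condition is exactly $\Delta(G^*)\le n-2$, i.e.\ $\delta(\overline{G^*})\ge 1$; and if an edge $uv$ of $\overline{G^*}$ had both endpoints of $\overline{G^*}$-degree at least $2$, then $G^*+uv$ would be connected with $\Delta\le n-2$ and strictly larger $\rho$, a contradiction, so every edge of $\overline{G^*}$ has a degree-one endpoint and $\overline{G^*}$ is a star forest with all components of order at least $2$. The even case also goes through exactly as you wrote: any non-matching star forest leaves $G^*$ connected, non-regular, with $\Delta(G^*)=n-2$, so $\rho(G^*)<n-2=\rho\bigl(K_n-\tfrac n2K_{1,1}\bigr)$.

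The odd case has a genuine gap. Your characteristic identity $\sum_i\frac{(k_i+1)\rho+1-k_i}{(\rho+1)^2-k_i}=1$ is correct, and each summand is indeed increasing and convex in $k_i$ and decreasing in $\rho$. But these facts do not yield the conclusion you claim, and the proposed split exchange $K_{1,k}\mapsto K_{1,k-2}\cup K_{1,1}$ can \emph{decrease} $\rho$. Take $n=9$. The three candidates $\overline{G^*}=K_{1,6}\cup K_{1,1}$, $K_{1,4}\cup 2K_{1,1}$, $K_{1,2}\cup 3K_{1,1}$ give, from your identity (or from the equitable-partition quotient), the cubics $\lambda^3-5\lambda^2-14\lambda+10$, $\lambda^3-5\lambda^2-14\lambda+12$, $\lambda^3-5\lambda^2-14\lambda+6$, with largest roots $\approx 6.835$, $\approx 6.800$, $\approx 6.903$. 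So the split $K_{1,6}\cup K_{1,1}\to K_{1,4}\cup 2K_{1,1}$ strictly \emph{lowers} $\rho$: the sequence of maxima over a fixed number of parts (i.e.\ over $\ell$) is \emph{not} monotone, and the true extremum $K_{1,2}\cup3K_{1,1}$ is not reached from $K_{1,6}\cup K_{1,1}$ by a $\rho$-increasing chain of your exchanges. The convexity of $f$ in $k$ also cuts the wrong way for your purposes: for a fixed number of parts it favors the most spread-out multiset $(n-2\ell+1,1,\dots,1)$, yet you want to conclude that the most spread-in multiset $(2,1,\dots,1)$ wins globally. The paper avoids this comparison over all star forests entirely: using Lemma~\ref{le000} and Lemma~\ref{le003} it first forces $d_A(v)=t-3$ for every $v\in A_1$, so that $\overline{G^*}$ lies in the one-parameter family $K_{1,|A_0|+1}\cup\frac{|A_1|}{2}K_{1,1}$; for this family the characteristic equation~(\ref{a002}) depends on the parameter only through the constant term $|A_0||A_1|$, which is trivially minimized at $|A_0|=1$. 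In your setup you would either need an analogous structural reduction showing at most one star of $\overline{G^*}$ has $\ge 2$ leaves, or a direct cross-$\ell$ comparison at a single reference $\rho$, and neither step is carried out.
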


\begin{proof}
Recall that $|A|=t-1$. Then $|B|=1$. Say $B=\{w\}$, then $N_{G^*}(w)\subseteq A_1$.
We can see that $N_{G^*}(w)=A_1$. Indeed,
if there exists a vertex $v\in A_1$ with $vw\notin E(G^*)$,
then $\max\{d_{G^*}(v),d_{G^*}(w)\}\leq t-2$.
So, $\Delta(G^*+vw)=\Delta(G^*)=t-1$, and thus, $G^*+vw$ also contains no $K_{1,t}$.
However, $\rho(G^*+vw)>\rho (G^*)$, a contradiction.

Recall that $A_1=\{v\in A |d_A(v)\leq t-3\}$.
We now claim that $d_A(v)=t-3$ for any $v\in A_1$.
Suppose that $d_A(v_0)\le t-4$ for some $v_0\in A_1$ and let $v_1\in \overline{N}_A(v_0)$.
Then $d_A(v_1)\leq t-3$, and thus $v_1\in A_1=N_{G^*}(w)$.
Furthermore, $d_A(v_1)=t-3$ (Otherwise,
$\Delta(G^*+v_0v_1)=\Delta(G^*)$ and $\rho(G^*+v_0v_1)>\rho(G^*)$).
By Corollary $\ref{le004}$, we have $x_{w}\le x_{v_0}$.
Now let $G'=G^*-\{wv_1\}+\{v_0v_1\}$.
Then $\Delta(G')=\Delta(G^*)$, and since $v_0\in A_1=N_{G^*}(w)$, $G'$ is still a connected graph.
However, by Lemma \ref{le000}, we get $\rho(G')>\rho(G^*)$, a contradiction.
So, $d_A(v)=t-3$ for any $v\in A_1$.
Note that each vertex in $A_0$ dominates $A$.
The above claim implies that
$G^*[A_1]$ is isomorphic to $K_{|A_1|}$ by deleting a perfect matching.
Hence, $|A_1|$ is even.

It is known that $\rho(G)\leq \Delta(G)$ for any connected graph $G$,
with equality if and only if $G$ is $\Delta$-regular.
Now, $G^*$ is $t-1$-regular if and only if $A_1=A$.
It follows that if $t+1$ is even, then $A_1=A$, and thus $\overline{G^*}$ is
the union of $\frac{t+1}2$ independent edges, that is, $u^*w$ and others within $A$.

If $t+1$ is odd, then $|A|=t-1$ is also odd, and since $|A_1|$ is even, we have $|A_1|<|A|$
(see Fig. \ref{fig001}).
By symmetry, $x_v=x_{u^*}$ for any $v\in A_0$,
and we may let $x_v=x_1$ for any $v\in A_1$.
Thus, $\rho x_w=|A_1|x_1,$ $\rho x_{u^*}=|A_1|x_1+|A_0|x_{u^*}$
and $$\rho x_1=(|A_1|-2)x_1+(|A_0|+1)x_{u^*}+x_w.$$
Note that $|A_0|+|A_1|=|A|=t-1$. Solving above equations, we get
\begin{align}\label{a002}
\rho^3-(t-3)\rho^2-(2t-2)\rho+|A_0||A_1|=0.
\end{align}
Since $|A_1|$ is even and $t+1$ is odd, we have $2\le |A_1|\le t-2$ and $t\geq4$.
Thus, $$|A_0||A_1|\ge \min\{2(t-3), t-2\}=t-2,$$
with $|A_0||A_1|=t-2$ if and only if $|A_1|=t-2$.
Since $\rho$ is the maximum spectral radius,
by (\ref{a002}) we can see that $|A_0||A_1|$ must attain its minimum.
Therefore, $|A_1|=t-2$ and thus $|A_0|=1$.
Now, $\overline{G^*}$ is the union of $\frac{n-3}2$ independent edges with all endpoints in $A_1$,
and a path $u^*wv$ with $v\in A_0$.
This completes the proof.
\end{proof}

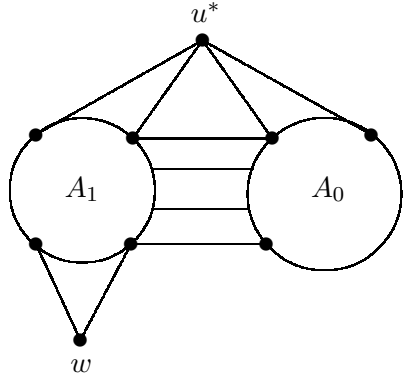
\begin{figure}[htp]
 \setlength{\unitlength}{1pt}
\begin{center}
\begin{picture}(146.5,95.9)
\put(72.5,83.4){\circle*{5}} \put(98.6,46.4){\circle*{5}}
\qbezier(72.5,83.4)(85.6,64.9)(98.6,46.4)
\put(10.2,47.9){\circle*{5}}
\qbezier(72.5,83.4)(41.3,65.6)(10.2,47.9)
\put(135.6,47.9){\circle*{5}}
\qbezier(72.5,83.4)(104.0,65.6)(135.6,47.9)
\put(26.8,-29.7){\circle*{5}} \put(10.2,6.5){\circle*{5}}
\qbezier(26.8,-29.7)(18.5,-11.6)(10.2,6.5)
\put(45.7,6.5){\circle*{5}}
\qbezier(26.8,-29.7)(36.3,-11.6)(45.7,6.5)
\put(46.4,46.4){\circle*{5}}
\qbezier(98.6,46.4)(72.5,46.4)(46.4,46.4)
\qbezier(72.5,83.4)(59.5,64.9)(46.4,46.4)
\put(21.0,31.9){\makebox(0,0)[tl]{$A_{1}$}}
\put(113.1,31.9){\makebox(0,0)[tl]{$A_{0}$}}
\put(68.2,98){\makebox(0,0)[tl]{$u^*$}}
\put(23.2,-37.0){\makebox(0,0)[tl]{$w$}}
\qbezier(54.7,26.8)(54.7,15.6)(46.8,7.6)
\qbezier(46.8,7.6)(38.8,-0.4)(27.6,-0.4)
\qbezier(27.6,-0.4)(16.3,-0.4)(8.3,7.6)
\qbezier(8.3,7.6)(0.4,15.6)(0.4,26.8)\qbezier(0.4,26.8)(0.4,38.1)(8.3,46.0)
\qbezier(8.3,46.0)(16.3,54.0)(27.5,54.0)\qbezier(27.5,54.0)(38.8,54.0)(46.8,46.0)
\qbezier(46.8,46.0)(54.7,38.1)(54.7,26.8)
\qbezier(146.9,25.4)(146.9,13.5)(138.5,5.1)\qbezier(138.5,5.1)(130.1,-3.4)(118.2,-3.4)
\qbezier(118.2,-3.4)(106.3,-3.4)(97.9,5.1)\qbezier(97.9,5.1)(89.4,13.5)(89.4,25.4)
\qbezier(89.4,25.4)(89.4,37.3)(97.9,45.7)\qbezier(97.9,45.7)(106.3,54.1)(118.2,54.1)
\qbezier(118.2,54.1)(130.1,54.1)(138.5,45.7)\qbezier(138.5,45.7)(146.9,37.3)(146.9,25.4)
\put(96.4,6.5){\circle*{5}} \qbezier(45.7,6.5)(71.1,6.5)(96.4,6.5)
\qbezier(53.7,34.8)(72.5,34.8)(91.4,34.8)
\qbezier(54.4,19.6)(72.1,19.6)(89.9,19.6)
\end{picture}
\end{center}\vspace{1.2cm}
\caption{The extremal graph $G^*$ for odd $n=t+1$.}\label{fig001}
\end{figure}

Now we consider the case $n\geq t+2$.
Recall that $S^{k}(G)$ denotes a graph obtained from $G$ by subdividing $k$ times of one edge.

\begin{thm}\label{t003}
If $n\ge t+2$, then $G^*\cong S^{n-t}(K_t)$.
\end{thm}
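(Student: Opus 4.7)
The plan combines Lemmas \ref{le001}--\ref{le003} with the shifting principle of Lemma \ref{le000} to pin $G^*$ into $S^{n-t}(K_t)$. From the established partition $V(G^*)=\{u^*\}\sqcup A\sqcup B$ with $|A|=t-1$ and $A=A_0\sqcup A_1$, my first step is to show that $G^*[B]$ is a disjoint union of paths: any cycle $C\subseteq G^*[B]$ would force every $w\in V(C)$ to satisfy $d_B(w)=2$, hence by Lemma \ref{le002}(ii) $w\notin N^2(u^*)$ and $N_A(w)=\varnothing$, so $V(C)$ is cut off from $u^*$, contradicting connectivity. Write $G^*[B]=P_1\cup\cdots\cup P_k$.

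Next I would fix $|A_1|$ and the number of path components. Since $A_0$-vertices dominate $A$, every $v\in A_1$ is adjacent to all of $A_0$ and hence satisfies $d_{A_1\setminus\{v\}}(v)\le|A_1|-2$. The case $|A_1|=0$ contradicts the need for $B$ to attach to $A$, and $|A_1|=1$ forces $d_A(v)=t-2$, contradicting $v\in A_1$, so $|A_1|\ge 2$. To pin $|A_1|=2$ and $k=1$ I would apply Lemma \ref{le000}: whenever two non-adjacent $A_1$-vertices $v_i,v_j$ have unequal Perron weights $x_{v_i}>x_{v_j}$ and $v_j$ has a $B$-neighbor $w$ not adjacent to $v_i$, shifting $v_jw$ to $v_iw$ strictly increases $\rho$; if the resulting graph is still $K_{1,t}$-minor free, this contradicts the extremality of $G^*$. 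Iterating such shifts, supplemented by symmetric shifts between the two endpoints of a $B$-path (using the Perron-weight bound of Lemma \ref{le003}), forces exactly two $A_1$-vertices $v_1,v_2$, each attached to a distinct endpoint of the unique $B$-path.

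The conclusion follows: $G^*[A\cup\{u^*\}]\cong K_t-v_1v_2$ and $G^*[B]$ is a path of length $n-t-1$ whose two ends are joined to $v_1$ and $v_2$ respectively, so $G^*$ is precisely $K_t$ with $v_1v_2$ subdivided $n-t$ times, i.e.\ $G^*\cong S^{n-t}(K_t)$. The main obstacle is certifying that every shift in the second step preserves $K_{1,t}$-minor freeness: the equivalence between having maximum degree at most $t-1$ and being $K_{1,t}$-minor free, which was freely used for $n=t+1$ in the proof of Theorem \ref{t002}, fails here, so each candidate shift must be checked against branch-set minors, and subcases such as two $A_1$-vertices sharing a common $B$-neighbor, an $A_1$-vertex attached to an interior vertex of a $B$-path, or multiple $B$-path components surviving, must be ruled out individually by exhibiting an explicit subdivision of $K_t$ with strictly larger $\rho$.
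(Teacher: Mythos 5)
Your setup matches the paper's: you correctly use the partition $V(G^*)=\{u^*\}\cup A\cup B$, $A=A_0\cup A_1$ with $|A|=t-1$, observe that $G^*[B]$ is a union of paths, establish $|A_1|\ge 2$, and your reconstruction at the end (if $|A_1|=2$ with one $B$-path, then $G^*\cong S^{n-t}(K_t)$) is sound. But the heart of the proof --- forcing $|A_1|=2$ and a single $B$-path, or equivalently ruling out $|A_1|\ge 3$ and excess path components --- is not carried out by the ``iterated shifting'' plan you propose, and you acknowledge the blocking issue yourself: each shift must preserve $K_{1,t}$-minor freeness, which you do not show how to certify. That is not a peripheral detail but the entire difficulty of the theorem, and waving at ``branch-set checks in each subcase'' does not close it.

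The paper's argument is not a pure shifting argument at all. Its main engine is an eigenvector-equation estimate: expanding $\rho^2 x_{u^*}$ over two steps (inequality (\ref{a006})) and comparing against the quadratic bound $\rho^2-(t-3)\rho>2(t-2)$ from Lemma \ref{le001}. These analytic estimates, not shifts, establish the structural facts that pin $G^*$ down: Claim \ref{cl003} ($d_B(v)=1$ for every $v\in A_1$, so $e(A,B)=|A_1|$), Claim \ref{cl004} ($|A_{11}|\le 2$), and in the case $|A_1|\ge 3$ the sharpening $|A_{11}|\le 1$. Only after all of this is a single shift via Lemma \ref{le000} made, and the shifted graph is seen to be a \emph{subgraph} of $S^{n-t}(K_t)$ --- it is this containment, obtained from the already-established structure, that certifies minor-freeness without any case-checking of branch sets. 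Your plan has no analogue of (\ref{a006}) and hence no mechanism to derive $d_B(v)=1$ for all $v\in A_1$, $|A_{11}|\le 2$, or the eventual contradiction for $|A_1|\ge 3$; shifting non-adjacent $A_1$-vertices with unequal Perron weights, iterated, has no termination or invariant argument attached to it and no guaranteed way to land inside the minor-free family. This is a genuine gap in method, not merely in exposition.
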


\begin{proof}
Observe that a graph $G$ is $K_{1,3}$-minor free if and only if $\Delta(G)\leq2$.
Therefore, if $t=3$, then $G^*\cong C_n$, in other words, $G^*\cong S^{n-3}(K_3)$.
Next, let $t\geq4$. The proof is divided into several claims.

\begin{claim}\label{cl001}
$|A_1|\geq|N^2(u^*)|\geq2$.
\end{claim}

\begin{proof}
Since $|A|=t-1$ and $n\geq t+2$,
we have $N^2(u^*)\neq\varnothing$.
We first show $|N^2(u^*)|\geq2$. Suppose to the contrary that $N^2(u^*)=\{w_0\}$.
By Lemma \ref{le002}, we have $d_B(w_0)\le 1$,
and $d_B(w)\le 2$ for each $w\in B$.
Since $G^*$ is connected, $G^*[B]$ is a pendant path $P=w_0w_1\cdots w_{n-t-1}$.
Take an arbitrary vertex $v_0\in N_A(w_0)$.
If $\overline{N}_A(w_0)\subseteq N_{G^*}(v_0)$, then
$|N_{G^*}(v_0)\cup N_{G^*}(w_0)|=|A\cup \{u^*,w_0,w_1\}|=t+2$.
Hence, $G^*$ contains a double star with a non-pendant edge $v_0w_0$ and $t$ leaves,
a contradiction.
Thus, there exists a vertex $v_1\in \overline{N}_A(w_0)\cap \overline{N}_{G^*}(v_0)$.
Then $d_{G^*}(v_1)\leq t-2$.

Now, let $G'=G^*+\{v_1w_{n-t+1}\}$ and $G''$ be the graph obtained from $G'$
by contracting the path $w_0w_1w_2\cdots w_{n-t-1}v_1$ into an edge $w_0v_1$.
Clearly, $|G''|=t+1$ and $\Delta(G'')=\Delta(G')=\Delta(G^*)=t-1$.
Therefore, $G''$ is $K_{1,t}$-minor free, and thus $G'$ is too.
However, $\rho(G')>\rho(G^*)$, a contradiction.
So $|N^2(u^*)|\geq2$.
Furthermore, by Lemma \ref{le002} (i), we have $|A_1|\geq|N^2(u^*)|$.
\end{proof}

Now let $A_{11}=\{v\in A_1|d_A(v)\le t-4\}$.
Since $n\geq t+2$, we have $|B|\geq2$.
So we may assume that $w',w''\in B$ with $x_{w'}=\max\limits_{w\in B}x_w$
and $x_{w''}=\max\limits_{w\in B\setminus\{w'\}}x_w$.

\begin{claim}\label{cl002}
If $d_B(v_0)=0$ for some $v_0\in A_1$,
then $x_{w'}+x_{w''}\le x_{u^*}+\sum\limits_{v\in A_{11}}x_v.$
\end{claim}

\begin{proof}
We first show $w'\in N^2(u^*)$.
Otherwise, $w'\in B\setminus N^2(u^*)$.
By Lemma \ref{le002}, $d_B(w')\le 2$,
and hence, $\rho x_{w'}=\sum_{w\in N_B(w')}x_{w}\le 2x_{w'}$.
It follows that $\rho\leq 2$, which contradicts
Lemma \ref{le001} as $t\geq4$.
Since $w'\in N^2(u^*)$, we have $d_B(w')\leq1$ and
\begin{align}\label{a003}
\rho x_{w'}\le x_{w''}+\sum_{v\in N_A(w')}x_v.
\end{align}
If $w''\in B\setminus N^2(u^*)$,
then $d_B(w'')\leq2$ and hence
$\rho x_{w''}\le x_{w'}+x_{w''}$,
which implies \begin{align}\label{a003'}
x_{w''}\leq \frac{x_{w'}}{\rho-1}.
\end{align}
Combining with (\ref{a003}), we have
\begin{align}\label{a003''}
x_{w'}\leq \frac{\rho-1}{\rho^2-\rho-1}\sum\limits_{v\in N_A(w')}x_v.
\end{align}
Furthermore, by Claim \ref{cl001}, $|N^2(u^*)|\geq2$. Hence,
there exist a vertex $w\in N^2(u^*)\setminus\{w'\}$ and
a vertex $v$ in $N_A(w)\setminus N_A(w')$.
So, $d_A(w')\leq |A\setminus\{v,v_0\}|=t-3$.
Combining it with (\ref{a003'}) and (\ref{a003''}), we have
$$x_{w'}+x_{w''}\le \frac{\rho}{\rho-1}x_{w'}\leq
\frac{\rho}{\rho^2-\rho-1}\sum\limits_{v\in N_A(w')}x_v\leq \frac{(t-3)\rho}{\rho^2-\rho-1}x_{u^*}.$$
Note that (\ref{a001}) implies $\rho^2-\rho-1>(t-3)\rho$.
So, $x_{w'}+x_{w''}<x_{u^*}$, as desired.

Now assume that $w''\in N^2(u^*)$.
By Lemma \ref{le002}, $d_B(w'')\le 1$ and thus
$\rho x_{w''}\le x_{w'}+\sum_{v\in N_A(w'')}x_v$.
Combining with (\ref{a003}), we have
\begin{align}\label{a004}
\rho(x_{w'}+x_{w''})\leq (x_{w'}+x_{w''})+\sum_{v\in N_A(w')\cup N_A(w'')}x_v,
\end{align}
since $N_A(w')\cap N_A(w'')=\varnothing$ by Lemma \ref{le002}.
Moreover, $$|N_A(w')\cup N_A(w'')|\leq |A\setminus\{v_0\}|=t-2.$$
If $|N_A(w')\cup N_A(w'')|\leq t-3$, then by (\ref{a004}),
we have $x_{w'}+x_{w''}\leq \frac {t-3}{\rho-1}x_{u^*}<x_{u^*}$.
If $d_A(v_1)\leq t-4$ for some $v_1\in N_A(w')\cup N_A(w'')$,
then $v_1\in A_{11}$ and $$x_{w'}+x_{w''}\leq \frac {t-3}{\rho-1}x_{u^*}+x_{v_1}<x_{u^*}+\sum_{v\in A_{11}}x_v.$$

Next we may assume that $|N_A(w')\cup N_A(w'')|=t-2$
and $d_A(v)=t-3$ for any $v\in N_A(w')\cup N_A(w'')$.
Now $N_A(w')\cup N_A(w'')=A\setminus\{v_0\}$.
Since $v_0\in A_1$, $d_A(v_0)\leq t-3$ and thus $v_0$ has a non-neighbor $v_1\in N_A(w')\cup N_A(w'')$.
Therefore, $d_A(v_1)=t-3$, and $v_1$ dominates $N_A(w')\cup N_A(w'')$.
Let $v_1v_2$ be an edge between $N_A(w')$ and $N_A(w'')$.
Then $v_0$ is also a non-neighbor of $v_2$
(Otherwise, $|N_{G^*}(v_1)\cup N_{G^*}(v_2)|=|A\cup \{u^*,w',w''\}|=t+2$
and we get a $K_{1,t}$-minor).
Now we see that $d_A(v_0)\leq t-4$ and thus $v_0\in A_{11}$.
On the other hand, by Lemma \ref{le003}, we have $x_{w''}\leq x_{v_0}$.
It follows that $x_{w'}+x_{w''}\leq x_{u^*}+\sum_{v\in A_{11}}x_v.$
\end{proof}

\begin{claim}\label{cl003}
$d_B(v)=1$ for each vertex $v\in A_1$, and thus $e(A,B)=|A_1|$.
\end{claim}

\begin{proof}
By Lemma \ref{le002}, $d_B(v)\le 1$
for each vertex $v\in A_1$.
Suppose that $d_B(v_0)=0$ for some $v_0\in A_1$.
We can see that $\rho x_{u^*}=\sum\limits_{v\in A}x_v$ and
\begin{align}\label{a005}
\rho^2 x_{u^*}=\sum\limits_{v\in A} \sum\limits_{u\in N_{G^*}(v)}x_u
=|A|x_{u^*}+\sum\limits_{v\in A}d_A(v)x_v+\sum\limits_{w\in B}d_A(w)x_w.
\end{align}
From (\ref{a005}) and the definitions of $A_0$ and $A_{11}$, we have
\begin{eqnarray}\label{a006}
(\rho^2-(t-3)\rho)x_{u^*}
&=& \rho^2x_{u^*}-(t-3)\sum\limits_{v\in A}x_v  \nonumber\\
&=& |A|x_{u^*}+\sum\limits_{v\in A}(d_A(v)-t+3)x_v+\sum\limits_{w\in B}d_A(w)x_w \nonumber\\
&\le& |A|x_{u^*}+\sum\limits_{v\in A_{0}}x_v-\sum\limits_{v\in A_{11}}x_v+\sum\limits_{w\in B}d_A(w)x_w\nonumber\\
&\le& (|A|+|A_0|+e(A,B)-2)x_{u^*}+x_{w'}+x_{w''}-\sum\limits_{v\in A_{11}}x_v.
\end{eqnarray}
Combining Claim \ref{cl002} with $(\ref{a006})$, we have
$$(\rho^2-(t-3)\rho)x_{u^*}
\le (|A|+|A_0|+e(A,B)-1)x_{u^*}.
$$
Recall that $d_B(v)=0$ for any $v\in A_0$, and $d_B(v)\leq1$ for any $v\in A_1$. Therefore,
$$e(A,B)=e(A_1,B)=\sum\limits_{v\in A_1\setminus \{v_0\}}d_B(v)\le |A_1\setminus \{v_0\}|=|A_1|-1.$$
It follows that $\rho^2-(t-3)\rho\le 2(|A|-1)=2(t-2),$
which contradicts (\ref{a001}).
So, $d_B(v)=1$ for any $v\in A_1$ and thus $e(A,B)=e(A_1,B)=|A_1|$.
\end{proof}

\begin{claim}\label{cl004}
$|A_{11}|\le 2$.
\end{claim}

\begin{proof}
Suppose to the contrary that $|A_{11}|\ge 3$.
Then $|A\setminus A_{11}|\le t-4$,
and thus $$\rho x_{u^*}=\sum\limits_{v\in A_{11}}x_v
+\sum\limits_{v\in A\setminus A_{11}}x_v\le \sum\limits_{x\in A_{11}}x_v+(t-4)x_{u^*}.$$
Therefore, by Lemma \ref{le001}, we have
\begin{align}\label{a007}
 \sum\limits_{v\in A_{11}}x_v\ge (\rho-t+4)x_{u^*}\ge 2x_{u^*}.
\end{align}
Combining with (\ref{a006}) and (\ref{a007}), we have
\begin{eqnarray*}
  (\rho^2-(t-3)\rho)x_{u^*}
  &\le& |A|x_{u^*}+ \sum\limits_{v\in A_{0}}x_v-\sum\limits_{v\in A_{11}}x_v+\sum\limits_{w\in B}d_A(w)x_w        \\
  &\le&   (|A|+|A_0|-2)x_{u^*}+e(A,B)x_{u^*}.
\end{eqnarray*}
By Claim \ref{cl003}, we have $e(A,B)=|A_1|$.
Hence, $$\rho^2-(t-3)\rho\le (|A|+|A_0|+|A_1|-2)=2|A|-2=2(t-2),$$
which contradicts (\ref{a001}).
Thus, $|A_{11}|\le 2$, as claimed.
\end{proof}

\begin{claim}\label{cl005}
If $|A_1|\ge 3$, then $A_{10}$ is a clique, where $A_{10}=\{v\in A|d_A(v)=t-3\}$.
\end{claim}

\begin{proof}
Recall that $A_{11}=\{v\in A|d_A(v)\leq t-4\}$. Then $A_{10}=A_1\setminus A_{11}.$
Suppose to the contrary that there exist two non-adjacent vertices $v_1,v_2\in A_{10}$.
Since $d_A(v_1)=d_A(v_2)=t-3$, both $v_1$ and $v_2$ dominate $A\setminus \{v_1,v_2\}$.
Since $|A_1|\ge 3$, we can take a vertex $v_3\in A_1\setminus \{v_1,v_2\}$.
Furthermore, by Claim \ref{cl003}, there exists a unique vertex $w_i\in N_B(v_i)$ for $i\in\{1,2,3\}$.
If $w_3\not=w_1$, then $$|N_{G^*}(v_1)\cup N_{G^*}(v_3)|\ge |A\cup \{u^*,w_1,w_3\}|=t+2.$$
If $w_3\not=w_2$, then we similarly have $|N_{G^*}(v_2)\cup N_{G^*}(v_3)|\geq t+2.$
If $w_1=w_2=w_3$, then by Claim \ref{cl001}, $|N^2(u^*)|\ge 2$
and thus there exist a vertex $w_4\in N^2(u^*)\setminus \{w_1\}$ and a vertex $v_4\in N_A(w_4)$.
It follows that $$|N_{G^*}(v_1)\cup N_{G^*}(v_4)|\ge |A\cup \{u^*,w_1,w_4\}|=t+2.$$
Thus we always find a $K_{1,t}$-minor, a contradiction.
\end{proof}

Now we are ready to give the final proof of the theorem.
By Claim \ref{cl001}, $|A_1|\geq|N^2(u^*)|\geq2$.
If $|A_1|= 2$, then $|N^2(u^*)|=2$, and $G^*[A\cup\{u^*\}]\cong K_t-e$,
where the unique non-edge lies in $A_1$.
Now we can see that $d_A(w)=1$ and $d_B(w)\leq1$ for each $w\in N^2(u^*)$.
Moreover, $d_B(w)\leq 2$ for any $w\in B\setminus N^2(u^*)$.
Since $G^*$ is an extremal graph, we can observe that
$G^*[A_1\cup B]$ is a path with both endpoints in $A_1$.
This implies that $G^*\cong S^{n-t}(K_t)$, as desired.

It remains the case $|A_1|\ge 3$. We shall prove that this is impossible.
By Claim \ref{cl004}, $|A_{11}|\le 2$. Suppose that $|A_{11}|=|\{v_1,v_2\}|=2$.
For $i\in\{1,2\}$, since $d_A(v_i)\le t-4$, $v_i$ has at least one non-neighbor $v'_i\in A_{10}$.
By the definition of $A_{10}$, $d_A(v'_1)=d_A(v'_2)=t-3$, and hence $v'_1\not=v'_2$.
By Claim \ref{cl003}, there exists a unique vertex $w_i\in N_B(v'_i)$ for $i\in\{1,2\}$ (Possibly, $w_1=w_2$).
And by Lemma \ref{le003}, $x_{w_i}\le x_{v_i}$ for $i\in\{1,2\}$.
Thus, $x_{w_1}+x_{w_2}\le x_{v_1}+x_{v_2}=\sum_{v\in A_{11}}x_v$.
Recall that $e(A,B)=|A_1|$. Then by $(\ref{a006})$,
\begin{eqnarray*}
(\rho^2-(t-3)\rho)x_{u^*}
&\le & (|A|+|A_0|+e(A,B)-2)x_{u^*}+x_{w_1}+x_{w_2}-\sum\limits_{v\in A_{11}}x_v\nonumber\\
&\leq& (|A|+|A_0|+|A_1|-2)x_{u^*} \nonumber\\
&=& 2(t-2))x_{u^*},
\end{eqnarray*}
a contradiction with (\ref{a001}). Thus $|A_{11}|\le 1$.

According to Claim \ref{cl005}, $A_{10}$ is a clique.
Recall that $A_{10}=\{v\in A|d_A(v)=t-3\}$.
Therefore, each vertex $v\in A_{10}$ has a non-neighbor $v_0\in A_{11}$.
And since $|A_{11}|\leq1$, we have that $A_{11}=\{v_0\}$ and $e(\{v_0\},A_{10})=0$.
Let $A_{10}=\{v_1,v_2,\ldots,v_{|A_{10}|}\}$.
By Claim \ref{cl003}, there exists a unique vertex $w_i\in N_B(v_i)$ for each $v_i\in A_1$
(Possibly, $w_i=w_j$ for some $i\neq j$).
Since $|N^2(u^*)|\geq2$ and $G^*[B]$ consists of disjoint paths,
we may assume without loss of generality that

\vspace{0.1cm}
(i) $w_{|A_{10}|}\not=w_0$;

\vspace{0.1cm}
(ii) if $|N^2(u^*)|\geq3$, then $w_0$ and $w_{|A_{10}|}$ belong to two distinct paths of $G^*[B]$.

\vspace{0.1cm}
\noindent Note that $|A_1|\ge 3$, then $|A_{10}|=|A_1|-|A_{11}|\ge 2$.
Let $$G'=G^*-\{v_iw_i|i=1,2,\ldots,|A_{10}|-1\}+\{v_iv_0|i=1,2,\ldots,|A_{10}|-1\}.$$
By Lemma \ref{le003}, $x_{w_i}\le x_{v_0}$ for each $w_i\in A_{10}$.
Furthermore, by Lemma \ref{le000} we have $\rho(G')>\rho(G^*)$.
We can observe that $G'[A\cup\{u^*\}]\cong K_t-e$, where the unique non-edge is $v_0v_{|A_{10}|}$.
Moreover, $G'[B]$ is still the disjoint union of paths,
and the only edges between $A$ and $B$ are $v_0w_0$ and $v_{|A_{10}|}w_{|A_{10}|}$.
These observations imply that $G'$ is a subgraph of $S^{n-t}(K_t)$.
It follows that $\rho(S^{n-t}(K_t))\ge \rho(G')>\rho(G^*)$, a contradiction.
This completes the proof.
\end{proof}

Recall that $H_{1,t}$ is a star forest of order $t+1$, precisely,
the disjoint union of $\lfloor\frac{t+1}{2}\rfloor$ stars
in which all but at most one are isomorphic to $K_{1,1}$.
By Theorems \ref{t002} and \ref{t003}, we complete the proof of Theorem \ref{thm4}.
Furthermore, we know that for any connected graph $G$, $\rho(G)\leq \Delta(G)$,
with equality if and only if $G$ is a $\Delta$-regular graph.
One can see that Theorem \ref{thm5} is a direct corollary of Theorem \ref{thm4}.

\end{document}